\theoremstyle{plain}
\newtheorem{theorem}{Theorem}[section]
\newtheorem{lemma}[theorem]{Lemma}
\newtheorem{proposition}[theorem]{Proposition}
\newtheorem{conjecture}[theorem]{Conjecture}
\newtheorem{problem}[theorem]{Problem}
\newtheorem{claim}{Claim}[section]
\newtheorem*{remark}{Remark}
\numberwithin{equation}{section}
\newcommand{\affl}[3]{\noindent #1, Email: {\tt #2}\\ \textsc{#3}\\[1.5pt]}
\DeclareMathOperator{\spec}{spec}
\title{Convex combination of first and second eigenvalues of trees}
\author{Hitesh Kumar \and Bojan Mohar \and Shivaramakrishna Pragada \and Hanmeng Zhan}
\date{}
\begin{document}
\maketitle
\begin{abstract}
For a graph $G$, let $\lambda_1(G)$ and $\lambda_2(G)$ denote the largest and the second largest adjacency eigenvalue of $G$. The sum $\lambda_1(G) + \lambda_2(G)$ is called the \emph{spectral sum} of $G$. We investigate the spectral sum of trees of order $n$ and determine the extremal trees that attain the maximum/minimum. Moreover, for any $\alpha \in [0,1]$, we describe the extremal trees which maximize the convex combination $\alpha \lambda_1 + (1-\alpha)\lambda_2$ in the class of $n$-vertex trees for sufficiently large $n$.
\end{abstract}

\noindent
\textbf{Keywords:} adjacency matrix, spectral sum, spectral gap, spectral center, double comets

\noindent
\textbf{MSC:} 05C50, 05C05

\section{Introduction}

We use standard graph theory notation and terminology. Throughout, assume that $n\ge 5$ unless stated otherwise. Let $G = (V(G), E(G))$ be a finite simple graph of order $n$. If two vertices $u, v\in V(G)$ are adjacent in $G$, we write $u\sim v$, else we write $u\nsim v$. The \textit{adjacency matrix} of $G$ is an $n\times n$ matrix $A(G) = [a_{uv}]$, where $a_{uv} = 1$ if $u\sim v$ and $a_{uv} = 0$, otherwise. The \emph{eigenvalues} of $G$ are the eigenvalues of $A(G)$. Since $A(G)$ is a real symmetric matrix, all eigenvalues of $A(G)$ are real and can be listed as  
\[\lambda_1(G) \geq \lambda_2(G) \geq \cdots \geq \lambda_n(G).\]
We denote the multiset $\{\lambda_1(G), \ldots, \lambda_n(G)\}$ by $\spec(G)$ and call it the \emph{spectrum} of $G$. Denote the class of trees of order $n$ by $\mathcal{T}(n)$. For $k_1,  k_2\geq 0$ and 
$\ell\geq 1,$ denote by $DC(k_1,k_2,\ell)$ the tree on $n=k_1+k_2+\ell$ vertices obtained by attaching $k_1$ and $k_2$ leaves to the two terminal vertices of a path on $\ell$ vertices, see Figure \ref{fig:double_comet}. We call such a tree a \emph{double comet}. We denote by $K_{1,n-1}$ and $P_n$ the star and the path on $n$ vertices. We denote the \emph{complement} of $G$ by $\overline{G}$.

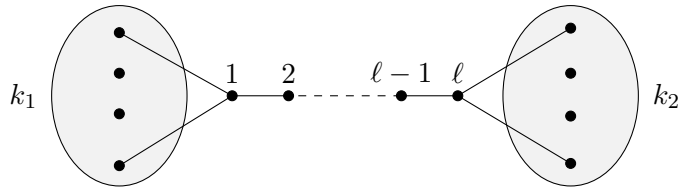
\begin{figure}
    \centering
		\begin{tikzpicture}[scale=1.5]
					\draw [rotate around={90:(2,0)}, fill=gray!10] (2,0) ellipse (0.8cm and 0.6cm);
					\draw [rotate around={90:(-2,0)}, fill=gray!10] (-2,0) ellipse (0.8cm and 0.6cm);
					\draw[dashed]  (0.5,0) -- (-0.5,0);
					\draw  (-1,0)-- (-2,-0.62);
					\draw  (-1,0)-- (-2,0.56);
					\draw  (0.5,0)-- (1,0);
                    \draw  (-0.5,0)-- (-1,0);
					\draw  (1,0)-- (2,0.6);
					\draw  (1,0)-- (2,-0.6);
					
					\draw [fill=black]
                    (0.5,0) circle (1.25pt)
				    (-0.5,0) circle (1.25pt)
				    (-1,0) circle (1.25pt)
				    (-2,-0.62) circle (1.25pt)
				    (-2,0.56) circle (1.25pt)
				    (1,0) circle (1.25pt)
				    (2,0.6) circle (1.25pt)
				    (2,-0.6) circle (1.25pt)
                    (-2,0.2) circle (1.25pt)
					(-2,-0.16) circle (1.25pt)
				    (2,0.2) circle (1.25pt)
				    (2,-0.18) circle (1.25pt);
                    \draw (2.85,0) node {$k_2$}
                    (-2.85,0) node {$k_1$}
                    (-1,0.2) node {$1$}
                    (-0.5,0.2) node {$2$}
                    (0.5,0.2) node {$\ell - 1$}
                    (1,0.2) node {$\ell$};
				\end{tikzpicture}
    \caption{$DC(k_1, k_2, \ell)$}
    \label{fig:double_comet}
\end{figure}

Extremizing graph eigenvalues, particularly $\lambda_1, \lambda_2$ and $\lambda_n$, over various graph families is an important problem in spectral graph theory, see the surveys \cite{Cvetkovic_Rowlinson_1990, Cvetkovic_Simic_1995, Stanic_book}. Investigation of combinations of eigenvalues such as Nordhaus-Gaddum type sums $\lambda_i(G) + \lambda_i(\overline{G})$ \cite{Nikiforov_2007, Aouchiche_Hansen_2013},  $\lambda_1 - \lambda_2$ (\emph{spectral gap} \cite{Stanic_book, Stanic_2013}), $\lambda_1+\lambda_2$ (which we call \emph{spectral sum}) \cite{Ebrahimi_Mohar_Nikiforov_Ahmady_2008}, $\lambda_1 - \lambda_n$ (\emph{spectral spread}) \cite{Breen_Riasanovsky_Tait_Urschel_2022}, and $\lambda_1 + \lambda_n$ (a measure of bipartiteness) \cite{Csikvari_2022}, has also led to interesting results, techniques and applications. 

A classic result of  Lov\'{a}sz and Pelik\'{a}n \cite{Lovasz_Pelikan_1973} concerns extremization of $\lambda_1$ for trees. 

\begin{theorem}[\cite{Lovasz_Pelikan_1973}]\label{thm:lambda_one_max_trees}
    If $T\in \mathcal{T}(n)$ and $T\notin \{K_{n-1, 1}, P_n\}$, then 
    \[ \lambda_1(K_{n-1, 1}) > \lambda_1(T) > \lambda_1(P_n). \]
\end{theorem}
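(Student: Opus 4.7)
The theorem asserts that among $n$-vertex trees, $\lambda_1$ is uniquely maximized by the star $K_{1,n-1}$ and uniquely minimized by the path $P_n$. I would prove the two inequalities separately by iterating tree transformations that are monotone in $\lambda_1$.

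For the upper bound, a direct calculation (e.g.\ from the characteristic polynomial of $A(K_{1,n-1})$) gives $\lambda_1(K_{1,n-1})=\sqrt{n-1}$. Given $T\neq K_{1,n-1}$, take a unit positive Perron eigenvector $x$ of $A(T)$ and a vertex $u$ maximizing $x_u$. Since $T$ is not a star, there is a leaf $w$ whose unique neighbor $v$ satisfies $v\neq u$. Form $T'=T-vw+uw$, still a tree on $n$ vertices. The Rayleigh principle gives
\[
\lambda_1(T')\;\geq\;x^{\top}A(T')x\;=\;\lambda_1(T)+2x_w(x_u-x_v)\;\geq\;\lambda_1(T).
\]
If equality held throughout then $x$ would be a Perron eigenvector of $T'$ as well, and comparing the eigenvalue equations $\lambda_1 x_v=\sum_{y\sim v}x_y$ in $T$ and in $T'$ would force $x_w=0$, contradicting positivity of $x$. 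Hence each move strictly increases $\lambda_1$, and after finitely many steps $T$ has been transformed into $K_{1,n-1}$, which establishes $\lambda_1(T)<\lambda_1(K_{1,n-1})$.

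For the lower bound, the dual strategy is to \emph{uncoil} $T$ toward $P_n$ while strictly decreasing $\lambda_1$. If $T\neq P_n$, let $P=v_0v_1\cdots v_d$ be a longest path in $T$; some internal vertex $v_k$ then has a branch $B$ off $P$. Form $T''$ by detaching $B$ at $v_k$ and reattaching it as a linear extension at the endpoint $v_0$, so that $T''$ has the same number of vertices but a longer longest path. To compare $\lambda_1(T)$ with $\lambda_1(T'')$, I would apply Schwenk's identity
\[
\phi(H,\lambda)=\phi(H-e,\lambda)-\phi(H-\{u,v\},\lambda)
\]
once to the removed edge in $T$ and once to the inserted edge in $T''$, then subtract and evaluate the resulting polynomial at $\lambda=\lambda_1(T'')$, using the positivity of characteristic polynomials of forests above their largest root. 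This forces $\lambda_1(T)>\lambda_1(T'')$, and iteration brings $T$ down to $P_n$.

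The main obstacle is the strict decrease in this second transformation. A direct Rayleigh-quotient argument (as used for the upper bound) fails here because reattaching $B$ onto a vertex of smaller Perron weight can change $x^{\top}A(\cdot)x$ in either direction depending on the internal shape of $B$. The correct certificate is polynomial rather than vectorial, and the technical bulk of the proof lies in verifying the sign of $\phi(T,\lambda)-\phi(T'',\lambda)$ uniformly across all branch shapes and all positions of $v_k$ on $P$; this is where the closed-form structure of the characteristic polynomial of pendant paths enters and makes the comparison tractable.
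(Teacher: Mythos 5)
First, note that the paper does not prove this statement: it is quoted as a classical theorem of Lov\'asz and Pelik\'an, so there is no in-paper proof to compare against. Judged on its own, your argument for the upper bound $\lambda_1(T)<\lambda_1(K_{1,n-1})$ is correct and complete: moving a leaf to the vertex $u$ of maximum Perron entry does not decrease the Rayleigh quotient, your equality analysis (the eigenvalue equation at $v$ forcing $x_w=0$) correctly yields strictness, and since $\lambda_1$ strictly increases at each step while there are only finitely many trees on $n$ vertices, the process terminates; it can only terminate at a tree in which every leaf is adjacent to $u$, which forces a star. This is the standard shifting argument (a special case of the Kelmans operation, Lemma~\ref{lemma:Kelmans}).

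The lower bound, however, has a genuine gap. Everything rests on the claim that detaching an arbitrary branch $B$ from an internal vertex $v_k$ of a longest path and ``reattaching it as a linear extension at the endpoint $v_0$'' strictly decreases $\lambda_1$. You do not prove this: you explicitly defer ``the technical bulk'' of checking the sign of $\phi(T,\lambda)-\phi(T'',\lambda)$ over all branch shapes, and that check is precisely the hard content of the theorem. The operation is also not well defined as stated: if $B$ is not itself a path, producing a ``linear extension'' requires first straightening $B$ into a path, which is an instance of the very inequality being proved (so the step is circular), whereas reattaching $B$ unstraightened need not lengthen the longest path, and a single application of Schwenk's identity to the removed and inserted edges does not yield a difference of characteristic polynomials whose sign is evident for general $B$. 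The standard way to close this gap---and the one matching the toolkit the paper itself assembles---is the Hanging Path Lemma (Lemma~\ref{lemma:hanging_path}, Li--Feng): every tree other than a path has a vertex $v$ of degree at least $3$ at which at least two components of $T-v$ are pendant paths (take a leaf of the minimal subtree spanning all branching vertices), and merging two such pendant paths into one strictly decreases $\lambda_1$ while reducing the number of leaves, so finitely many applications reach $P_n$. You should either substitute this argument for your branch-moving step, or actually carry out the polynomial sign computation in the pendant-path case, where the closed form of $\Phi(P_k,x)$ makes it tractable.
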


The trees with minimum $\lambda_2$ are stars. The trees with maximum $\lambda_2$ were determined by Neumaier \cite{Neumaier_1982} and Hofmeister \cite{Hofmeister_1997}, see also \cite[Section 7]{KMPZ_trees_diameter_2025} 

\begin{theorem}[\cite{Neumaier_1982, Hofmeister_1997}] \label{thm:lambda_two_max_trees}
Let $T$ be a $\lambda_2$-maximizer in $\mathcal{T}(n)$. 
\begin{enumerate}[$(i)$]
    \item If $n$ is odd, then $T\in \left\{DC(\frac{n-3}{2}, \frac{n-3}{2}, 3), DC(\frac{n-3}{2}, \frac{n-5}{2}, 4), DC(\frac{n-5}{2}, \frac{n-5}{2}, 5)\right\}$.
    \item If $n$ is even, then $T\cong DC(\frac{n-4}{2}, \frac{n-4}{2}, 4)$.
\end{enumerate}
\end{theorem}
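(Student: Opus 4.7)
The plan is to combine a structural reduction---showing any $\lambda_2$-maximizer must be a double comet---with a finite optimization over the parameters $k_1,k_2,\ell$. Given a $\lambda_2$-maximizer $T$, I would first analyze the sign pattern of an eigenvector $f$ for $\lambda_2(T)$. Since $f$ is orthogonal to the positive Perron eigenvector for $\lambda_1(T)$, it must change sign, and in a tree the sign change localizes at an edge $e=uv$ (or at a single zero vertex). Removing $e$ produces subtrees $T_1\ni u$ and $T_2\ni v$; pairing $f|_{T_i}$ against the Perron eigenvector of $T_i$ in the identity $A(T_i)(f|_{T_i}) = \lambda_2 f|_{T_i} + c\,\delta_{\text{cut}}$ yields $\lambda_1(T_i)>\lambda_2(T)$, giving a handle on each piece. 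I then intend to apply Kelmans-type or ``grafting'' moves within each $T_i$ that preserve $n$ and weakly increase $\lambda_2$: each step relocates a pendant subtree toward the end of the central path. The terminal configuration is a broom at each end, i.e., $T\cong DC(k_1,k_2,\ell)$ with $k_1+k_2+\ell=n$.

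Next, the natural equitable partition of $DC(k_1,k_2,\ell)$ into the two leaf classes and the $\ell$ path classes produces a quotient matrix whose second-largest eigenvalue equals $\lambda_2(T)$; its characteristic polynomial is an explicit polynomial of degree $\ell+2$ in $\lambda$ depending polynomially on $k_1,k_2$. From this I would establish (i) monotonicity in $\ell$: for fixed $n$, $\lambda_2(DC(k_1,k_2,\ell))$ strictly decreases in $\ell$ once $\ell\ge 6$, paralleling the behaviour of $\lambda_1(P_n)$ behind Theorem~\ref{thm:lambda_one_max_trees}; and (ii) a balancing lemma: for fixed $\ell$, the maximum over $k_1+k_2=n-\ell$ is attained when $k_1,k_2$ are as equal as possible. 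The residual cases $\ell\in\{1,2\}$ can be checked by hand to be non-competitive.

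The parity split in the statement then arises because ``balanced'' means equal halves when $n-\ell$ is even and off-by-one otherwise: for $n$ even only $\ell=4$ admits an equal split, giving the unique maximizer; for $n$ odd, three candidates survive ($\ell=3$ and $\ell=5$ balanced, $\ell=4$ off-by-one). I expect the main obstacle to be verifying that these three candidates in the odd case achieve \emph{equal} $\lambda_2$: this requires an algebraic comparison of three characteristic polynomials and will likely exhibit a shared factor that pins $\lambda_2$ down as a common root of an explicit quadratic or cubic. A secondary difficulty is making the Kelmans-type graft moves rigorous, since $\lambda_2$ is not monotone under arbitrary edge operations---one must track the sign pattern of the $\lambda_2$-eigenvector carefully through each move, or invoke a continuous interpolation argument combined with the fact that $\lambda_2(T)$ is separated from $\lambda_1(T)$ throughout the deformation.
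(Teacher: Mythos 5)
This theorem is quoted by the paper from Neumaier and Hofmeister and is not proved there, so the only internal point of comparison is Proposition \ref{prop:second_maximum_lambda_2_even}, whose proof implements the standard argument for a closely related statement. Your overall skeleton (localize the sign change of a $\lambda_2$-eigenvector at an edge or vertex, get a handle on the two pieces $H_1,H_2$, reduce to double comets, then optimize over $k_1,k_2,\ell$) is the right one, and your closing observation that the three odd-$n$ candidates must be shown to \emph{tie} is accurate: all three have $\lambda_2=\sqrt{(n-3)/2}$, as Corollary \ref{cor:balanced_DC_3} confirms for the $\ell=3$ member.

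The genuine gap is the reduction to double comets. You propose to reach that form by ``Kelmans-type or grafting moves that weakly increase $\lambda_2$,'' and you yourself note that $\lambda_2$ is not monotone under such moves; neither proposed repair (sign tracking, continuous interpolation) is worked out, and a single graft can strictly decrease $\lambda_2$ even while increasing $\lambda_1$, with the eigenvector's sign pattern jumping discontinuously along the deformation. The established route --- the one the paper itself runs in Proposition \ref{prop:second_maximum_lambda_2_even} --- needs no local moves at all: Theorem \ref{thm:spectral_center} gives $\lambda_2(T)\le\lambda_1(H_i)$ for $i=1,2$, hence $\lambda_2(T)\le\lambda_1(K_{1,|H_i|-1})=\sqrt{|H_i|-1}$ by Theorem \ref{thm:lambda_one_max_trees}; since $\min_i|H_i|\le \lceil n/2\rceil$ this is already the sharp bound, and near-equality forces each $H_i$ to be a star of order about $n/2$ (Proposition \ref{prop:second_largest_lambda_one_trees} excludes non-stars). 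Two stars joined through a spectral edge or vertex form a double comet with $\ell\le 5$ automatically, so your monotonicity-in-$\ell$ lemma for $\ell\ge 6$ is never needed. A second, smaller gap: your balancing lemma compares splits only within a fixed $\ell$, yet your even-$n$ conclusion (``only $\ell=4$ admits an equal split, giving the unique maximizer'') silently ranks a balanced $\ell=4$ comet above off-by-one $\ell=3$ and $\ell=5$ comets. That cross-$\ell$ comparison must be done explicitly --- e.g.\ $\lambda_2\bigl(DC(\tfrac{n-4}{2},\tfrac{n-2}{2},3)\bigr)=\sqrt{\tfrac{n-1-\sqrt5}{2}}$ against $\lambda_2\bigl(DC(\tfrac{n-4}{2},\tfrac{n-4}{2},4)\bigr)=\sqrt{\tfrac{n-2}{2}}-O(n^{-1/2})$ --- and is not a consequence of ``balanced beats unbalanced.''
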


If one views stars and paths as extreme cases of double comets, one observes that the extremal trees that extremize $\lambda_1$ or $\lambda_2$ are always double comets. Naturally, one is tempted to conjecture that any meaningful combination of $\lambda_1$ and $\lambda_2$ will be extremized by a double comet in $\mathcal{T}(n)$. Indeed, it was conjectured by Jovovi\'{c}, Koledin and Stani\'{c} \cite{Stanic_2018} that the spectral gap is minimized by a double comet among trees. It is easy to see that $\lambda_1-\lambda_2$ is maximized by a star.

\begin{conjecture}[\cite{Stanic_2018}] \label{conj:spectral_gap_trees}
In $\mathcal{T}(n)$, the spectral gap $\lambda_1-\lambda_2$ is minimized by a double comet $DC(k,k,\ell)$ such that $2k+\ell=n.$
\end{conjecture}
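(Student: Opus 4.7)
The plan is to prove the conjecture in two stages, using classical extremal techniques: first reduce an arbitrary tree to a double comet, then identify the optimal shape among double comets.

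Stage 1 (reduction to double comets). Given $T \in \mathcal{T}(n)$ that is not a double comet, the goal is to exhibit a local modification $T \mapsto T'$ that does not increase $\lambda_1 - \lambda_2$. A natural candidate is a Kelmans-type leaf displacement guided by the Perron eigenvector $\mathbf{x}$ of $\lambda_1(T)$: moving a pendant from a vertex with small $\mathbf{x}$-entry to one with large $\mathbf{x}$-entry pushes $\lambda_1$ upward. The task is to pair this with simultaneous control of $\lambda_2$: one would use an antisymmetric trial vector supported on the two halves of the tree, and show via the Rayleigh quotient that the displacement raises $\lambda_2$ by at least as much as $\lambda_1$. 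Iterating such moves should drive $T$ first to a caterpillar and then to a double comet $DC(k_1, k_2, \ell)$ with $k_1 + k_2 + \ell = n$.

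Stage 2 (balancing the arms and choosing $\ell$). Restricted to double comets, I would exploit the fact that $DC(k, k, \ell)$ admits an involutive automorphism $\sigma$ swapping the two arms. Decomposing $\mathbb{R}^n$ into the $\pm 1$-eigenspaces of $\sigma$, both $\lambda_1$ and $\lambda_2$ become Perron roots of considerably smaller matrices (the symmetric and antisymmetric quotients), whose characteristic polynomials satisfy explicit three-term recursions along the path. Using these, I would compare $\lambda_1 - \lambda_2$ at $(k_1, k_2, \ell)$ with its value at $(k_1 + 1, k_2 - 1, \ell)$ and verify the gap is monotone in $|k_1 - k_2|$, forcing the minimum at $k_1 = k_2$. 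A parallel comparison of $(k, k, \ell)$ with $(k\mp 1, k\mp 1, \ell \pm 2)$ would then locate the optimal $\ell$.

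Main obstacle. Stage 1 is the critical difficulty. In contrast to $\lambda_1$, the behaviour of $\lambda_2$ under a leaf move is not monotone in general: the $\lambda_2$-eigenvector changes sign and its nodal structure can reorganize drastically under a small perturbation, so a move that strictly increases $\lambda_1$ may increase $\lambda_2$ either more or less, giving no a priori control on the gap. Overcoming this would likely require identifying a \emph{spectral center} of the tree (in the sense of \cite{KMPZ_trees_diameter_2025}) and designing the leaf move relative to that center so that the sign pattern of an explicit antisymmetric trial vector is preserved throughout. An alternative route is to use compound moves (for example a Kelmans transformation followed by an edge subdivision) chosen to track both eigenvalues coherently, or a purely variational argument that analyzes critical points of $\lambda_1 - \lambda_2$ over a continuous relaxation of tree parameters.
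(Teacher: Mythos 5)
This statement is an open conjecture of Jovovi\'{c}, Koledin and Stani\'{c}; the paper does not prove it, and in the concluding remarks the authors only say that their techniques ``may be useful'' for it. So there is no proof in the paper to compare against, and your submission should be judged as a standalone argument. As such it is a plan rather than a proof, and the gap is the one you yourself name: Stage 1 is not carried out. A Kelmans-type or leaf-displacement move guided by the Perron vector does strictly increase $\lambda_1$ (Lemma \ref{lemma:Kelmans}), but nothing in your sketch forces $\lambda_2$ to increase by at least as much. An antisymmetric trial vector in the Rayleigh quotient gives only a \emph{lower} bound on $\lambda_2(T')$, while you have no matching \emph{upper} bound on the increase $\lambda_1(T')-\lambda_1(T)$; the two estimates point in directions that do not combine into control of the gap. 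The difficulty is structural, not technical: by Theorem \ref{thm:spectral_center}, $\lambda_2(T)$ equals (or is sandwiched by) the Perron values of two subtrees $H_1,H_2$ determined by the nodal structure of a $\lambda_2$-eigenvector, and a single leaf move can relocate the spectral center and change which subtree governs $\lambda_2$, so no fixed trial vector tracks $\lambda_2$ through the iteration. Until you exhibit a concrete move for which both inequalities are proved simultaneously, the reduction of an arbitrary tree to a double comet is unestablished.

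Stage 2 is the more believable half --- for instance, Proposition \ref{prop:DC_path_3} gives
\[
\lambda_1-\lambda_2=\sqrt{\tfrac{1}{2}\bigl(n-1+\sqrt{(k_1-k_2)^2+4}\bigr)}-\sqrt{\tfrac{1}{2}\bigl(n-1-\sqrt{(k_1-k_2)^2+4}\bigr)}
\]
for $DC(k_1,k_2,3)$, which is visibly increasing in $|k_1-k_2|$, and the symmetric/antisymmetric decomposition you describe is the right tool for general $\ell$ --- but it is contingent on Stage 1 and the cross-$\ell$ comparison is also only asserted, not verified. In short, the proposal identifies the correct obstacles but does not overcome them; the statement remains unproved both in the paper and in your attempt.
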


In this paper, we focus on the spectral sum $\lambda_1+\lambda_2$, and more generally on the convex combination of these two eigenvalues for trees. Let $0\le \alpha \le 1$. For $T\in \mathcal{T}(n)$, define
\[ \Psi(T,\alpha)=\alpha\lambda_1(T)+(1-\alpha)\lambda_2(T).\]
Let 
\[ \Psi_n(\alpha)=\max\{\Psi(T,\alpha): T\in \mathcal{T}(n)\} \quad \text{ and }\quad \hat{\Psi}_n(\alpha)=\frac{\Psi_n(\alpha)}{\sqrt{n-1}}.\]
Then $\hat{\Psi}_n:[0,1]\rightarrow [0,1]$ for all $n$. Our main results are as follows. The cases $\alpha = 0$ and $\alpha =1$ are taken care of by Theorems \ref{thm:lambda_two_max_trees} and \ref{thm:lambda_one_max_trees}, respectively. In the range $ 0< \alpha < \frac{1}{2}$, we establish the following. 

\begin{theorem}\label{thm:alpha_less_than_half}
Fix $0 < \alpha < \frac{1}{2}$ and assume $n$ is sufficiently large (depending on $\alpha$). Let $T^*\in \mathcal{T}(n)$ be such that $\Psi(T^*,\alpha)=\Psi_n(\alpha)$. 
\begin{enumerate}[$(i)$]
    \item If $n$ is odd, then $T^* = DC(\frac{n-3}{2},\frac{n-3}{2}, 3)$.
    \item If $n$ is even, then
    \[T^* =  
    \begin{cases}
        DC(\frac{n-4}{2}, \frac{n-4}{2}, 4) & \text{ if } 0<\alpha < \frac{\sqrt{5}-1}{2\sqrt{5}} \approx 0.276; \\[4pt]
        DC(\frac{n-4}{2}, \frac{n-2}{2}, 3) & \text{ if } \frac{\sqrt{5}-1}{2\sqrt{5}} \le \alpha < \frac{1}{2}. 
    \end{cases}
    \]
\end{enumerate}
\end{theorem}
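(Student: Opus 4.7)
My plan is to first reduce the optimization over $\mathcal{T}(n)$ to a short list of double comets $DC(k_1,k_2,\ell)$, and then compute $\lambda_1,\lambda_2$ for each candidate to identify the winner and the transition value $\alpha^*$. The reduction to double comets is the main technical obstacle; once it is done, the remaining work is a finite-dimensional spectral computation followed by a comparison.

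\textbf{Reduction to double comets.} Starting from an arbitrary $\Psi$-maximizer $T^*$, the first task is to bound $\mathrm{diam}(T^*)$. Combining Theorem~\ref{thm:lambda_two_max_trees} with the diameter-sensitive extremal results for $\lambda_1$ and $\lambda_2$ in \cite{KMPZ_trees_diameter_2025}, I would show that $\mathrm{diam}(T^*)\le 5$ for $n$ large: for diameter $\ge 6$ the best achievable $\lambda_2$ drops below $\lambda_2(DC(\tfrac{n-3}{2},\tfrac{n-3}{2},3))$, and the hypothesis $\alpha\le 1/2$ ensures that the resulting $\lambda_2$-loss outweighs any possible $\lambda_1$-gain. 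Once the diameter is bounded, a Kelmans-type local modification -- moving a pendant from a non-endpoint of the spine to an endpoint -- strictly increases $\lambda_1$ and does not decrease $\lambda_2$ in the small-diameter regime, so iterating forces $T^*$ to be a double comet $DC(k_1,k_2,\ell)$ with $\ell\in\{3,4,5\}$. This reduction is the delicate part: unlike pure $\lambda_1$ or $\lambda_2$ extremization, a combination of the two requires simultaneous control, since the same operation may affect the two eigenvalues in opposite directions.

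\textbf{Spectral analysis of the candidates.} For $\ell=3$, eliminating the pendant-vertex components of an eigenvector with eigenvalue $\lambda\neq 0$ reduces the characteristic equation to $(u-k_1)(u-k_2)=2u-k_1-k_2$ in $u=\lambda^2$, giving
\[
\lambda^2 \;=\; \frac{(k_1+k_2+2)\pm\sqrt{(k_1-k_2)^2+4}}{2}.
\]
In particular, $DC(\tfrac{n-3}{2},\tfrac{n-3}{2},3)$ satisfies $\lambda_1=\sqrt{(n+1)/2}$ and $\lambda_2=\sqrt{(n-3)/2}$, while $DC(\tfrac{n-4}{2},\tfrac{n-2}{2},3)$ satisfies $\lambda^2=(n-1\pm\sqrt{5})/2$. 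For $\ell\in\{4,5\}$ with $k_1=k_2$, the $\mathbb{Z}/2\mathbb{Z}$ symmetry swapping the two sides splits the quotient matrix into symmetric and antisymmetric blocks of size $\le 3$; the corresponding top eigenvalues expand as $\sqrt{(n-2)/2}\pm O(1/n)$, so both $\lambda_1$ and $\lambda_2$ of these comets agree with $\sqrt{(n-2)/2}$ to leading order.

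\textbf{Comparison and threshold.} For odd $n$, the three $\lambda_2$-maximizers in Theorem~\ref{thm:lambda_two_max_trees} share the same $\lambda_2=\sqrt{(n-3)/2}$, and $\lambda_1$ is strictly largest for $DC(\tfrac{n-3}{2},\tfrac{n-3}{2},3)$; hence this tree wins for every $\alpha\in(0,1/2]$, giving part $(i)$. For even $n$, the contest is between $DC(\tfrac{n-4}{2},\tfrac{n-4}{2},4)$ and $DC(\tfrac{n-4}{2},\tfrac{n-2}{2},3)$. Substituting the explicit formulas into $\Psi$ and expanding to order $1/\sqrt{n}$, the first comet contributes only a correction of order $1/n$, while the second contributes $\dfrac{\alpha(\phi^2+1)-1}{2\phi\sqrt{(n-2)/2}}$, where $\phi=(1+\sqrt{5})/2$. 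Equating yields
\[
\alpha^* \;=\; \frac{1}{\phi^2+1} \;=\; \frac{5-\sqrt{5}}{10} \;=\; \frac{\sqrt{5}-1}{2\sqrt{5}},
\]
so that for $\alpha<\alpha^*$ the second correction is negative and dominant and $DC(\tfrac{n-4}{2},\tfrac{n-4}{2},4)$ wins, while for $\alpha>\alpha^*$ the correction is positive and dominant and $DC(\tfrac{n-4}{2},\tfrac{n-2}{2},3)$ wins. Choosing $n$ sufficiently large in terms of $\alpha$ guarantees that the neglected lower-order terms do not reverse the inequality.
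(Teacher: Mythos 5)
Your endgame (the explicit spectra of the $\ell=3$ comets, the $O(1/n)$ cancellation for $DC(\tfrac{n-4}{2},\tfrac{n-4}{2},4)$, and the threshold $\alpha^*=\tfrac{1}{\phi^2+1}=\tfrac{\sqrt5-1}{2\sqrt5}$) matches the paper's computation and is correct. The gap is in your reduction to double comets, which is where you yourself locate the difficulty and which you do not actually resolve. Two specific steps are unsupported. First, you assert that for $\alpha\le\tfrac12$ ``the $\lambda_2$-loss outweighs any possible $\lambda_1$-gain'' when the diameter grows, but you give no mechanism that couples the two eigenvalues: a priori a tree could trade an $O(1/\sqrt n)$ loss in $\lambda_2$ for a much larger gain in $\lambda_1$, and nothing in your argument rules this out. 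Second, you claim that moving a pendant toward an endpoint of the spine ``strictly increases $\lambda_1$ and does not decrease $\lambda_2$ in the small-diameter regime''; Kelmans-type operations are only guaranteed to increase $\lambda_1$ (Lemma \ref{lemma:Kelmans}), and their effect on $\lambda_2$ is exactly the uncontrolled quantity here, so this step needs a proof that you have not sketched.

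The idea you are missing is that for a tree the eigenvalues \emph{are} coupled: the spectrum is symmetric about $0$ and $\sum_i\lambda_i^2=2(n-1)$, so $(\lambda_1^*)^2+(\lambda_2^*)^2\le n-1$, with equality for any $DC(k_1,k_2,3)$. The paper exploits this by setting $f(\lambda,\alpha)=\alpha\sqrt{n-1-\lambda^2}+(1-\alpha)\lambda$ and checking that $f$ is increasing in $\lambda$ on $[0,\sqrt{(n-1)/2}]$ precisely when $\alpha\le\tfrac12$. Since $f(\lambda_2^*,\alpha)\ge\Psi(T^*,\alpha)\ge\Psi(H,\alpha)=f(\lambda_2(H),\alpha)$ for $H=DC(\lfloor\tfrac{n-3}{2}\rfloor,\lceil\tfrac{n-3}{2}\rceil,3)$, monotonicity forces $\lambda_2(T^*)\ge\lambda_2(H)$, i.e.\ $T^*$ is a $\lambda_2$-maximizer (odd $n$, Theorem \ref{thm:lambda_two_max_trees}) or at worst the runner-up (even $n$, Proposition \ref{prop:second_maximum_lambda_2_even}). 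This bypasses any diameter bound or simultaneous-control argument entirely and hands you exactly the short candidate list your final comparison needs. Without this (or an equivalent quantitative coupling of $\lambda_1$ and $\lambda_2$), your reduction does not go through.
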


In the special case $\alpha = \frac{1}{2}$, we characterize the extremal tree for all $n$.

\begin{theorem}\label{thm:tree_max_spectral_sum} Suppose $n\ge 5$.
In the class $\mathcal{T}(n)$, the spectral sum $\lambda_1+\lambda_2$ is maximized by the unique tree $DC(\lfloor\frac{n-3}{2}\rfloor,\lceil\frac{n-3}{2}\rceil, 3)$. 
\end{theorem}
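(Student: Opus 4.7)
The plan is to deduce Theorem \ref{thm:tree_max_spectral_sum} as the specialization of Theorem \ref{thm:alpha_less_than_half} to $\alpha = \frac{1}{2}$, since maximizing $\lambda_1 + \lambda_2$ is equivalent to maximizing $2\,\Psi(T, \frac{1}{2})$. Because $\frac{1}{2} > \frac{\sqrt{5}-1}{2\sqrt{5}}$, Theorem \ref{thm:alpha_less_than_half} already identifies the maximizer for all sufficiently large $n$ as $DC(\frac{n-3}{2}, \frac{n-3}{2}, 3)$ when $n$ is odd and $DC(\frac{n-4}{2}, \frac{n-2}{2}, 3)$ when $n$ is even; both coincide with $DC(\lfloor\frac{n-3}{2}\rfloor, \lceil\frac{n-3}{2}\rceil, 3)$ as claimed.

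First, I would extract from the proof of Theorem \ref{thm:alpha_less_than_half} those structural reductions that are uniform in $n$, in particular the argument that any $\Psi(\cdot, \frac{1}{2})$-maximizer must be a double comet. Such a reduction is typically achieved by local perturbations (moving a leaf between branches, performing a Kelmans-type rearrangement, or contracting an internal path), whose effect on $\lambda_1 + \lambda_2$ can be controlled via Ky Fan inequalities and direct Perron-vector comparisons. At $\alpha = \frac{1}{2}$ these arguments should apply uniformly for every $n \geq 5$.

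Second, I would optimize within the class of double comets $DC(k_1, k_2, \ell)$ with $k_1 + k_2 + \ell = n$. For $\ell = 3$, the equitable quotient yields the closed form
\[
\bigl(\lambda_1(DC(k_1, k_2, 3)) + \lambda_2(DC(k_1, k_2, 3))\bigr)^2 = (n-1) + 2\sqrt{k_1 k_2 + n - 3},
\]
which over $k_1 + k_2 = n - 3$ is uniquely maximized by the balanced split $\{k_1, k_2\} = \{\lfloor\frac{n-3}{2}\rfloor, \lceil\frac{n-3}{2}\rceil\}$. The comparison against the $\ell \geq 4$ families, most delicately against $DC(\frac{n-4}{2}, \frac{n-4}{2}, 4)$ in the even case, is provided by Theorem \ref{thm:alpha_less_than_half}(ii) evaluated at $\alpha = \frac{1}{2}$: since the critical exponent $\frac{\sqrt{5}-1}{2\sqrt{5}} \approx 0.276$ is strictly less than $\frac{1}{2}$, the tree $DC(\frac{n-4}{2}, \frac{n-2}{2}, 3)$ beats its $\ell = 4$ competitor. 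The families with $\ell \in \{1, 2\}$ or $\ell \geq 5$ are ruled out by strictly weaker direct bounds (using the explicit eigenvalue expressions for small-$\ell$ double comets and $\lambda_1 \le 2$ for paths of length at least $5$).

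The main obstacle is bridging the gap between ``$n$ sufficiently large'' in Theorem \ref{thm:alpha_less_than_half} and the ``$n \geq 5$'' hypothesis stated here. I would close this either by verifying the finite list of exceptional small values of $n$ via direct computation from the closed-form eigenvalue formulas for low-order double comets (since by the first step only double-comet candidates remain, the check is short), or by observing that the reductions in the proof of Theorem \ref{thm:alpha_less_than_half}, when specialized to $\alpha = \frac{1}{2}$, carry through without any largeness assumption on $n$.
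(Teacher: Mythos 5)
Your high-level plan is the same as the paper's: Theorem \ref{thm:tree_max_spectral_sum} is obtained by specializing the analysis behind Theorem \ref{thm:alpha_less_than_half} to $\alpha=\tfrac12$, and your closed form $(\lambda_1+\lambda_2)^2=(n-1)+2\sqrt{k_1k_2+n-3}$ for $DC(k_1,k_2,3)$ is correct and does show the balanced split wins within that family. But there is a genuine gap in how you bridge from ``$n$ sufficiently large'' to ``$n\ge 5$''. The decisive comparison in the even case --- $DC(\frac{n-4}{2},\frac{n-2}{2},3)$ versus $DC(\frac{n-4}{2},\frac{n-4}{2},4)$ --- is established in the proof of Theorem \ref{thm:alpha_less_than_half} only by a Taylor expansion with an uncontrolled $O(n^{-3/2})$ error term, so ``evaluating Theorem \ref{thm:alpha_less_than_half}(ii) at $\alpha=\tfrac12$'' gives you nothing for any explicit $n$. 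Your fallback claim that ``the reductions \ldots carry through without any largeness assumption'' is false: Proposition \ref{prop:second_maximum_lambda_2_even} requires $n\ge 12$, and the asymptotic tie-break genuinely needs large $n$ (indeed, for $n\le 15$ the minimizer of the spectral sum is a star, which signals that small-$n$ behaviour is not automatic). The paper closes this gap with an \emph{exact} argument for $n\ge 20$: from $\Phi(T,x)>0$ for $x\ge\sqrt{\tfrac{n-1}{2}}$ one gets $\lambda_1(T)<\sqrt{\tfrac{n-1}{2}}$, interlacing gives $\lambda_2(T)\le\sqrt{\tfrac{n-2}{2}}$, and hence $\lambda_1(T)+\lambda_2(T)<\sqrt{\tfrac{n-1+\sqrt5}{2}}+\sqrt{\tfrac{n-1-\sqrt5}{2}}$; the ranges $n\le 10$ and $10\le n<20$ are then checked by computer. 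You supply neither this exact comparison nor an effective constant for ``sufficiently large'', so your ``finite list of exceptional $n$'' is not actually identified.

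A secondary inaccuracy: your first step attributes the reduction to double comets to local perturbations (leaf moves, Kelmans operations, contractions) whose effect on $\lambda_1+\lambda_2$ is ``controlled via Ky Fan inequalities''. That is not how the paper proceeds for $\alpha\le\tfrac12$, and it is not clear it can be made to work: Lemma \ref{lemma:spectral_sum} only gives a lower bound on the perturbed sum in terms of eigenvector entries, which are unavailable without the structural information you are trying to prove. The paper instead uses $\lambda_1^2+\lambda_2^2\le n-1$ together with monotonicity of $f(\lambda,\alpha)=\alpha\sqrt{n-1-\lambda^2}+(1-\alpha)\lambda$ to force $\lambda_2(T^*)\ge\lambda_2(H)$, and then invokes the classification of $\lambda_2$-maximizers (Theorem \ref{thm:lambda_two_max_trees}) and second maximizers (Proposition \ref{prop:second_maximum_lambda_2_even}) to pin down the candidate list. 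Without that mechanism, your ``only double-comet candidates remain, so the small-$n$ check is short'' does not hold, and the small-$n$ verification must range over all trees, as the paper's does.
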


Next, we consider the range $\frac{1}{2}<\alpha < 1$. 

\begin{theorem}\label{thm:alpha_more_than_half}
Fix $\frac{1}{2}<\alpha <1$ and let $ t=\frac{\alpha^2}{\alpha^2 + (1-\alpha)^2}$. Assume $n$ is sufficiently large (depending on $\alpha$). Let $T^*\in \mathcal{T}(n)$ be such that $\Psi(T^*,\alpha)=\Psi_n(\alpha)$. Then 
  \[ T^*\in \{DC(k, n-2-k, 2): k = \lfloor\kappa_2\rfloor, \ \lceil \kappa_2\rceil \}\cup \{DC(k, n-3-k, 3): k = \lfloor\kappa_3\rfloor, \ \lceil \kappa_3\rceil \}\]
where 
\[ \kappa_2 := \frac{n-2+\sqrt{(2t-1)^2(n-1)^2-2(n-1)+1}}{2}\approx tn\]
and 
\[ \kappa_3 = \frac{n-3+\sqrt{(2t-1)^2(n-1)^2-4}}{2}\approx tn.\]
\end{theorem}

We interpret the above results as follows. In the range $0\le \alpha \le \frac{1}{2}$, $\lambda_2$ dictates the structure of the extremal trees, and hence they have two vertices with degree $\approx \frac{n}{2}$. In the range $\frac{1}{2}<\alpha\le 1$, the extremal tree has two vertices $u_1$ and $u_2$ with $\deg(u_1)\approx tn$ and $\deg(u_2)\approx (1-t)n$ such that $u_1$ and $u_2$ are at distance at most 2. One can view this as a continuous phenomenon. As the value of $\alpha$ increases, one obtains the extremal tree by moving leaves from one end (i.e., $u_2$) to the other end (i.e., $u_1$) of the double comet, and in the limit case (i.e., $\alpha \rightarrow 1$) one ends up getting a star. 

We also investigate the asymptotic behaviour of the normalized function $\hat{\Psi}_n(\alpha)$. This gives an upper/lower bound for the convex combination of $\lambda_1$ and $\lambda_2$ for trees. 

\begin{theorem}\label{thm:asymptotic_convex_combination}
    For any fixed $0\le \alpha \le 1$, we have 
    \[ \lim_{n\rightarrow \infty} \hat{\Psi}_n(\alpha) = 
    \begin{cases}
    \sqrt{\frac{1}{2}} & \text{ if }\alpha \le \frac{1}{2};\\
    \sqrt{\alpha^2 + (1-\alpha)^2} & \text{ if }\alpha > \frac{1}{2}.
    \end{cases}\]
\end{theorem}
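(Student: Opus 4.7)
Theorems \ref{thm:lambda_one_max_trees}, \ref{thm:lambda_two_max_trees}, \ref{thm:alpha_less_than_half} and \ref{thm:alpha_more_than_half} identify the maximizer $T^* \in \mathcal{T}(n)$ explicitly for each fixed $\alpha \in [0,1]$ and $n$ sufficiently large; in every case $T^*$ is a double comet. The plan is thus to compute $\lambda_1(T^*), \lambda_2(T^*)$ asymptotically using the reflective (or near-reflective) symmetry of $T^*$ together with the equitable partition into leaf cells and path-vertex singletons, which reduces the eigenvalue problem to a small quotient matrix whose nonzero spectrum is explicit.

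\textbf{Case $0\le \alpha \le 1/2$.} Here $T^*$ is a (nearly) balanced double comet $DC(k_1,k_2,\ell)$ with $\ell\in\{3,4,5\}$ and $k_1,k_2 = n/2 + O(1)$. For the representative example $DC(k,k,3)$, decomposing into symmetric and antisymmetric eigenvectors under the flip of the two sides yields the closed forms $\lambda_1 = \sqrt{k+2}$ and $\lambda_2 = \sqrt{k}$; analogous equitable-partition computations for $DC(k,k,4)$ and $DC(k,k,5)$ give cubic equations whose two largest roots both behave like $\sqrt{n/2} + o(\sqrt{n})$. In every case
\[
\frac{\Psi(T^*,\alpha)}{\sqrt{n-1}} = \frac{\alpha \lambda_1 + (1-\alpha)\lambda_2}{\sqrt{n-1}} \longrightarrow \frac{\alpha + (1-\alpha)}{\sqrt{2}} = \frac{1}{\sqrt{2}}.
\]

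\textbf{Case $1/2 < \alpha \le 1$.} By Theorem \ref{thm:alpha_more_than_half}, $T^* = DC(k_1,k_2,2)$ is a double star with $k_1+k_2 = n-2$, and $k_i/n$ tends to $\alpha^2/S$ (respectively $(1-\alpha)^2/S$), where $S = \alpha^2+(1-\alpha)^2$. The $4$-cell equitable partition gives a quotient whose nonzero eigenvalues $\lambda$ satisfy $\mu^2 - (n-1)\mu + k_1 k_2 = 0$ with $\mu = \lambda^2$, whence
\[
\lambda_1^2 + \lambda_2^2 = n-1, \qquad \lambda_1^2 \lambda_2^2 = k_1 k_2.
\]
Passing to the limit gives $(\lambda_1^2/n,\lambda_2^2/n) \to (\alpha^2/S,(1-\alpha)^2/S)$ (the ordering is fixed since $\alpha > 1/2$), and therefore
\[
\frac{\Psi(T^*,\alpha)}{\sqrt{n-1}} \longrightarrow \frac{\alpha^2 + (1-\alpha)^2}{\sqrt{S}} = \sqrt{\alpha^2+(1-\alpha)^2}.
\]
The endpoint $\alpha = 1$ is the star $K_{1,n-1}$ with $\Psi_n(1) = \sqrt{n-1}$, giving the limit $1$, consistent with the formula.

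\textbf{Main obstacle.} There is no genuine difficulty once the structural theorems are in hand; the work is an asymptotic analysis of the roots of an explicit low-degree polynomial. The only care required is a continuity check at $\alpha = 1/2$, where both branches must agree and indeed yield $\sqrt{2 \cdot (1/2)^2} = 1/\sqrt{2}$, and a brief verification that all the $\lambda_2$-maximizers listed in Theorem \ref{thm:lambda_two_max_trees} (three for odd $n$) share the same leading asymptotic $\lambda_2 = \sqrt{n/2}+o(\sqrt{n})$, so that the case $\alpha = 0$ is handled uniformly.
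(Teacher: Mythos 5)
Your proposal is correct and follows essentially the same route as the paper: it invokes the structural theorems to identify the extremal double comets and then extracts the asymptotics of $\lambda_1$ and $\lambda_2$ from the explicit quadratic in $\lambda^2$ (your equitable-partition quotient is just another derivation of the characteristic-polynomial factor the paper obtains via Lemma \ref{lemma:char_edge}). The only cosmetic difference is that you handle the endpoints $\alpha=0$ and $\alpha=1$ and the agreement of the two branches at $\alpha=\tfrac12$ explicitly, which the paper leaves implicit.
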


It is only natural to ask which trees have the minimum spectral sum. 
We conclude the paper by answering this. The extremal tree is exactly what one would expect. 

\begin{theorem}\label{thm:spectral_sum_min_trees}
The spectral sum is uniquely minimized by the path $P_n$ among all trees in $\mathcal{T}(n)$ whenever $n\ge 16$.
\end{theorem}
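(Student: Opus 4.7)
My approach is to prove $\lambda_1(T) + \lambda_2(T) > \lambda_1(P_n) + \lambda_2(P_n)$ for every $T \in \mathcal{T}(n) \setminus \{P_n\}$ whenever $n \geq 16$. The first case is the star: when $T = K_{1,n-1}$, $\lambda_1(T) + \lambda_2(T) = \sqrt{n-1}$, while $\lambda_1(P_n) + \lambda_2(P_n) < 4$. Thus $\sqrt{n-1} \geq 4 > \lambda_1(P_n) + \lambda_2(P_n)$ for $n \geq 17$, and for $n = 16$ direct computation yields $\sqrt{15} \approx 3.873 > 2\cos(\pi/17) + 2\cos(2\pi/17) \approx 3.831$. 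The threshold $n \geq 16$ arises precisely here, since for $n \leq 15$ the star already beats the path.

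For $T \notin \{P_n, K_{1,n-1}\}$, let $P^* = u_0 u_1 \cdots u_d$ be a diametral path of $T$ and $W = V(T) \setminus V(P^*)$, so $|W| = n - 1 - d \geq 1$; any $w \in W$ attaches at some $u_j$ with $1 \leq j \leq d - 1$, else $P^*$ could be extended. By Cauchy interlacing, for any induced subgraph $T'$ of $T$,
\[ \lambda_1(T) + \lambda_2(T) \geq \lambda_1(T') + \lambda_2(T'). \]
In the critical case $d = n - 2$ we have $|W| = 1$ and $T' = T$ is the spider $S(j, 1, n - 2 - j)$ obtained from $P_{n-1}$ by attaching a pendant at $u_j$. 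Its top two eigenvalues solve the rank-one secular equation
\[ \mu = \sum_{k=1}^{n-1} \frac{(2/n)\sin^2(jk\pi/n)}{\mu - 2\cos(k\pi/n)}, \]
which allows comparison with $\lambda_1(P_n) + \lambda_2(P_n) = 2\cos(\pi/(n+1)) + 2\cos(2\pi/(n+1))$. The tightest configuration is $j = 1$, yielding $T = D_n = DC(2, 0, n-2)$, for which the closed-form eigenvalues $2\cos((2k-1)\pi/(2(n-1)))$ combine via sum-to-product into $\lambda_1(D_n) + \lambda_2(D_n) = 4\cos(\pi/(n-1))\cos(\pi/(2(n-1)))$. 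A Taylor expansion shows this exceeds $4\cos(3\pi/(2(n+1)))\cos(\pi/(2(n+1))) = \lambda_1(P_n) + \lambda_2(P_n)$ by $\sim 5\pi^2/(2n^2)$ for every $n \geq 6$. The intermediate $j$ produce spiders whose spectral sum strictly exceeds that of $D_n$, as follows from monotonicity in $\sin^2(j\pi/n) + \sin^2(2j\pi/n)$ applied to the secular equation (and can be verified case by case at $n = 16$, e.g., $S(7,1,7)$ has spectral sum $\approx 3.900$).

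When $d \leq n - 3$, so $|W| \geq 2$, either $T' = T$ or one takes $T'$ to be the diametral path together with two off-path vertices, strictly enlarging the lower bound. If $d$ is very small (say $d = O(\log n)$) then by a Moore-type bound $T$ has a vertex of degree $\gtrsim n^{\Omega(1/d)}$, so $\lambda_1(T)$ is large and the inequality is immediate; for moderate $d$ the argument iterates the preceding perturbation analysis on the enlarged $T'$, which is itself a pendant-attached path handled in the previous paragraph.

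The main obstacle is the case $d = n - 2$: the perturbation of $P_n$ to $T$ is tiny, so both $\lambda_1(T) - \lambda_1(P_n)$ and $\lambda_2(P_n) - \lambda_2(T)$ are of order $1/n^2$, and the inequality depends on the exact constants from the secular equation. Moreover, the threshold $n = 16$ is sharp only because $K_{1,n-1}$ (not any near-path tree) is the competing minimizer for $n \leq 15$; the asymptotic inequality $\lambda_1(D_n) + \lambda_2(D_n) > \lambda_1(P_n) + \lambda_2(P_n)$ in fact holds already for all $n \geq 6$, so aside from the star the obstruction disappears well before $n = 16$.
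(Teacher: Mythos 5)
Your overall strategy (diametral path plus interlacing plus a secular-equation perturbation of $P_n$) is genuinely different from the paper's, which instead localizes $\lambda_2$ via the spectral-center decomposition of Theorem~\ref{thm:spectral_center}, forces $H_1,H_2$ to be paths using the Hanging Path Lemma and the recursion of Lemma~\ref{lemma:char_edge}, and disposes of the remaining subdivided $K_{1,3}$ and $DC(2,2,2)$ shapes partly by computer. However, your argument has a gap that is not cosmetic: the entire intermediate range of diameters is unhandled.

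Concretely, when $d\le n-3$ you propose to interlace against an induced subtree $T'$ consisting of the diametral path plus one or two off-path vertices. Interlacing does give $\lambda_1(T)+\lambda_2(T)\ge\lambda_1(T')+\lambda_2(T')$, but you then need $\lambda_1(T')+\lambda_2(T')>\lambda_1(P_n)+\lambda_2(P_n)=4-\Theta(1/n^2)$, and this fails whenever $T'$ is a near-path on $m\ll n$ vertices: for example $\lambda_1(DC(2,2,m-4))=2$ exactly and $\lambda_2<2$, so such a $T'$ has spectral sum strictly below $4$ by an amount of order $1/m^2$, which is far larger than the $1/n^2$ margin you must beat. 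Your Moore-type argument only covers the regime where some vertex has degree at least $16$ (so that $\lambda_1\ge\sqrt{\Delta}\ge 4$); for trees with $\Delta\le 15$ the diameter is only guaranteed to be $\Omega(\log n)$, leaving everything between $c\log n$ and $n-3$ untouched. This is precisely the range where the paper needs its structural machinery (Claim~\ref{claim:H_i_is_a_path} and the subsequent spectral vertex/edge analysis). In addition, even in your core case $d=n-2$ the assertion that the spectral sum of $S(j,1,n-2-j)$ is minimized at $j=1$ is justified only by an unproven ``monotonicity in $\sin^2(j\pi/n)+\sin^2(2j\pi/n)$'' of the two largest roots of the secular equation, plus one numerical spot check; the sum of two roots of that equation is not an obviously monotone functional of the numerators, so this step needs a real argument (the Hanging Path Lemma gives monotonicity of $\lambda_1$ alone, in the direction that makes $j=1$ the $\lambda_1$-minimizer, but $\lambda_2$ moves the other way). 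The closed-form computation for $D_n=DC(2,0,n-2)$ and the comparison with $P_n$ are correct and are a nice self-contained ingredient, but as written the proposal proves the theorem only for stars, for trees with a vertex of very large degree, and (modulo the monotonicity claim) for trees of diameter $n-2$.
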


\begin{remark}
    When $n\le 15$, the spectral sum is minimized by a star in $\mathcal{T}(n)$.
\end{remark} 

The paper is organized as follows. In Section \ref{section:preliminaries}, we recall known results that are required later. We prove Theorems \ref{thm:alpha_less_than_half} and \ref{thm:tree_max_spectral_sum} in Section \ref{section:convex_combination_alpha_less_than_half} and Theorem  \ref{thm:alpha_more_than_half} in Section \ref{section:convex_combination_alpha_more_than_half}. In Section \ref{section:asymptotics_convex_combination}, we prove Theorem \ref{thm:asymptotic_convex_combination}. The proof of Theorem \ref{thm:spectral_sum_min_trees} is given in Section \ref{section:minimum_spectral_sum}. We conclude with some remarks in Section \ref{section:conclusion}.

\section{Preliminaries}
\label{section:preliminaries}

In this section, we introduce additional notation and recall known results that will be required later. 

Let $G$ be a graph and $U\subseteq V(G)$. Then $G[U]$ denotes the subgraph of $G$ induced by $U$. For a vertex $u\in V(G)$, $N(u)$ and $N[u]$ will denote the open and closed neighbourhood of $u$ in $G$, respectively. We denote the maximum degree in $G$ by $\Delta(G)$. The distance between two vertices $u, v\in V(G)$ is denoted by $d(u, v)$. The \emph{characteristic polynomial} of $G$ is denoted by $\Phi(G,x) = \det(xI -A(G))$, where $I$ denotes the identity matrix of order $|V(G)|$.

In the following propositions, we find the characteristic polynomials, $\lambda_1$ and $\lambda_2$, of some special double comets. We require the following lemma.

\begin{lemma}[\cite{CRS_2010}]
\label{lemma:char_edge}
    Let $uv$ be an edge in a tree $T$. Then 
    \[\Phi(T,x)=\Phi(T-uv, x)-\Phi(T-u-v,x).\]
\end{lemma}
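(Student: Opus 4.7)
The plan is to use the matching-polynomial description of the characteristic polynomial of a forest. Namely, by the Sachs--Harary coefficient formula, each coefficient of $\Phi(G,x)$ is a signed count of elementary subgraphs of $G$ (vertex-disjoint unions of edges and cycles), and since a tree has no cycles, one obtains for any forest $F$ on $n$ vertices
\[\Phi(F,x) = \sum_{k\ge 0} (-1)^k m_k(F)\, x^{n-2k},\]
where $m_k(F)$ is the number of matchings of size $k$ in $F$, with the convention $m_0(F)=1$.

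With this reduction in hand, I would condition on whether a matching of $T$ uses the designated edge $uv$. Matchings avoiding $uv$ are precisely matchings of $T-uv$, while matchings containing $uv$ are in bijection, via deletion of $uv$, with matchings of $T-u-v$ of size one smaller. This yields the recurrence
\[m_k(T) = m_k(T-uv) + m_{k-1}(T-u-v)\]
for all $k\ge 0$. Substituting this into the formula for $\Phi(T,x)$, splitting the sum and reindexing the second piece (noting that $T-u-v$ has $n-2$ vertices, which absorbs the two missing powers of $x$ and produces the required sign flip so that $(-1)^k m_{k-1}$ becomes $-(-1)^{k-1}m_{k-1}$), one recovers $\Phi(T,x) = \Phi(T-uv,x) - \Phi(T-u-v,x)$.

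The only real ingredient required is the Sachs coefficient theorem, so no serious obstacle arises. An alternative route is to cite Schwenk's general edge-deletion identity, valid for any graph,
\[\Phi(G,x) = \Phi(G-uv,x) - \Phi(G-u-v,x) - 2\sum_{Z \in \mathcal{C}(uv)} \Phi(G - V(Z),x),\]
where $\mathcal{C}(uv)$ is the set of cycles of $G$ containing $uv$, and observe that the last sum is empty when $G$ is a tree, so the lemma follows at once.
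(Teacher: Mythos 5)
Your argument is correct. Note that the paper does not prove this lemma at all; it is quoted directly from the reference [CRS\_2010], so there is no ``paper proof'' to compare against. Both routes you describe are standard and sound. In the matching-polynomial route, the key identities check out: for a forest $F$ on $m$ vertices one indeed has $\Phi(F,x)=\sum_{k\ge 0}(-1)^k m_k(F)x^{m-2k}$ because the only elementary subgraphs of a forest are disjoint unions of single edges, and the recurrence $m_k(T)=m_k(T-uv)+m_{k-1}(T-u-v)$ is the standard split on whether a matching uses $uv$. The reindexing is exactly as you say: setting $j=k-1$ in the second sum gives $\sum_j(-1)^{j+1}m_j(T-u-v)\,x^{(n-2)-2j}=-\Phi(T-u-v,x)$, since $T-u-v$ has $n-2$ vertices, which produces both the degree shift and the minus sign simultaneously. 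The alternative of invoking Schwenk's edge-deletion formula and observing that a tree has no cycles through $uv$ is equally valid and arguably shorter, though it trades a self-contained combinatorial argument for a heavier citation. Either version would serve as a complete proof of the lemma.
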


\begin{proposition}\label{prop:DC_path_2}
Let $T=DC(k_1, k_2,2)$ such that $k_1+k_2+2=n$. Then
\begin{enumerate}[$(i)$]
    \item $\lambda_1(T)=\sqrt{\frac{1}{2}\big(n-1+\sqrt{(n-1)^2 - 4k_1k_2} \big)}$.
    \item $\lambda_2(T)=\sqrt{\frac{1}{2}\big(n-1-\sqrt{(n-1)^2 - 4k_1k_2} \big)}$.
\end{enumerate}
\end{proposition}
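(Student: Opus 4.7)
The plan is to mimic the proof of Proposition~\ref{prop:DC_path_3}: apply Lemma~\ref{lemma:char_edge} to the central edge of $T=DC(k_1,k_2,2)$, compute the resulting characteristic polynomial explicitly, and then read off the two largest eigenvalues from a biquadratic factor.

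Concretely, let $u$ and $v$ denote the two terminal vertices of the path of length~$2$ in $T$, so that $u$ is adjacent to $k_1$ leaves, $v$ is adjacent to $k_2$ leaves, and $uv$ is an edge. Then $T-uv$ is the disjoint union of two stars $K_{1,k_1}$ and $K_{1,k_2}$, and $T-u-v$ is the empty graph on $k_1+k_2=n-2$ vertices. Using the well-known fact $\Phi(K_{1,k},x)=x^{k-1}(x^2-k)$ together with multiplicativity of $\Phi$ on disjoint unions, I would obtain
\[\Phi(T-uv,x)=x^{n-4}(x^2-k_1)(x^2-k_2),\qquad \Phi(T-u-v,x)=x^{n-2}.\]
Substituting these into Lemma~\ref{lemma:char_edge} and simplifying yields
\[\Phi(T,x)=x^{n-4}\bigl(x^4-(k_1+k_2+1)x^2+k_1k_2\bigr)=x^{n-4}\bigl(x^4-(n-1)x^2+k_1k_2\bigr).\]

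With the characteristic polynomial in hand, the nonzero eigenvalues are precisely the roots of the biquadratic $x^4-(n-1)x^2+k_1k_2=0$. Solving for $x^2$ gives $x^2=\tfrac{1}{2}\bigl(n-1\pm\sqrt{(n-1)^2-4k_1k_2}\bigr)$, and both values are nonnegative since $(n-1)^2-4k_1k_2\ge (k_1-k_2)^2\ge 0$ and $n-1>\sqrt{(n-1)^2-4k_1k_2}$. Taking positive square roots identifies the four nonzero eigenvalues as $\pm\lambda_1$ and $\pm\lambda_2$ with the claimed formulas, and since they are all larger in absolute value than the remaining eigenvalue $0$ (of multiplicity $n-4$), these are indeed the first and second largest eigenvalues of $T$.

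There is no real obstacle here; the only thing to be careful about is the bookkeeping of the exponent of $x$ (the factor $x^{n-4}$), which must match the case in Proposition~\ref{prop:DC_path_3} so that $\lambda_1$ and $\lambda_2$ correspond to the two largest roots of the quartic rather than to zero. Everything else is a direct computation following the template already used above.
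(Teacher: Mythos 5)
Your proposal is correct and follows exactly the paper's route: apply Lemma~\ref{lemma:char_edge} to the central edge to obtain $\Phi(T,x)=x^{n-4}\big(x^4-(n-1)x^2+k_1k_2\big)$ and then read off the two largest roots of the biquadratic factor. The paper's proof is just a terser version of the same computation, so there is nothing to add.
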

\begin{proof}
  Using Lemma \ref{lemma:char_edge}, we get
  \[ \Phi(T,x)=x^{n-4}\big(x^4 - (n-1)x^2 + k_1k_2\big).\] Now, it is easy to see that the two largest roots of the above polynomial are as claimed.
\end{proof}

\begin{proposition}\label{prop:DC_path_3}
Let $T=DC(k_1,k_2,3)$ such that $k_1+k_2+3=n$. Then 
\begin{enumerate}[$(i)$]
    \item $\lambda_1(T)=\sqrt{\frac{1}{2}\big(n-1 + \sqrt{(k_1-k_2)^2+4}\big)}$.
    \item $\lambda_2(T)=\sqrt{\frac{1}{2}\big(n-1 - \sqrt{(k_1-k_2)^2+4}\big)}$.
\end{enumerate}
\end{proposition}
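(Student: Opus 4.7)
The plan is to apply Lemma~\ref{lemma:char_edge} to one of the two middle edges of $DC(k_1,k_2,3)$. Let the central path be $u$--$v$--$w$, where $u$ has $k_1$ pendant neighbours and $w$ has $k_2$ pendant neighbours. Deleting the edge $uv$ splits $T$ into two stars: one is the star $K_{1,k_1}$ with centre $u$, and the other is the star on $k_2+2$ vertices with centre $w$ and leaves being $v$ and the $k_2$ pendants of $w$. Deleting both $u$ and $v$ leaves $k_1$ isolated vertices together with the star $K_{1,k_2}$ centred at $w$. Using the well-known formula $\Phi(K_{1,k},x)=x^{k-1}(x^2-k)$, both $\Phi(T-uv,x)$ and $\Phi(T-u-v,x)$ factor cleanly as a power of $x$ times a polynomial in $x^2$.

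Substituting into Lemma~\ref{lemma:char_edge} gives
\[
\Phi(T,x) = x^{k_1+k_2-1}\bigl[(x^2-k_1)(x^2-k_2-1) - (x^2-k_2)\bigr].
\]
I would then expand the bracket, substitute $n-1=k_1+k_2+2$, and substitute $y=x^2$ to obtain a quadratic
\[
y^2 - (n-1)y + (k_1k_2+k_1+k_2) = 0.
\]
Computing the discriminant reduces $(n-1)^2 - 4(k_1k_2+k_1+k_2)$ to $(k_1-k_2)^2+4$, after which the quadratic formula yields the two values $y_{\pm}=\tfrac{1}{2}\bigl(n-1\pm\sqrt{(k_1-k_2)^2+4}\bigr)$ claimed in the proposition.

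Finally I need to argue that $\sqrt{y_+}$ and $\sqrt{y_-}$ are indeed $\lambda_1(T)$ and $\lambda_2(T)$, not merely two of the eigenvalues. Since $T$ is bipartite, $\spec(T)$ is symmetric about $0$, so the four roots of the quartic $y^2-(n-1)y+(k_1k_2+k_1+k_2)=0$ in $x$ come as $\pm\sqrt{y_+},\pm\sqrt{y_-}$; the remaining $n-4$ eigenvalues are zero, coming from the factor $x^{k_1+k_2-1}$. As both $y_{\pm}>0$ (since $k_1k_2+k_1+k_2>0$ when $n\ge 4$, giving a positive product and positive sum of roots), the four nonzero eigenvalues are real, and $\sqrt{y_+}>\sqrt{y_-}>0$ are precisely the two largest.

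I do not anticipate a serious obstacle here: the only mild calculation is simplifying the discriminant to $(k_1-k_2)^2+4$. One could alternatively derive the same quadratic via the $5\times 5$ equitable partition quotient matrix with classes $\{L_1\},\{u\},\{v\},\{w\},\{L_2\}$ and then argue that the remaining spectrum is $0$ with multiplicity $n-5$ plus the eigenvalue coming from the symmetric partition of the two leaf-classes; the edge-deletion route is cleaner.
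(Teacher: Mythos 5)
Your proposal is correct and follows essentially the same route as the paper: apply Lemma~\ref{lemma:char_edge} to a middle edge of the central path to obtain $\Phi(T,x)=x^{n-4}\big(x^4-(n-1)x^2+k_1k_2+k_1+k_2\big)$, then solve the quadratic in $x^2$. Your computation of the discriminant and your justification that $\sqrt{y_+}$ and $\sqrt{y_-}$ are indeed the two largest eigenvalues are both sound, and merely spell out the details the paper leaves as "easy."
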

\begin{proof}
Using Lemma \ref{lemma:char_edge}, we have 
\[\Phi(T,x) = x^{n-4}\big(x^4-(n-1)x^2 + k_1k_2+k_1+k_2\big). \]
It is now easy to find the two largest roots of the above polynomial.
\end{proof}

We also recall the spectrum of the $n$-vertex path $P_n$. 

\begin{proposition}[\cite{Brouwer_Haemers_book}]\label{prop:path_spectrum}
    The spectrum of the path $P_n$ is given by $\{2\cos(\frac{\pi j}{n+1}): j = 1, \ldots, n\}$.
\end{proposition}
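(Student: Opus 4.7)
The plan is to construct explicit eigenvectors of $A(P_n)$ via a sinusoidal ansatz and then argue by counting that we have produced the entire spectrum.

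First I would label the vertices of $P_n$ as $1,2,\ldots,n$ in order along the path, so that $A(P_n)$ is the tridiagonal $\{0,1\}$-matrix with ones on the sub- and super-diagonals. For a candidate eigenvector $v=(v_1,\ldots,v_n)^\top$, the equation $A(P_n)v=\lambda v$ reads as the three-term recurrence
\[ v_{i-1}+v_{i+1}=\lambda v_i \qquad (1\le i\le n),\]
under the convention $v_0=v_{n+1}=0$, which absorbs the missing neighbours at the two endpoints of the path.

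Next I would try the ansatz $v_i=\sin(i\theta)$. The product-to-sum identity $\sin((i-1)\theta)+\sin((i+1)\theta)=2\cos(\theta)\sin(i\theta)$ makes the recurrence hold identically with $\lambda=2\cos\theta$; the left boundary $v_0=0$ is automatic, and the right boundary $\sin((n+1)\theta)=0$ forces $(n+1)\theta\in \pi\mathbb{Z}$. Setting $\theta_j:=\frac{\pi j}{n+1}$ for $j=1,\ldots,n$ gives $n$ values in $(0,\pi)$, hence $n$ pairwise distinct numbers $\lambda_j=2\cos(\frac{\pi j}{n+1})$ (distinctness follows from the strict monotonicity of $\cos$ on $(0,\pi)$) and $n$ nonzero candidate eigenvectors $v^{(j)}=(\sin(i\theta_j))_{i=1}^n$.

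Finally, I would invoke the fact that $A(P_n)$ is a real symmetric $n\times n$ matrix, hence has exactly $n$ eigenvalues counted with multiplicity; thus the $n$ distinct values $\lambda_j$ exhaust $\spec(P_n)$. There is no substantive obstacle here: the only creative step is selecting the sinusoidal ansatz, which is motivated by the fact that the recurrence $r^2-\lambda r+1=0$ has roots $e^{\pm i\theta}$ whenever $|\lambda|\le 2$, producing oscillatory solutions precisely in the regime where the boundary conditions at $0$ and $n+1$ can simultaneously be satisfied.
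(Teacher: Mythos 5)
Your proof is correct and complete; it is the standard derivation via the sinusoidal ansatz for the three-term recurrence with Dirichlet boundary conditions. The paper offers no proof of this proposition --- it simply cites \cite{Brouwer_Haemers_book} --- so there is nothing to compare against, and your argument would serve as a self-contained justification of the cited fact.
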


The following is a well-known result about symmetric matrices.

\begin{theorem}[Interlacing Theorem \cite{Horn_Johnson_2013}]
	Let $A$ be an $n\times n$ Hermitian matrix with eigenvalues $\lambda_1 \ge \cdots \ge \lambda_n$. Let $B$ be an $(n-1)\times (n-1)$ principal submatrix of $A$ with eigenvalues $\theta_1\ge \cdots \ge \theta_{n-1}$. Then $\lambda_i \ge \theta_i \ge \lambda_{i+1}$ for $1\le i \le n-1$.
\end{theorem}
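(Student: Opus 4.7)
The plan is to prove the Interlacing Theorem via the Courant--Fischer min--max characterization, which for a Hermitian matrix $A$ with eigenvalues $\lambda_1 \ge \cdots \ge \lambda_n$ reads
\[
\lambda_i(A) = \max_{\dim S = i}\,\min_{0 \ne x \in S} \frac{x^* A x}{x^* x} = \min_{\dim S = n-i+1}\,\max_{0 \ne x \in S} \frac{x^* A x}{x^* x},
\]
with the subspaces $S$ ranging over subspaces of $\mathbb{C}^n$. I would assume this characterization as known and build the two interlacing inequalities on top of it.

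By a permutation similarity (which preserves eigenvalues), I may assume the $(n-1) \times (n-1)$ principal submatrix $B$ is obtained by deleting the last row and column of $A$. For any $y \in \mathbb{C}^{n-1}$, let $\tilde{y} \in \mathbb{C}^n$ denote the vector obtained by appending a $0$. Then $\tilde{y}^* A \tilde{y} = y^* B y$ and $\|\tilde{y}\| = \|y\|$, and the map $y \mapsto \tilde{y}$ sends any $d$-dimensional subspace $S \subseteq \mathbb{C}^{n-1}$ injectively onto a $d$-dimensional subspace $\tilde{S} \subseteq \mathbb{C}^n$ with identical Rayleigh quotients. This embedding is the single mechanism behind both inequalities.

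For the upper bound $\theta_i \le \lambda_i$, apply the max--min form: $\theta_i$ is the maximum of $\min_{0 \ne y \in S} (y^* B y)/(y^* y)$ over $i$-dimensional subspaces $S \subseteq \mathbb{C}^{n-1}$, and each such $S$ embeds as an $i$-dimensional subspace $\tilde{S} \subseteq \mathbb{C}^n$ achieving the same inner minimum, so the maximum can only increase when taken over all $i$-dimensional subspaces of $\mathbb{C}^n$, giving $\lambda_i$. For the lower bound $\lambda_{i+1} \le \theta_i$, use the dual min--max form with subspace dimension $n - i$: $\theta_i$ is the minimum of $\max_{0 \ne y \in S}(y^* B y)/(y^* y)$ over $(n-i)$-dimensional subspaces of $\mathbb{C}^{n-1}$, each of which embeds as an $(n-i)$-dimensional subspace of $\mathbb{C}^n$ with the same inner maximum, so the minimum can only decrease when taken over all $(n-i)$-dimensional subspaces of $\mathbb{C}^n$, yielding $\lambda_{i+1}$.

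There is no genuine obstacle here beyond bookkeeping: the only care needed is to pick the correct (max--min versus min--max) variant of Courant--Fischer in each half and to note that embedding a subspace preserves both its dimension and its Rayleigh quotients. Since the result is classical (due to Cauchy), I would simply cite Courant--Fischer rather than reprove it.
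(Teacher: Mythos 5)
Your proof is correct: both halves of the interlacing inequality follow cleanly from the Courant--Fischer characterization together with the observation that zero-padding embeds subspaces of $\mathbb{C}^{n-1}$ into $\mathbb{C}^n$ while preserving dimension and Rayleigh quotients, and you have correctly matched the max--min form to the bound $\theta_i \le \lambda_i$ and the min--max form (with subspace dimension $n-i$, which is $(n-1)-i+1$ for $B$ and $n-(i+1)+1$ for $A$) to the bound $\lambda_{i+1} \le \theta_i$. The paper itself offers no proof of this statement --- it is quoted as a classical result with a citation to Horn and Johnson --- so there is no in-paper argument to compare against; your argument is the standard one found in that reference.
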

 
Let $T$ be a tree. For a vector $y=(y_v)_{v\in V(T)} \in \mathbb{R}^{V(T)}$, define the \emph{support} of $y$ to be the set $S(y)=\{v \in V(T) : y_v\neq 0\}$. The \emph{positive support} of $y$ is $S^+(y) = \{v \in V(T) : y_v> 0\}$, the \emph{negative support} of $y$ is $S^-(y) = \{v \in V(T) : y_v< 0\}$, and the \emph{zero set} of $y$ is $S^0(y) = \{v \in V(T) : y_v= 0\}$. We recall the following result concerning the \emph{spectral center} of a tree.

\begin{theorem}[\cite{KMPZ_trees_diameter_2025}]\label{thm:spectral_center}
    Let $T$ be a tree of order $n\ge 2$. Let $y$ be a $\lambda_2(T)$-eigenvector with minimal support. Then there exist rooted subtrees $(H_1,a)$ and $(H_2,b)$ of $T$ such that $V(H_1) = S^+(y)$ and $V(H_2)=S^-(y)$. Furthermore, exactly one of the following holds. 
    \begin{enumerate}[$(i)$]
        \item If $S^0(y)\neq \emptyset$, then there is a unique vertex $v\in S^0(y)$ such that $(H_1,a)\circ v \circ (H_2,b)$ is a subtree of $T$. Moreover, 
        \[\lambda_1(H_1)=\lambda_1(H_2)=\lambda_1(T-v)=\lambda_2(T).\]  
        The vertex $v$ is called the \emph{spectral vertex}.
        \item If $S^0(y)=\emptyset$, then $T=(H_1,a)\circ (H_2,b)$, 
        \[\lambda_1(H_1-a) < \lambda_2(T) < \lambda_1(H_1) \text{ and } \lambda_1(H_2-b) < \lambda_2(T) < \lambda_1(H_2).\]
        The edge $ab$ is called the \emph{spectral edge}.
    \end{enumerate}
\end{theorem}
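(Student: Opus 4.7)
I would prove the theorem in four steps: (1) show that any $\lambda_2$-eigenvector must contain both signs; (2) show that each sign-support induces a connected subtree; (3) locate the unique transition zone between these two subtrees in $T$; and (4) derive the spectral identities or inequalities in each case.

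Step (1) is immediate: since $T$ is connected, $\lambda_1(T)$ is simple with a strictly positive Perron eigenvector $z$, and as $\lambda_2(T)<\lambda_1(T)$ we have $y\perp z$, forcing $y$ to have both positive and negative coordinates. Step (2) is the main obstacle. Arguing by contradiction, suppose $T[S^+(y)]$ decomposes into components $A_1,\dots,A_k$ with $k\geq 2$; in the tree $T$ these components are separated by vertices in $S^0(y)\cup S^-(y)$. For each $i$ I would construct an auxiliary vector $y^{(i)}$ supported on the ``$A_i$-side'' of an appropriately chosen cut-vertex and satisfying the eigenvalue equation at every zero vertex except one designated boundary location, and then take a nontrivial scalar combination $\sum_i c_i y^{(i)}$ whose boundary defects cancel (a single linear condition on $k\geq 2$ coefficients). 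The combination is a $\lambda_2$-eigenvector with support strictly omitting at least one $A_i$, contradicting the minimality of $S(y)$. The same argument handles $T[S^-(y)]$. Write $H_1=T[S^+(y)]$ and $H_2=T[S^-(y)]$.

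For Step (3), let $a\in V(H_1)$ and $b\in V(H_2)$ be the endpoints of the unique $H_1$--$H_2$ path in $T$; its interior vertices lie in $S^0(y)$. If the path had at least two interior vertices $v_1\sim a$ and $v_2$, then any other neighbour of $v_1$ in $T$ would also lie in $S^0(y)$: a neighbour in $V(H_1)\cup V(H_2)$ together with the connectivity of $H_1$ (or $H_2$) and the bridge would form a cycle in $T$. The eigenvalue equation at $v_1$ would then yield $0 = \lambda_2(T)\,y_{v_1} = \sum_{w\sim v_1} y_w = y_a$, contradicting $a\in S^+(y)$. Hence the path is either the single edge $ab$ (case~(ii)) or contains exactly one interior zero vertex $v$ (case~(i)), identifying the promised subtree $(H_1,a)\circ v\circ (H_2,b)$ or $(H_1,a)\circ(H_2,b)$; a parallel cycle argument also forces $S^0(y)=\emptyset$ in case~(ii).

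In Step (4) in case~(i) the same cycle-free reasoning shows every $u\in V(H_1)$ has all out-of-$V(H_1)$ neighbours in $S^0(y)$, so the $T$-eigenvalue equation restricts to $A(H_1)\,y|_{V(H_1)} = \lambda_2(T)\,y|_{V(H_1)}$. Strict positivity of $y|_{V(H_1)}$ then forces it to be the Perron eigenvector of $H_1$, giving $\lambda_1(H_1) = \lambda_2(T)$; similarly $\lambda_1(H_2)=\lambda_2(T)$. In case~(ii) the restriction becomes $A(H_1)\,y|_{V(H_1)} = \lambda_2(T)\,y|_{V(H_1)} - y_b\,\mathbf{e}_a$. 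Pairing with the Perron eigenvector $z_1$ of $H_1$ gives $(\lambda_1(H_1)-\lambda_2(T))\,z_1^\top y|_{V(H_1)} = -y_b\,z_1(a)>0$, so $\lambda_2(T) < \lambda_1(H_1)$; pairing the analogous restricted equation on each component of $H_1 - a$ with its Perron eigenvector yields $\lambda_1(H_1 - a) < \lambda_2(T)$, and the $H_2$-side is symmetric.
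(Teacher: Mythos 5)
First, note that the paper does not actually prove this statement: Theorem \ref{thm:spectral_center} is imported verbatim from \cite{KMPZ_trees_diameter_2025}, so there is no in-paper proof to compare against. Judging your proposal on its own merits: Steps (1), (3) and (4) are essentially correct. The sign argument via orthogonality to the Perron vector, the cycle/eigenvalue-equation argument bounding the length of the $H_1$--$H_2$ path, and the pairing of the restricted eigenvalue equation with Perron vectors of $H_1$, $H_2$ and of the components of $H_1-a$, $H_2-b$ all go through (with small omissions, e.g.\ the uniqueness of $v$ in case (i), which follows from the same cycle arguments).

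The genuine gap is Step (2), which is the heart of the theorem. Your plan is to truncate $y$ at ``an appropriately chosen cut-vertex'' to get vectors $y^{(i)}$ whose eigenvalue-equation defects sit at ``one designated boundary location,'' and then cancel them with a single linear condition on $k\ge 2$ coefficients. Three things break. (a) If two components $A_1,A_2$ of $T[S^+(y)]$ are separated only by vertices of $S^-(y)$ (the a priori possible pattern $+\cdots+\,-\cdots-\,+\cdots+$ along a path), then every cut-vertex $c$ on the $A_1$--$A_2$ path has $y_c\ne 0$, and the truncation $y\cdot\mathbf{1}_{D}$ acquires defects both at $c$ and at the neighbour of $c$ inside $D$; these defects live at nonzero vertices and at different locations for different $i$, so there is no single scalar condition to impose. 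Ruling out this pattern is precisely the Fiedler-type sign-change statement you would need to prove, and the truncation trick does not deliver it. (b) Even when the separating vertex $c$ lies in $S^0(y)$, the clean truncations $y^{(i)}=y\cdot\mathbf{1}_{D_i}$ (over the components $D_i$ of $T-c$) have defects $d_i$ at $c$ satisfying $\sum_i d_i=\lambda_2 y_c=0$; when $S(y)$ is contained in just two of these components, the only combination killing the defect is $(c_1,c_2)\propto(1,1)$, which reproduces $y$ itself and reduces nothing. (c) Even when a genuine new eigenvector is produced, with $k=2$ pieces the solution of one linear condition generically has both coefficients nonzero, so the support of the combination need not ``strictly omit at least one $A_i$'' as you assert; minimality of support is then not contradicted. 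A correct proof has to confront these degenerate configurations directly (e.g.\ via the characteristic-vertex/edge theory for trees, or a careful induction using the branches at a vertex of $S^0(y)$), and as written your Step (2) does not.
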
 

The spectral vertex or edge is commonly referred to as the \emph{spectral center} of $T$. 

Next, we state some graph operations that increase (or decrease) the spectral radius.

\begin{lemma}\label{lemma:edge_addition} Let $G$ be a connected graph and $uv\notin E(G)$. Then $\lambda_1(G+uv)>\lambda_1(G)$. 
\end{lemma}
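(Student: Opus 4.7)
The plan is to apply the Perron--Frobenius theorem and the Rayleigh quotient characterization of $\lambda_1$. Since $G$ is connected, the adjacency matrix $A(G)$ is a nonnegative irreducible matrix, so by the Perron--Frobenius theorem there exists a positive unit eigenvector $\mathbf{x} = (x_w)_{w \in V(G)}$ satisfying $A(G)\mathbf{x} = \lambda_1(G)\mathbf{x}$ with $x_w > 0$ for every $w \in V(G)$. This positivity of every coordinate is what will supply the strict inequality, and it is the only nontrivial input we need.

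Now I would regard $\mathbf{x}$ as a vector on $V(G+uv) = V(G)$ and use it as a test vector for the Rayleigh quotient of $A(G+uv)$. Since $A(G+uv)$ differs from $A(G)$ only in the entries in positions $(u,v)$ and $(v,u)$, both of which change from $0$ to $1$, a direct computation gives
\[
\mathbf{x}^{T} A(G+uv)\, \mathbf{x} \;=\; \mathbf{x}^{T} A(G)\, \mathbf{x} \,+\, 2 x_u x_v \;=\; \lambda_1(G) \,+\, 2 x_u x_v,
\]
using $\mathbf{x}^T\mathbf{x}=1$. By the Rayleigh--Ritz characterization of the largest eigenvalue of a symmetric matrix,
\[
\lambda_1(G+uv) \;\ge\; \mathbf{x}^{T} A(G+uv)\, \mathbf{x} \;=\; \lambda_1(G) + 2 x_u x_v,
\]
and since $x_u, x_v > 0$ we conclude $\lambda_1(G+uv) > \lambda_1(G)$, as required.

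There is essentially no obstacle here beyond citing the Perron--Frobenius theorem to guarantee strict positivity of every entry of the eigenvector $\mathbf{x}$; without connectedness one could only deduce $x_u x_v \ge 0$ and the inequality could degenerate to equality. Since the lemma assumes $G$ is connected, this point is immediate, and the proof is a one-line Rayleigh-quotient argument after fixing the Perron eigenvector.
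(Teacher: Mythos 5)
Your proof is correct: the Perron--Frobenius positivity of the $\lambda_1$-eigenvector combined with the Rayleigh--Ritz characterization is exactly the standard argument for this classical fact, which the paper states without proof. Nothing is missing, and your remark about where connectedness is used (to get strict positivity of $x_u x_v$) correctly identifies the one point that needs care.
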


\begin{lemma}[Hanging Path Lemma \cite{Li_Feng_1979}, cf. Chapter 8 in \cite{CRS_2010}] \label{lemma:hanging_path} Let $G$ be a connected graph of order at least 2 and $v\in V(G)$. Denote by $G(k, \ell)$ the graph obtained from $G$ by attaching two paths of order $k$ and $\ell$ at $v$. If $k\ge \ell\ge 1$, then $\lambda_1(G(k, \ell)) > \lambda_1(G(k+1, \ell - 1))$.
\end{lemma}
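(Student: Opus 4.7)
My plan is to compare the characteristic polynomials of the two graphs by applying Lemma~\ref{lemma:char_edge} (to the pendant-path edges, which are bridges even when $G$ itself is not a tree) and then to invoke Chebyshev-style identities to isolate the sign of the difference. Write $\chi_m(x) := \Phi(P_m, x)$, with the conventions $\chi_0 = 1$ and $\chi_{-1} = 0$; these satisfy the recurrence $\chi_{m+1} = x\chi_m - \chi_{m-1}$ and the product identity $\chi_a\chi_b - \chi_{a+1}\chi_{b-1} = \chi_{a-b}$. Applying Lemma~\ref{lemma:char_edge} to the edge $vu_1$ and then to the edge $vw_1$, where $u_1, w_1$ are the first vertices on the two pendant paths of $G(a,b)$, yields for all $a, b \ge 0$:
\[
\Phi(G(a, b), x) = \Phi(G, x)\,\chi_a\chi_b - \Phi(G-v, x)\bigl(\chi_{a-1}\chi_b + \chi_a\chi_{b-1}\bigr).
\]

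Subtracting this identity at $(k+1, \ell-1)$ from the one at $(k, \ell)$, then applying the product identity twice and using $\chi_{k-\ell-1} + \chi_{k-\ell+1} = x\chi_{k-\ell}$, the difference telescopes to
\[
\Phi(G(k, \ell), x) - \Phi(G(k+1, \ell-1), x) = \chi_{k-\ell}(x)\bigl[\Phi(G, x) - x\,\Phi(G-v, x)\bigr].
\]
By the standard vertex-expansion identity at $v$, the bracket equals $-\Sigma(x)$, where
\[
\Sigma(x) := \sum_{w \sim v}\Phi(G-v-w, x) + 2\sum_{C \ni v}\Phi(G-C, x)
\]
(the cycle sum being absent when $G$ is a tree). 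Setting $x_0 := \lambda_1(G(k, \ell))$ and using $\Phi(G(k, \ell), x_0) = 0$ gives $\Phi(G(k+1, \ell-1), x_0) = \chi_{k-\ell}(x_0)\,\Sigma(x_0)$.

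I would then verify that both factors are strictly positive. Since $G(k, \ell)$ properly contains the path $P_{k+\ell+1}$ formed by joining the two pendant paths through $v$, strict Perron--Frobenius gives $x_0 > 2\cos(\pi/(k+\ell+2)) > 2\cos(\pi/(k-\ell+1))$, the latter being the largest root of $\chi_{k-\ell}$; hence $\chi_{k-\ell}(x_0) > 0$. Each subgraph $G-v-w$ (and each $G-C$) is a proper subgraph of the connected graph $G(k, \ell)$, so $\lambda_1(G-v-w) < x_0$ and $\Phi(G-v-w, x_0) > 0$. Since $v$ has at least one neighbor in $G$ (which is connected of order $\ge 2$), $\Sigma(x_0) > 0$. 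Thus $\Phi(G(k+1, \ell-1), x_0) > 0$, and since $\Phi(G(k+1, \ell-1), \cdot)$ is positive precisely above its largest root, we conclude $\lambda_1(G(k+1, \ell-1)) < x_0$, as required.

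The main obstacle is the sign analysis: establishing $\chi_{k-\ell}(x_0) > 0$ hinges on the sharp Perron--Frobenius lower bound on $x_0$ via the embedded path $P_{k+\ell+1}$, and $\Sigma(x_0) > 0$ requires strict Perron--Frobenius across several proper subgraph relations. The Chebyshev telescoping, while mechanical, must also handle the boundary cases $\ell = 1$ (so $\ell-1=0$ and $\chi_{-1} = 0$ enters) and $k = \ell$ (so $\chi_{k-\ell} = \chi_0 = 1$); both are absorbed by the conventions $\chi_0 = 1$ and $\chi_{-1} = 0$.
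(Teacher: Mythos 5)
The paper does not actually prove this lemma --- it is quoted from Li and Feng with a pointer to Chapter~8 of the Cvetkovi\'c--Rowlinson--Simi\'c book --- so there is no in-paper argument to compare against; your proposal must stand on its own. It very nearly does. The two applications of Lemma~\ref{lemma:char_edge} to the bridge edges give exactly the stated expansion of $\Phi(G(a,b),x)$; the product identity $\chi_a\chi_b-\chi_{a+1}\chi_{b-1}=\chi_{a-b}$ and the recurrence do telescope the difference to $\chi_{k-\ell}(x)\bigl[\Phi(G,x)-x\,\Phi(G-v,x)\bigr]$ (including the boundary cases $\ell=1$ and $k=\ell$ via the conventions $\chi_0=1$, $\chi_{-1}=0$); Schwenk's vertex expansion correctly identifies the bracket as $-\Sigma(x)$; and the positivity of $\chi_{k-\ell}(x_0)$ via $x_0>\lambda_1(P_{k+\ell+1})=2\cos(\pi/(k+\ell+2))>2\cos(\pi/(k-\ell+1))$, and of $\Sigma(x_0)$ via strict Perron--Frobenius on proper subgraphs of the connected graph $G(k,\ell)$, are both sound.

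The one genuinely flawed step is your last inference: a characteristic polynomial is \emph{not} ``positive precisely above its largest root'' --- for instance $\Phi(P_3,x)=x(x^2-2)$ is positive on $(-\sqrt{2},0)$ --- so $\Phi(G(k+1,\ell-1),x_0)>0$ by itself does not force $\lambda_1(G(k+1,\ell-1))<x_0$. The repair costs nothing beyond what you already established: every positivity claim in your sign analysis holds for all $x\ge x_0$, not just at $x_0$. Indeed $\Phi(G(k,\ell),x)\ge 0$ on $[x_0,\infty)$ because $x_0$ is the largest root of that monic polynomial, and $\chi_{k-\ell}(x)\,\Sigma(x)>0$ there because all the relevant largest roots lie strictly below $x_0$. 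Hence
$\Phi(G(k+1,\ell-1),x)=\Phi(G(k,\ell),x)+\chi_{k-\ell}(x)\,\Sigma(x)>0$ for every $x\ge x_0$, so $G(k+1,\ell-1)$ has no eigenvalue in $[x_0,\infty)$, which is the desired strict inequality. With that single sentence replaced, your proof is complete and is the classical argument for this lemma.
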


Let $u$ and $v$ be distinct vertices in a graph $G$. Kelmans Operation on $G$ from $u$ to $v$ is defined as follows: replace each edge $ua$ with $va$ whenever $u\sim a\nsim v$ ($a$ distinct from $u,v$) in $G$ to obtain a new graph $G'$. Note that $G$ and $G'$ have the same number of vertices and edges. Moreover, we get the same graph $G'$ (up to isomorphism) when the roles of $u$ and $v$ are interchanged.   

\begin{lemma}[Kelmans Operation \cite{Csikvari_2009}]\label{lemma:Kelmans} Let $u$ and $v$ be vertices in a connected graph $G$ such that $N(u)\backslash \{v\} \nsubseteq N(v)\backslash \{u\}$ and $N(v)\backslash \{u\} \nsubseteq N(u)\backslash \{v\}$. If $G'$ is obtained from $G$ by applying Kelmans Operation from vertex $u$ to $v$, then $\lambda_1(G')>\lambda_1(G)$. 
\end{lemma}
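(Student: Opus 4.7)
The plan is to apply the Rayleigh principle using a Perron eigenvector of $G$ as a test vector for $A(G')$. Let $x\in\mathbb{R}^{V(G)}$ be a unit Perron eigenvector of $A(G)$ for the eigenvalue $\lambda_1(G)$; since $G$ is connected, $x>0$ coordinatewise by the Perron--Frobenius theorem. Because the Kelmans operation from $u$ to $v$ yields (up to isomorphism) the same graph as the operation from $v$ to $u$, I may swap the labels of $u$ and $v$ if needed and assume $x_v\ge x_u$.

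Next I would pin down exactly which edges change. Let $A_u:=N(u)\setminus(N(v)\cup\{v\})$ denote the set of $a\ne v$ with $u\sim a\not\sim v$. The hypothesis $N(u)\setminus\{v\}\not\subseteq N(v)\setminus\{u\}$ is precisely the statement that $A_u\ne\emptyset$, and by the definition of the Kelmans operation
\[ E(G')=\bigl(E(G)\setminus\{ua:a\in A_u\}\bigr)\cup\{va:a\in A_u\}. \]
A direct expansion of the quadratic form then yields the key identity
\[ x^T A(G')x - x^T A(G)x \;=\; 2(x_v-x_u)\sum_{a\in A_u} x_a. \]
When $x_v>x_u$ the right-hand side is strictly positive (each $x_a>0$ and $A_u\ne\emptyset$), so the Rayleigh principle gives $\lambda_1(G')\ge x^T A(G')x>\lambda_1(G)$, as required.

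The main obstacle is the boundary case $x_v=x_u$, in which the test vector $x$ detects no increase. I would dispatch it by contradiction: assume $\lambda_1(G')=\lambda_1(G)$. Then $x$, being a unit vector that attains the maximum of the Rayleigh quotient of $A(G')$, must lie in the $\lambda_1(G')$-eigenspace, so $A(G')x=\lambda_1(G)x$. However, comparing the entries of $A(G')x$ and $A(G)x$ coordinate by coordinate shows that, in the case $x_v=x_u$,
\[ A(G')x-A(G)x\;=\;\Bigl(\sum_{a\in A_u} x_a\Bigr)(e_v-e_u), \]
since the $u$- and $v$-coordinates change by $\mp\sum_{a\in A_u}x_a$ while every entry indexed by some $a\in A_u$ picks up $x_v-x_u=0$ and the remaining entries are unchanged. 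Combined with $A(G)x=\lambda_1(G)x$, this forces $(\sum_{a\in A_u}x_a)(e_v-e_u)=0$, contradicting $u\ne v$ and the positivity of $\sum_{a\in A_u}x_a$. Hence $\lambda_1(G')>\lambda_1(G)$ in the degenerate case as well.
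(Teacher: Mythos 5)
The paper does not prove this lemma --- it is quoted from Csikv\'ari --- so your argument has to stand on its own, and its two main ingredients are sound. The Rayleigh-quotient identity $x^TA(G')x-x^TA(G)x=2(x_v-x_u)\sum_{a\in A_u}x_a$ is correct, and your treatment of the degenerate case $x_u=x_v$ is a genuinely nice touch: equality of the two spectral radii would force the Perron vector $x$ of $G$ to be a top eigenvector of $A(G')$, and comparing $A(G')x$ with $A(G)x$ coordinatewise then forces $\sum_{a\in A_u}x_a=0$, which is impossible. That part I would keep verbatim.

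The gap is in the step ``WLOG $x_v\ge x_u$.'' Interchanging the roles of $u$ and $v$ does produce an isomorphic graph $G'$, but it also changes the set of transferred edges: the operation from $v$ to $u$ moves the edges $va$ with $a\in A_v:=N(v)\setminus(N(u)\cup\{u\})$, and the hypothesis of the lemma guarantees only $A_u\neq\emptyset$, not $A_v\neq\emptyset$. If $x_u>x_v$ and $A_v=\emptyset$, your relabelled key identity reads $2(x_u-x_v)\cdot 0=0$ and the argument yields nothing. This corner is not repairable, because there the conclusion itself fails: when $A_v=\emptyset$ one checks that relabelling $G'$ by the transposition $(u\,v)$ recovers $G$, so $G'\cong G$ and $\lambda_1(G')=\lambda_1(G)$. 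A concrete instance is $G=P_3$ with $u$ the centre and $v$ a leaf: the hypothesis $N(u)\setminus\{v\}=\{w\}\nsubseteq\emptyset=N(v)\setminus\{u\}$ holds, yet the operation just reverses the path. So the lemma as stated is slightly too strong; the citable result is $\lambda_1(G')\ge\lambda_1(G)$, with strict inequality exactly when both $A_u$ and $A_v$ are nonempty --- which is precisely the condition under which your swap is legitimate, so your argument is a complete and correct proof of that corrected statement. None of the paper's applications are affected (in each of them both vertices lose and gain neighbours, or the compared trees are visibly non-isomorphic), but you should either add the symmetric hypothesis $N(v)\setminus\{u\}\nsubseteq N(u)\setminus\{v\}$ or weaken the conclusion to $\ge$ and record when equality occurs.
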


We say a path $P=v_0 v_1\ldots v_s$ in a graph $G$ is an \emph{internal path} if $\deg(v_0)\ge 3$, $\deg(v_i)=2$ for $1\leq i \leq s-1$, and $\deg(v_s)\ge 3$. Let $uv$ be an edge in $G$. The graph $G'$ obtained from $G$ by \emph{contracting} the edge $uv$ is defined as follows: we replace each edge $ua$ with $va$ whenever $u\sim a \nsim v$ ($a$ distinct from $u,v$), and then we delete the vertex $u$. 

\begin{lemma}[Contraction Lemma \cite{Mckee_2018}]\label{lemma:Contraction} Let $uv$ be an edge on an internal path in a connected graph $G$ such that $u$ and $v$ have no common neighbours. Obtain $G'$ from $G$ by contracting the edge $uv$. Then $\lambda_1(G')\ge \lambda_1(G)$ and equality holds if and only if $\lambda_1(G)=2$.
\end{lemma}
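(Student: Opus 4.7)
The plan is to prove $\lambda_1(G') \ge \lambda_1(G)$ by first establishing $\lambda := \lambda_1(G) \ge 2$ from the internal-path hypothesis, then appealing to the classical Hoffman--Smith subdivision theorem when the internal path has length at least $2$, and handling the remaining length-$1$ case by a direct spectral argument.

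For the preliminary step, I would observe that the internal path containing $uv$ has both endpoints of degree at least $3$, and picking two additional neighbours at each endpoint produces a subgraph of $G$ isomorphic either to an extended Dynkin diagram $\tilde D_n$ for some $n \ge 4$ (when the extra neighbours are disjoint from the path) or to a cycle (when some coincide with path vertices). Both families have spectral radius exactly $2$, so $\lambda \ge 2$ by monotonicity of $\lambda_1$ under the subgraph relation. This confines the equality case to the realm of Smith's classification of connected graphs with $\lambda_1 = 2$.

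For the main argument, I would split into cases by the length $s$ of the internal path. If $s \ge 2$, one can assume without loss of generality that $\deg_G u = 2$ with $N_G(u) = \{v, u'\}$, and the no-common-neighbour hypothesis forces $u'v \notin E(G)$. Thus $G$ is exactly the subdivision of the edge $u'v$ of $G'$, and since contraction preserves the degrees of $u'$ and $v$, this edge still lies on an internal path of $G'$. The Hoffman--Smith theorem then yields $\lambda_1(G) \le \lambda_1(G')$ with equality iff $\lambda_1(G') = 2$, which combined with $\lambda \ge 2$ means iff $\lambda_1(G) = 2$. The residual case $s = 1$, where $u$ and $v$ both have degree $\ge 3$ and $G'$ is a true vertex merger rather than a subdivision, requires a direct Rayleigh-quotient argument: using the Perron eigenvector $x$ of $G$ I would construct a test vector on $V(G') = V(G) \setminus \{u\}$ that distributes the weight $x_u$ among $v$ and its new neighbours in a structure-aware way, and verify that its Rayleigh quotient is at least $\lambda$. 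Naive redistributions such as $y_v = x_u + x_v$ can be shown by direct computation to succeed only for $\lambda \ge 3$; handling the range $2 \le \lambda < 3$ requires a more delicate choice, possibly via a Kelmans operation from $u$ to $v$ (strictly increasing $\lambda_1$ by Lemma~\ref{lemma:Kelmans}) followed by deletion of the now-dominated vertex $u$, combined with an interlacing estimate sharpened by the domination $N_{G^*}[u] \subseteq N_{G^*}[v]$.

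The principal obstacle is precisely this delicate construction in the $s = 1$ case, together with the corresponding equality analysis, which I expect pins the extremal graph down to $\tilde D_5$, whose central-edge contraction produces $\tilde D_4 = K_{1,4}$, both of spectral radius $2$.
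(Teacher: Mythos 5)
The paper does not prove this lemma at all: it is imported verbatim from McKee's paper, so there is no internal argument to compare against. Judged on its own terms, your reduction of the case $s\ge 2$ is sound: when the contracted edge has an endpoint of degree $2$, the graph $G$ is literally the subdivision of an edge of $G'$ lying on an internal path of $G'$, Hoffman--Smith gives $\lambda_1(G)\le\lambda_1(G')$, and Smith's classification (only $\tilde D_n$ among the $\lambda_1=2$ graphs contains an internal path with two distinct endpoints of degree $\ge 3$) converts the Hoffman--Smith equality condition into ``$\lambda_1(G)=2$''. The preliminary observation that $\lambda_1(G)\ge 2$ is also fine.

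The genuine gap is the case $s=1$, which you explicitly leave open, and it is not a dispensable corner case: it is the only case the paper actually uses. In Claim \ref{claim:a_u1} the edge being contracted is $v_ku_2$, where $\deg_H(v_k)\ge 3$ and $\deg(u_2)$ is linear in $n$, so both endpoints have degree at least $3$ and the internal path has length exactly $1$. Your own diagnosis of why the direct approach fails is correct: writing $S_u=\lambda x_u-x_v$, $S_v=\lambda x_v-x_u$ and optimizing the single free entry $y_v$ of the test vector, the Rayleigh quotient of $y$ on $G'$ is at least $\lambda$ precisely when $2x_ux_v(\lambda^2-\lambda+1)\ge(x_u^2+x_v^2)(2\lambda-1)$, which reduces to $(\lambda-1)(\lambda-2)\ge 0$ only when $x_u=x_v$ and can fail for $\lambda$ near $2$ when $x_u\ne x_v$. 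The fallback you sketch --- Kelmans from $u$ to $v$ followed by deleting the now-pendant vertex $u$ --- produces $G'$ plus a pendant at the merged vertex with strictly larger $\lambda_1$ than $G$, but deleting that pendant can only be controlled by interlacing, which yields $\lambda_1(G')\ge\lambda_2(G'+\text{pendant})$ and does not recover $\lambda_1(G')\ge\lambda_1(G)$. Until you supply the ``more delicate choice'' or ``sharpened interlacing estimate'' you allude to, the statement remains unproved exactly where the paper needs it; as written, the correct course is to cite McKee as the paper does, or to work out the $s=1$ case in full (e.g.\ via the characteristic-polynomial identities of Lemma \ref{lemma:char_edge} applied at the edge $uv$) rather than gesture at it.
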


The following is a consequence of the Courant-Fischer Theorem, as observed in \cite{Ebrahimi_Mohar_Nikiforov_Ahmady_2008}. 

\begin{lemma}[\cite{Ebrahimi_Mohar_Nikiforov_Ahmady_2008}]\label{lemma:spectral_sum}
For any graph $G$, we have
    \[\lambda_1(G) + \lambda_2(G) = \max_{\substack{||x||=||y||=1,\\ x^Ty = 0}} x^TA(G)x + y^TA(G)y.\]
\end{lemma}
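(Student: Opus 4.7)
The plan is to obtain the identity as a direct corollary of the spectral decomposition of $A(G)$, essentially the $k=2$ case of the Ky Fan maximum principle. I would diagonalize $A(G)$ by an orthonormal eigenbasis $v_1,\dots,v_n$ with eigenvalues $\lambda_1\geq\cdots\geq\lambda_n$. For any orthonormal pair $x,y\in\mathbb{R}^n$, expand $x=\sum_i a_i v_i$ and $y=\sum_i b_i v_i$, so that a direct computation gives
\[ x^T A(G) x + y^T A(G) y \;=\; \sum_{i=1}^n \lambda_i (a_i^2+b_i^2) \;=\; \sum_{i=1}^n \lambda_i c_i, \]
where $c_i := a_i^2+b_i^2 \geq 0$.

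The central (and essentially only nontrivial) step is to characterize the feasible region for the vector $(c_1,\dots,c_n)$. Let $M$ be the $2\times n$ matrix whose rows are the coordinate representations of $x$ and $y$ in the basis $v_1,\dots,v_n$. Orthonormality of $x,y$ says exactly that $MM^T=I_2$, so $P := M^TM$ is an orthogonal projection onto a $2$-dimensional subspace of $\mathbb{R}^n$. The diagonal entries of $P$ are precisely the $c_i$, and any orthogonal projection satisfies $0 \leq P \leq I$, which forces $0 \leq c_i \leq 1$; moreover $\sum_i c_i = \mathrm{tr}(P) = \mathrm{rank}(P) = 2$.

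Maximizing the linear functional $\sum_i \lambda_i c_i$ subject to $c_i \in [0,1]$ and $\sum_i c_i = 2$ is then immediate: the maximum is $\lambda_1+\lambda_2$, attained by concentrating all the mass on the two largest eigenvalues, i.e.\ $c_1=c_2=1$. Equality in the original formulation is realized by the choice $x=v_1$, $y=v_2$, which are orthonormal and yield $x^T A(G) x + y^T A(G) y = \lambda_1+\lambda_2$. The only place where something nontrivial happens is the projection interpretation of the $c_i$ (especially the bound $c_i\leq 1$); everything else is bookkeeping and an elementary linear-programming argument on a very small feasible region.
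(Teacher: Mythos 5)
Your proof is correct. Note that the paper does not actually prove this lemma: it is quoted from Ebrahimi--Mohar--Nikiforov--Ahmady and described only as ``a consequence of the Courant--Fischer Theorem,'' the implicit route being that for any orthonormal pair $x,y$ the quantity $x^TA x+y^TA y$ is the trace of the compression of $A$ to the plane $\mathrm{span}\{x,y\}$, whose two eigenvalues are bounded by $\lambda_1$ and $\lambda_2$ via Cauchy interlacing. Your argument instead gives the standard self-contained proof of the $k=2$ Ky Fan maximum principle: expanding in the eigenbasis, observing that the vector $(c_i)=(a_i^2+b_i^2)$ is the diagonal of the rank-two orthogonal projection $M^TM$ (so $0\le c_i\le 1$ and $\sum_i c_i=2$), and then solving the trivial linear program. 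All steps check out --- in particular $P^2=M^TMM^TM=M^TM=P$ since $MM^T=I_2$, the bound $c_i\le 1$ follows from $I-P\succeq 0$, and the maximum $\lambda_1+\lambda_2$ of $\sum_i\lambda_i c_i$ is attained at $x=v_1$, $y=v_2$. The interlacing route is marginally shorter if one already has Cauchy interlacing in hand (as this paper does), while your projection argument is more elementary and generalizes verbatim to sums of the top $k$ eigenvalues.
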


Next, we describe a rotation operation in a tree which increases the value of $\Psi(T, \alpha)$. 

\begin{lemma}\label{lemma:rotation_alpha}
   Let $T$ be a tree and $x,y$ denote its unit $\lambda_1$-eigenvector and $\lambda_2$-eigenvector. Suppose $u, v, w\in V(T)$ are distinct vertices such that $u\sim v$ and $v\sim w$. Define $T' = T - vw + uw$. Fix $\frac{1}{2}\le \alpha \le 1$. If \[\alpha x_w(x_u - x_v) + (1-\alpha)y_w(y_u - y_v)>0,\] then 
   $\Psi(T', \alpha) > \Psi(T, \alpha)$.
\end{lemma}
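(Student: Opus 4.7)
The plan is to reduce the statement to a Rayleigh quotient computation via a weighted analog of Lemma \ref{lemma:spectral_sum}. The key observation is that, for any real symmetric matrix $A$ and any $\alpha \ge \tfrac{1}{2}$, one has the variational identity
\[
\alpha\, \lambda_1(A) + (1-\alpha)\, \lambda_2(A) \;=\; \max_{\substack{\|x\|=\|y\|=1 \\ x^T y = 0}} \;\alpha\, x^T A x + (1-\alpha)\, y^T A y.
\]
This is essentially Ky Fan's maximum principle, but I would prove it directly from the decomposition
\[
\alpha\, \lambda_1 + (1-\alpha)\, \lambda_2 \;=\; (2\alpha - 1)\,\lambda_1 + (1-\alpha)(\lambda_1 + \lambda_2),
\]
in which both coefficients are nonnegative when $\alpha \ge \tfrac{1}{2}$, by combining the Rayleigh bound $\lambda_1 \ge x^T A x$ (applied with the same unit vector $x$) with Lemma \ref{lemma:spectral_sum} applied to $\lambda_1 + \lambda_2$. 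Equality is attained by taking $x, y$ to be an orthonormal pair of top eigenvectors, which is possible since $\lambda_1 > \lambda_2$ for any connected graph by Perron--Frobenius.

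With this in hand the proof becomes routine. Let $x, y$ be the unit eigenvectors for $\lambda_1(T)$ and $\lambda_2(T)$ given in the statement; they are automatically orthogonal, and
\[
\Psi(T, \alpha) = \alpha\, x^T A(T) x + (1-\alpha)\, y^T A(T) y.
\]
Applying the variational identity above to $T'$ with this same orthonormal pair yields
\[
\Psi(T', \alpha) \;\ge\; \alpha\, x^T A(T') x + (1-\alpha)\, y^T A(T') y.
\]
Since $T' = T - vw + uw$, the symmetric matrix $A(T') - A(T)$ has entries $+1$ at the positions $(u,w), (w,u)$ and $-1$ at $(v,w), (w,v)$, and zero elsewhere. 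A direct computation then gives
\[
z^T\bigl(A(T') - A(T)\bigr) z = 2\, z_w(z_u - z_v)
\]
for any vector $z$. Substituting $z = x$ and $z = y$ and subtracting $\Psi(T,\alpha)$ from $\Psi(T',\alpha)$ produces
\[
\Psi(T',\alpha) - \Psi(T,\alpha) \;\ge\; 2\bigl[\alpha\, x_w(x_u - x_v) + (1-\alpha)\, y_w(y_u - y_v)\bigr] \;>\; 0
\]
by the hypothesis, which finishes the proof.

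The only step that requires real thought is the weighted variational characterization; once it is in place, the remainder is a few lines of linear algebra. I do not anticipate a serious obstacle. The assumption $\alpha \ge \tfrac{1}{2}$ is used in exactly one place, namely to ensure that both coefficients in the decomposition $\alpha\lambda_1 + (1-\alpha)\lambda_2 = (2\alpha - 1)\lambda_1 + (1-\alpha)(\lambda_1 + \lambda_2)$ are nonnegative, so that the resulting inequality runs in the direction needed to obtain a lower bound on $\Psi(T',\alpha)$.
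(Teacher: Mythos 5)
Your proof is correct and follows essentially the same route as the paper: the decomposition $\alpha\lambda_1+(1-\alpha)\lambda_2=(2\alpha-1)\lambda_1+(1-\alpha)(\lambda_1+\lambda_2)$, the Rayleigh bound for $\lambda_1$, Lemma \ref{lemma:spectral_sum} for $\lambda_1+\lambda_2$, and the computation $z^TA(T')z-z^TA(T)z=2z_w(z_u-z_v)$ are exactly the ingredients of the paper's argument, which you have merely packaged as an explicit Ky Fan--type variational inequality before applying it to $T'$.
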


\begin{proof}
    Using Lemma \ref{lemma:spectral_sum}, we have 
    \begin{align*}
        \Psi(T', \alpha) & = \alpha \lambda_1(T') + (1-\alpha)\lambda_2(T')\\
         &  =\left(\alpha - (1-\alpha)\right)\lambda_1(T')  + (1-\alpha)\left(\lambda_1(T') + \lambda_2(T')\right)\\
         & \ge \left(\alpha - (1-\alpha)\right)\left(\lambda_1(T) + 2x_w(x_u - x_v)\right) \\
         & \quad + (1-\alpha)\left(\lambda_1(T) + \lambda_2(T) + 2x_w(x_u - x_v) + 2y_w(y_u - y_v)\right)\\
         & = \Psi(T, \alpha) + 2\alpha x_w(x_u - x_v) + 2(1-\alpha)y_w(y_u - y_v)\\
         & > \Psi(T, \alpha). \qedhere
    \end{align*}
\end{proof}

We also note here some consequences of the eigenvalue-eigenvector equation, which will often be used later.

\begin{lemma}\label{lemma:eigenvalue_equations}
Let $T$ be a tree with $\lambda$-eigenvector $z$. For any $u\in V(T)$, the following equations hold.
   \begin{enumerate}[$(i)$]
       \item $\lambda^2z_u = \deg(u)z_u + \sum_{d(u,w) = 2} z_w$. 
       \item $ \lambda\left(\lambda^2 - \deg(u)\right)z_u = \sum_{v\in N(u)}z_v (\deg(v)-1) + \sum_{d(u,v) = 3}z_v$.
   \end{enumerate}
\end{lemma}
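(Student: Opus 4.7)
The plan is to derive both identities from the basic eigenvalue-eigenvector relation
\[ \lambda z_u = \sum_{v\in N(u)} z_v \tag{$\ast$} \]
by iterating and exploiting uniqueness of paths in a tree. The two facts I will use repeatedly are: for any vertex $u$ in $T$ and any $k\ge 1$, each vertex $w$ with $d(u,w)=k$ is reached from $u$ by a unique path, and in particular is the neighbour of exactly one vertex at distance $k-1$ from $u$. This lets me rewrite double sums of the form $\sum_{v\in N(u)}\sum_{w\in N(v)}(\cdot)$ as sums over concentric distance classes around $u$.

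For part $(i)$, I would apply $(\ast)$ once and then substitute it into itself:
\[ \lambda^2 z_u = \lambda\sum_{v\in N(u)} z_v = \sum_{v\in N(u)}\lambda z_v = \sum_{v\in N(u)}\sum_{w\in N(v)} z_w. \]
Now split each inner neighbourhood $N(v)$ into $\{u\}$ (contributing $z_u$, and this happens exactly once for each $v\in N(u)$) together with $N(v)\setminus\{u\}$, whose vertices are at distance $2$ from $u$; since $T$ is a tree, different $v$'s give disjoint sets and together they enumerate every $w$ with $d(u,w)=2$ exactly once. Hence
\[ \lambda^2 z_u = \deg(u)\,z_u + \sum_{d(u,w)=2} z_w, \]
which is $(i)$.

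For part $(ii)$, I multiply $(i)$ by $\lambda$ to obtain
\[ \lambda(\lambda^2-\deg(u))z_u = \lambda\sum_{d(u,w)=2} z_w = \sum_{d(u,w)=2}\sum_{x\in N(w)} z_x. \]
For each such $w$, let $v=v(w)$ be the unique neighbour of $u$ on the $u$--$w$ path; then $N(w)=\{v\}\cup(N(w)\setminus\{v\})$, and the vertices in $N(w)\setminus\{v\}$ are exactly the neighbours of $w$ that lie at distance $3$ from $u$. Grouping the resulting double sum by the neighbour $v$ of $u$ gives two contributions: first, for each $v\in N(u)$ the vertex $v$ is counted once for every $w\in N(v)\setminus\{u\}$, i.e.\ $(\deg(v)-1)$ times, producing $\sum_{v\in N(u)}(\deg(v)-1)z_v$; and second, every vertex at distance $3$ from $u$ arises exactly once (again by uniqueness of paths in $T$), producing $\sum_{d(u,v)=3} z_v$. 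Adding these yields $(ii)$.

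There is no serious obstacle here: the only thing to be careful about is the bookkeeping that relies crucially on $T$ being a tree, which ensures that no vertex is double counted when expanding the iterated neighbourhood sums.
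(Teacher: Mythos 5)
Your proposal is correct and follows the same route as the paper: both parts are obtained by iterating the eigenvalue--eigenvector relation and regrouping the resulting double sums over neighbourhoods into distance classes around $u$, using the tree structure to avoid double counting. Your write-up simply makes the bookkeeping (disjointness of the sets $N(v)\setminus\{u\}$ and the unique-parent argument at distance $3$) more explicit than the paper does.
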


\begin{proof}
Using the eigenvalue-eigenvector equation, we have
\begin{align*}
   \lambda^2z_u & = \lambda \left(\sum_{v\in N(u)}z_v\right)  = \sum_{v\in N(u)} \sum_{w\in N(v)} z_w\\
   & = \deg(u)z_u + \sum_{v\in N(u)} \sum_{\substack{w\in N(v)\\ w\neq u}} z_w = \deg(u)z_u + \sum_{d(w,u)=2} z_w.   
\end{align*}
This proves $(i)$. Now, using $(i)$, we get
\begin{align*}
 \lambda\left(\lambda^2 - \deg(u)\right)z_u & = \lambda\sum_{d(u,w) = 2} z_w\\
 & = \sum_{d(u,w) = 2} \sum_{v\in N(w)} z_v\\
 & = \sum_{v\in N(u)}z_v (\deg(v)-1) + \sum_{d(u,v) = 3}z_v. \qedhere
\end{align*}
\end{proof}

Finally, we require a result determining the extremal tree in $\mathcal{T}(n)$ with second maximum $\lambda_1$. 

\begin{proposition}[\cite{Hofmeister_1997}, cf. Section 2 in \cite{KMPZ_trees_diameter_2025}]\label{prop:second_largest_lambda_one_trees} Let $n\ge 3$ and $T\in \mathcal{T}(n)$. If $T\neq K_{1,n-1}$, then 
\[ \lambda_1(T)\le \sqrt{\frac{1}{2}\big(n-1+\sqrt{n^2-6n+13}\big)}\] and equality holds if and only if $T\cong DC(n-3, 1, 2)$. 
\end{proposition}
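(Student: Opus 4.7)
The plan is to split the argument according to the value of $\Delta(T)$. In the case $\Delta(T)=n-2$, I would argue that $T\cong DC(n-3,1,2)$ by a purely structural argument: if $v$ is the vertex of degree $n-2$, then the unique vertex $w\notin N[v]$ must be adjacent to exactly one neighbor $u$ of $v$ and must itself be a leaf (else one would either create a cycle or require additional vertices). This pins down $T$ up to isomorphism and accounts for the equality case.

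For the remaining case $\Delta(T)\le n-3$, I would prove the strict inequality $\lambda_1(T)<\lambda_1(DC(n-3,1,2))$ by induction on $n-\Delta(T)$, using the Kelmans operation (Lemma~\ref{lemma:Kelmans}). Letting $v$ be a max-degree vertex with $d=\deg(v)$, the inductive step seeks a non-leaf neighbor $u$ of $v$ with $\deg(u)+\deg(v)\le n-1$ and applies Kelmans from $u$ to $v$. Because $u\sim v$ in a tree and trees are triangle-free, $u$ and $v$ share no common neighbor, so the result $T'$ is again a tree in which $u$ has become a pendant leaf of $v$, with $\deg_{T'}(v)=d+\deg(u)-1\le n-2$. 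Lemma~\ref{lemma:Kelmans} then gives $\lambda_1(T')>\lambda_1(T)$, and since $\Delta(T')>\Delta(T)$, the inductive hypothesis (with base $\Delta=n-2$ from the first case) yields $\lambda_1(T')\le\lambda_1(DC(n-3,1,2))$, closing the induction.

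The main obstacle is guaranteeing the existence of a suitable $u$. I would address this by a degree-counting argument: if no such $u$ exists, then every non-leaf neighbor of $v$ has $\deg(u)\ge n-d$. Expanding $\sum_{w}\deg(w)=2(n-1)$ and bounding the contributions of $v$, its leaf and non-leaf neighbors, and the vertices outside $N[v]$ separately gives the inequality $k(n-d-1)\le n-1-d$, where $k$ is the number of non-leaf neighbors of $v$. Since $n-d-1=n-1-d$, this forces $k\le 1$: the case $k=0$ gives $T=K_{1,n-1}$ (excluded by hypothesis), while $k=1$ pins down $T\cong DC(d-1,n-d-1,2)$ with $d\in[\lceil n/2\rceil,n-3]$. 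For this residual double-star family, Proposition~\ref{prop:DC_path_2} reduces the desired inequality to the elementary check $(d-1)(n-d-1)>n-3$, which holds at both endpoints $d=\lceil n/2\rceil$ and $d=n-3$ (and hence throughout the interval, since the quadratic in $d$ is concave) whenever $n\ge 6$; the small case $n=5$ has $T=P_5$ as the only possibility and is directly reduced to $DC(2,1,2)=DC(n-3,1,2)$ by a single Kelmans step.
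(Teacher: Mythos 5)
The paper does not actually prove Proposition~\ref{prop:second_largest_lambda_one_trees}; it imports it from Hofmeister and from \cite{KMPZ_trees_diameter_2025}, so there is no in-paper argument to match yours against. Your proof is correct and self-contained, and it is pleasant that it uses only tools already present in the paper: the Kelmans operation (Lemma~\ref{lemma:Kelmans}) and the explicit formula of Proposition~\ref{prop:DC_path_2}. The structure is sound: the case $\Delta(T)=n-2$ forces $T\cong DC(n-3,1,2)$ exactly as you say, which both establishes the equality case and serves as the base of the induction on $n-\Delta(T)$; in the inductive step the condition $\deg(u)+\deg(v)\le n-1$ correctly guarantees $\Delta(T')\le n-2$ (note also that no vertex other than $v$ gains degree under Kelmans, so $\Delta(T')=\deg_{T'}(v)$), and the hypothesis of Lemma~\ref{lemma:Kelmans} holds because $u$ is a non-leaf and trees are triangle-free. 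Your degree count is right: it yields $k(n-d-1)\le n-1-d$, hence $k\le 1$, and in the case $k=1$ the count is tight, which is what forces every vertex outside $N[v]$ to be a leaf and hence $T\cong DC(d-1,n-d-1,2)$ --- you should make that tightness step explicit, since it is the only place where the structural conclusion actually comes from. The final verification $(d-1)(n-d-1)>n-3$ on $\lceil n/2\rceil\le d\le n-3$ via concavity is correct for $n\ge 6$ (at $d=\lceil n/2\rceil$ it reduces to $(n-4)^2>0$ for even $n$ and to $n>5$ for odd $n$), and the small cases $n\in\{3,4,5\}$ are either vacuous, covered by the base case, or handled by the general inductive step as you note. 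No gaps.
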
  

\section{Maximizing $\Psi_n(\alpha)$ when $0\le \alpha \le \frac{1}{2}$}
\label{section:convex_combination_alpha_less_than_half}

We first determine the extremal tree(s) in $\mathcal{T}(n)$ with second maximum $\lambda_2$ when $n$ is even.

\begin{proposition}\label{prop:second_maximum_lambda_2_even} Let $n\ge 12$ be even and $T\in \mathcal{T}(n)$. If $T\neq DC(\frac{n-4}{2}, \frac{n-4}{2}, 4)$, then 
\[ \lambda_2(T)\le \sqrt{\frac{n-1-\sqrt{5}}{2}},\]
and equality holds if and only if $T\cong DC(\frac{n-4}{2}, \frac{n-2}{2}, 3)$.
\end{proposition}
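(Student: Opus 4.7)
The plan is to apply Theorem~\ref{thm:spectral_center} to a $\lambda_2(T)$-eigenvector of minimal support, obtaining rooted subtrees $H_1, H_2$ of orders $m_1, m_2$, and to analyse the spectral-vertex and spectral-edge cases separately. Suppose $T \neq DC(\tfrac{n-4}{2},\tfrac{n-4}{2},4)$ and assume $\lambda_2(T) \ge \sqrt{(n-1-\sqrt{5})/2}$. Applying Theorem~\ref{thm:lambda_one_max_trees} inside $H_i$ gives $\lambda_1(H_i)^2 \le m_i - 1$ with equality iff $H_i$ is a star, and Proposition~\ref{prop:second_largest_lambda_one_trees} gives $\lambda_1(H_i)^2 \le (m_i-1+\sqrt{m_i^2-6m_i+13})/2$ when $H_i$ is not a star. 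Combining each of these with $\lambda_1(H_i) \ge \lambda_2(T) \ge \sqrt{(n-1-\sqrt{5})/2}$ and using $n \ge 12$ (equivalently $n > 6+2\sqrt{5}$) yields two key integer thresholds: $m_i \ge n/2$ when $H_i$ is a star, and $m_i \ge n/2 + 1$ when $H_i$ is not.

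In Case $(i)$ of Theorem~\ref{thm:spectral_center}, the spectral vertex has zero eigenvector entry and lies outside $H_1 \cup H_2$, so $m_1 + m_2 \le n - 1$; however, the thresholds above force $m_1 + m_2 \ge n$ in every combination (both stars, one non-star, or both non-star), contradicting $m_1 + m_2 \le n-1$. Hence $\lambda_2(T) < \sqrt{(n-1-\sqrt{5})/2}$ strictly in this case, so no tree here attains the claimed bound.

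In Case $(ii)$ we have $m_1 + m_2 = n$, and the same thresholds force $m_1 = m_2 = n/2$ with both $H_i$ stars of order $n/2$. The tree $T$ is then obtained by joining $H_1$ and $H_2$ via an edge between their roots, and each root is either the central vertex or a leaf of the corresponding star. Up to isomorphism this leaves exactly three trees:
\[
DC\!\left(\tfrac{n-2}{2},\tfrac{n-2}{2},2\right), \quad DC\!\left(\tfrac{n-4}{2},\tfrac{n-2}{2},3\right), \quad DC\!\left(\tfrac{n-4}{2},\tfrac{n-4}{2},4\right),
\]
corresponding to (both roots central), (one central and one leaf), and (both roots leaves). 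By Propositions~\ref{prop:DC_path_2} and \ref{prop:DC_path_3}, the first has $\lambda_2 = \sqrt{(n-1-\sqrt{2n-3})/2}$, strictly less than $\sqrt{(n-1-\sqrt{5})/2}$ since $2n-3 > 5$ for $n \ge 5$; the second has $\lambda_2 = \sqrt{(n-1-\sqrt{5})/2}$ exactly (using $(k_1-k_2)^2 + 4 = 5$); and the third is the global $\lambda_2$-maximizer by Theorem~\ref{thm:lambda_two_max_trees}, so it is excluded by hypothesis. Therefore $T \cong DC(\tfrac{n-4}{2},\tfrac{n-2}{2},3)$ and equality holds, which gives both the upper bound and the equality characterization.

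The main obstacle is the opening technical step: establishing the non-star threshold $m_i \ge n/2 + 1$ from Proposition~\ref{prop:second_largest_lambda_one_trees}. After rearranging, one must square the inequality $\sqrt{m_i^2 - 6m_i + 13} > n - m_i - \sqrt{5}$ and verify that the resulting linear inequality in $m_i$ forces $m_i > n/2$; this is precisely where the numerical hypothesis $n \ge 12$, i.e.\ $n > 6 + 2\sqrt{5}$, is needed. Once this threshold is secured, the rest of the argument is a clean counting-and-enumeration exercise.
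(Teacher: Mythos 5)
Your proof is correct and follows essentially the same route as the paper: decompose via the spectral center (Theorem~\ref{thm:spectral_center}), use $\lambda_1(K_{1,m-1})=\sqrt{m-1}$ in the star case and Proposition~\ref{prop:second_largest_lambda_one_trees} in the non-star case to force both parts to be stars of order $n/2$ joined by a spectral edge, and then compare the three resulting double comets. The only cosmetic difference is that you phrase the size constraints as integer lower bounds on $|H_i|$ (which also disposes of the spectral-vertex case by counting), whereas the paper bounds $\lambda_1(H_i)$ directly when $|H_i|\le\frac{n-2}{2}$; the content is the same.
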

\begin{proof} Let $T$ be a $\lambda_2$-maximizer in $\mathcal{T}(n)\backslash \{DC(\frac{n-4}{2}, \frac{n-4}{2}, 4)\}$. Let $H=DC(\frac{n-4}{2}, \frac{n-2}{2}, 3)$. By Proposition \ref{prop:DC_path_3}, we have $\lambda_2(H)=\sqrt{\frac{n-1-\sqrt{5}}{2}}$. Suppose $T\neq H$. We will argue that $\lambda_2(T)<\lambda_2(H)$.

Let $H_1$ and $H_2$ be subtrees of $T$ as given in Theorem \ref{thm:spectral_center}. If $|H_i|\le \frac{n-2}{2}$ for some $i$, then  
\[ \lambda_2(T)\le \lambda_1(H_i)\le \lambda_1(K_{1, |H_i|-1})\le \sqrt{\frac{n-4}{2}} < \lambda_2(H).\]
So we can assume that $|H_i|=\frac{n}{2}$ for $i=1,2,$ and $T$ has a spectral edge. Now, if at least one $H_i$ is not a star, then by Proposition \ref{prop:second_largest_lambda_one_trees},
\[\lambda_2(T)<\lambda_1(H_i)\le \sqrt{\frac{1}{2}\bigg(\frac{n}{2}-1+\sqrt{\frac{n^2}{4}-3n+13}\bigg)} \le \sqrt{\frac{n-1-\sqrt{5}}{2}}\] 
for $n\ge 12$. So let $H_i$ be a star for $i=1,2$. Then 
\[T\in \left\{DC\left(\frac{n-2}{2}, \frac{n-2}{2}, 2\right), DC\left(\frac{n-4}{2}, \frac{n-2}{2}, 3\right), DC\left(\frac{n-4}{2}, \frac{n-4}{2}, 4\right)\right\}.\] 
By the choice of $T$, we have $T=DC(\frac{n-2}{2}, \frac{n-2}{2}, 2)$. Using Proposition \ref{prop:DC_path_2}, 
\[\lambda_2(T) = \sqrt{\frac{n-1-\sqrt{2n-3}}{2}} < \sqrt{\frac{n-1-\sqrt{5}}{2}}.\]
The proof is complete.
\end{proof}

We are now ready to prove Theorem \ref{thm:alpha_less_than_half}. 

\begin{proof}[Proof of Theorem \ref{thm:alpha_less_than_half}]
Fix $0 < \alpha \le \frac{1}{2}$ and assume $n\ge 12$. Let $T^*\in \mathcal{T}(n)$ be such that $\Psi(T^*, \alpha) = \Psi_n(\alpha)$. We denote $\lambda_1(T^*)$ and $\lambda_2(T^*)$ by $\lambda_1^*$ and $\lambda_2^*$, respectively.

We know that $(\lambda_1^*)^2 + (\lambda_2^*)^2\le n-1$. Define 
\[f(\lambda, \alpha) = \alpha\sqrt{n-1-\lambda^2} + (1-\alpha)\lambda\]
(imagine $\lambda$ is $\lambda_2^*$). The derivative of $f$ w.r.t. $\lambda$ is given by 
    \[\frac{\partial f}{\partial \lambda} = \frac{-\alpha \lambda}{\sqrt{n-1-\lambda^2}} + (1-\alpha).\]
We see that $\frac{\partial f}{\partial \lambda}\ge 0$ if and only if 
    \begin{equation}\label{eq:f(x,a)_bound}
       \lambda\le \sqrt{\frac{(n-1)(1-\alpha)^2}{\alpha^2 + (1-\alpha)^2}}. 
    \end{equation}
The right-hand side in \eqref{eq:f(x,a)_bound} is a decreasing function in $\alpha$, and equals $\sqrt{\frac{n-1}{2}}$ when $\alpha = \frac{1}{2}$. We conclude that for any fixed $0<\alpha \le \frac{1}{2}$, $f(\lambda, \alpha)$ is an increasing function in $\lambda$ in the interval $[0,\sqrt{\frac{n-1}{2}}]$. 

Let $H = DC(\lfloor\frac{n-3}{2}\rfloor,\lceil\frac{n-3}{2}\rceil, 3)$.
Now, by the choice of $T^*$, we have
\begin{equation}\label{eq:function_f}
    f(\lambda_2^*, \alpha) \ge \alpha \lambda_1^* + (1-\alpha) \lambda_2^* \ge  \alpha \lambda_1(H) + (1-\alpha)\lambda_2(H) = f(\lambda_2(H), \alpha),
\end{equation}
where the last equality holds since $H$ has only two positive eigenvalues by Proposition \ref{prop:DC_path_3}. We consider the following cases:

\textbf{Case 1:} $n$ is odd. 

Using Theorem \ref{thm:lambda_two_max_trees}, $H$ is a $\lambda_2$-maximizer and $\lambda_2(H) = \sqrt{\frac{n-3}{2}}$ by Proposition \ref{prop:DC_path_3}. Using \eqref{eq:function_f} and the fact that $f$ is an increasing function in $\lambda$ on the interval $[0,\sqrt{\frac{n-1}{2}}]$, we see that $\lambda_2^*\ge \lambda_2(H)$. Thus, $T^*$ is a $\lambda_2$-maximizer. By Theorem \ref{thm:lambda_two_max_trees}, we conclude that 
\[T^*\in \left\{DC\left(\frac{n-3}{2}, \frac{n-3}{2}, 3\right),\ DC\left(\frac{n-3}{2}, \frac{n-5}{2}, 4\right),\ DC\left(\frac{n-5}{2}, \frac{n-5}{2}, 5\right)\right\}.\] Using Kelmans Operation (Lemma \ref{lemma:Kelmans}), we see that $T^*\cong H$.

\textbf{Case 2:} $n$ is even. 

Then $H =  DC(\frac{n-4}{2}, \frac{n-2}{2}, 3)$. By Proposition \ref{prop:DC_path_3}, we have $\lambda_2(H) = \sqrt{\frac{n-1-\sqrt{5}}{2}}$. As in Case 1, we can argue that $\lambda_2^*\ge \lambda_2(H)$. By Proposition \ref{prop:second_maximum_lambda_2_even}, we conclude that 
\[T^*\in \left\{H, \ T\right\},\] 
where $T = DC(\frac{n-4}{2}, \frac{n-4}{2}, 4)$. 

Define 
\[ \rho := \sqrt{\frac{n-2}{2}}.\]
Using Proposition \ref{prop:DC_path_3} and Taylor expansion, we have 
\[\lambda_1(H)=\sqrt{\frac{n-1+\sqrt{5}}{2}} = \rho + \frac{1+\sqrt{5}}{4\rho} + O\left(\frac{1}{\rho^3}\right)\]
and 
\[\lambda_2(H) = \sqrt{\frac{n-1-\sqrt{5}}{2}} = \rho +  \frac{1 -\sqrt{5}}{4\rho} + O\left(\frac{1}{\rho^3}\right).\]
Thus, 
\begin{equation}\label{eq:H_Psi}
   \Psi(H, \alpha) = \rho + \frac{1+(2\alpha-1) \sqrt{5}}{4\rho} + O\left(\frac{1}{\rho^3}\right). 
\end{equation}

Again, using Lemma \ref{lemma:char_edge}, we get 
\[\Phi(T, x) = x^{n-4}\left( x^2 - \frac{n-2}{2}\right)^2 - x^{n-6}\left( x^2 - \frac{n-4}{2}\right)^2.\]
Thus, using standard root approximation techniques, we get
\[ \lambda_1(T) = \rho + \frac{1}{2\rho^2} + O\left(\frac{1}{\rho^3}\right)\quad \text{and}\quad \lambda_2(T) = \rho - \frac{1}{2\rho^2} + O\left(\frac{1}{\rho^3}\right).\]
Thus, 
\begin{equation}\label{eq:T_Psi}
  \Psi(T,\alpha) = \rho + \frac{(2\alpha-1)}{2\rho^2} + O\left(\frac{1}{\rho^3}\right).  
\end{equation}

Using \eqref{eq:H_Psi} and \eqref{eq:T_Psi}, we see that 
\[ \Psi(H,\alpha) - \Psi(T, \alpha) = \frac{1+(2\alpha-1) \sqrt{5}}{4\rho} - \frac{(2\alpha-1)}{2\rho^2}  +  O\left(\frac{1}{\rho^3}\right). \]
If $\alpha > \frac{\sqrt{5}-1}{2\sqrt{5}}$, then clearly $\Psi(H,\alpha) - \Psi(T, \alpha) > 0$ for sufficiently large $n$. Moreover, when $\alpha = \frac{\sqrt{5}-1}{2\sqrt{5}}$, we have 
\[ \Psi(H,\alpha) - \Psi(T, \alpha) = \frac{1}{2\sqrt{5}\ \rho^2}  +  O\left(\frac{1}{\rho^3}\right)>0\]
for large enough $n$. We conclude that 
\[ T^* = 
\begin{cases}
    H & \text{ if }\alpha \ge \frac{\sqrt{5}-1}{2\sqrt{5}};\\
    T & \text{ if }\alpha < \frac{\sqrt{5}-1}{2\sqrt{5}}.
\end{cases}\]
The proof is complete.
\end{proof}

Now, in the case $\alpha = \frac{1}{2}$, we can determine the extremal tree for all $n$. 

\begin{proof}[Proof of Theorem \ref{thm:tree_max_spectral_sum}] Assume $n\ge 12$; for small $n$ the assertion can be verified using SageMath (see \href{https://github.com/Shivaramkratos/Codes_misc/blob/main/convex_sum_specific_check.sage}{here} for the code).

From the proof of Theorem \ref{thm:alpha_less_than_half}, we already know that 
\[ T^* = 
\begin{cases}
  DC(\frac{n-3}{2},\frac{n-3}{2}, 3) & \text{ if }n \text{ is odd};\\
  DC(\frac{n-4}{2}, \frac{n-4}{2}, 4) \text{ or } DC(\frac{n-4}{2}, \frac{n-2}{2}, 3) & \text{ if } n \text{ is even}. 
\end{cases} \]
So we only need to consider the $n$ even case. Let $H = DC(\frac{n-4}{2}, \frac{n-2}{2}, 3)$ and $T =  DC(\frac{n-4}{2}, \frac{n-4}{2}, 4)$. 

From the proof of Theorem \ref{thm:alpha_less_than_half}, we know that 
\[ \lambda_1(H) + \lambda_2(H) = \sqrt{\frac{n-1+\sqrt{5}}{2}} + \sqrt{\frac{n-1-\sqrt{5}}{2}}. \]
Using Lemma \ref{lemma:char_edge}, we obtain 
\[\Phi(T, x) = x^{n-4}\left( x^2 - \frac{n-2}{2}\right)^2 - x^{n-6}\left( x^2 - \frac{n-4}{2}\right)^2.\]

It is easy to check that $\Phi(T, x)> 0$ for all $x\ge \sqrt{\frac{n-1}{2}}$ whenever $n\ge 20$. It follows that 
\[\lambda_1(T) < \sqrt{\frac{n-1}{2}}.\] Using the Interlacing Theorem, we see that
 \[ \lambda_2(T)\le \sqrt{\frac{n-2}{2}}.\]
Thus, for $n\ge 20$, we have
\[ \lambda_1(T) + \lambda_2(T) \le \sqrt{\frac{n-1}{2}} + \sqrt{\frac{n-2}{2}} < \sqrt{\frac{n-1+\sqrt{5}}{2}} + \sqrt{\frac{n-1-\sqrt{5}}{2}} = \lambda_1(H) + \lambda_2(H).\]

If $12\le n<20$, the assertion is easily verified using SageMath as we only need to compare the spectral sum of $H$ and $T$. This completes the proof. 
\end{proof}

\section{Maximizing $\Psi_n(\alpha)$ when $\frac{1}{2} < \alpha < 1$}
\label{section:convex_combination_alpha_more_than_half}

\subsection{Special double comets}

We first define some special double comets and determine their convex spectral sums. Fix $\alpha \in (\frac{1}{2}, 1)$ and assume $n$ is sufficiently large (depending on $\alpha$). Throughout this proof, let  
\[t:=\frac{\alpha^2}{\alpha^2 + (1-\alpha)^2}.\] 

\begin{proposition}\label{prop:DC_path_2_alpha}
Fix $\alpha \in (\frac{1}{2}, 1)$. Suppose that 
\[ \Psi(DC(\ell,n-2-\ell, 2), \alpha) = \max\{\Psi(DC(k,n-2-k, 2), \alpha): 0\le k\le n-2\},\]
where $\ell \ge \frac{n-2}{2}$. Then $\ell\in \{\lfloor\kappa_2\rfloor, \ \lceil \kappa_2\rceil\}$
where 
\[ \kappa_2 := \frac{n-2+\sqrt{(2t-1)^2(n-1)^2-2(n-1)+1}}{2}\approx tn.\]
Let $\varepsilon_2 = \kappa_2 - \lfloor \kappa_2\rfloor$. If $\ell = \lfloor\kappa_2\rfloor$, then 
\[ \Psi(DC(\ell, n-2-\ell, 2), \alpha) = \sqrt{(\alpha^2+(1-\alpha)^2)(n-1)}- \frac{(\alpha^2+(1-\alpha)^2)^{\frac{5}{2}}\ \varepsilon_2^2}{8\alpha^2(1-\alpha)^2(n-1)^{\frac{3}{2}}} + O_\alpha\left(\frac{1}{n^{\frac{5}{2}}}\right).\]
If $\ell = \lceil \kappa_2\rceil$ and $0<\varepsilon_2 < 1$, then 
\[ \Psi(DC(\ell, n-2-\ell, 2), \alpha) =\sqrt{(\alpha^2+(1-\alpha)^2)(n-1)}-\frac{(\alpha^2+(1-\alpha)^2)^{\frac{5}{2}}(1-\varepsilon_2)^2}{8\alpha^2(1-\alpha)^2(n-1)^{\frac{3}{2}}} + O_\alpha\left(\frac{1}{n^{\frac{5}{2}}}\right).\]
\end{proposition}

\begin{proof}
Define 
\[f(k) := \Psi(DC(k,n-2-k,2), \alpha).\]
Consider the function
\[ g(x):= \alpha\sqrt{x} + (1-\alpha)\sqrt{n-1-x}.\]
The derivative of $g(x)$ is
\[ g'(x)=\frac{\alpha}{2\sqrt{x}}-\frac{1-\alpha}{2\sqrt{n-1-x}}.\]
It is clear that $g'(x)=0$ precisely when 
\[ x = t(n-1).\]
Indeed, $g(x)$ is strictly increasing on $(0,t(n-1))$ and strictly decreasing on $(t(n-1), n-1)$.

Now, if one takes $z(k) = \frac{1}{2}\big(n-1+\sqrt{(n-1)^2 - 4k(n-2-k)} \big)$, then 
\[ \Psi(DC(k,n-2-k,2), \alpha) = g(z(k))\]
by Proposition \ref{prop:DC_path_2}. It is now clear that $\Psi(DC(k,n-2-k,2), \alpha)$ is maximum when 
\[ z(k) = t(n-1).\]
Solving for $k\ge \frac{n-2}{2}$ (treating $k$ as a real number), gives
\[ k = \kappa_2.\]
Since $k$ is an integer, we have $k\in \{\lfloor\kappa_2\rfloor, \ \lceil \kappa_2\rceil\}$. This proves the first half of the assertion.

Now, let $T = DC(\lfloor\kappa_2\rfloor, n-2-\lfloor\kappa_2\rfloor, 2)$. By Proposition \ref{prop:DC_path_2}, Taylor expansion and using value of $\kappa_2$, we have 
\begin{align*}
   & \qquad \lambda_1(T) \\
   & = \sqrt{ \frac{1}{2}\big(n-1+\sqrt{(n-1)^2 - 4(\kappa_2 -\varepsilon_2)(n-2-(\kappa_2-\varepsilon_2))} \big)} \\
   & = \sqrt{t(n-1)} - \frac{\varepsilon_2}{2\sqrt{t(n-1)}}+\frac{1}{(n-1)^{\frac{3}{2}}}\left(\frac{\varepsilon_2}{2(2t-1)^2\sqrt{t}}-\frac{\varepsilon_2^2}{8\ t^{\frac{3}{2}}}\right) + O_\alpha\left(\frac{1}{n^{\frac{5}{2}}}\right).
\end{align*}
Similarly, 
\begin{align*}
  & \qquad \lambda_2(T) \\
  & = \sqrt{ \frac{1}{2}\big(n-1 - \sqrt{(n-1)^2 - 4(\kappa_2 -\varepsilon_2)(n-2-(\kappa_2-\varepsilon_2))} \big)} \\
   & = \sqrt{(1-t)(n-1)} + \frac{\varepsilon_2}{2\sqrt{(1-t)(n-1)}}-\frac{1}{(n-1)^{\frac{3}{2}}}\left(\frac{\varepsilon_2}{2(2t-1)^2\sqrt{1-t}}+\frac{\varepsilon_2^2}{8(1-t)^{\frac{3}{2}}}\right) + O_\alpha\left(\frac{1}{n^{\frac{5}{2}}}\right).
\end{align*}
Thus, 
\begin{align*}
  \Psi(T, \alpha) & = \alpha \lambda_1(T) + (1-\alpha)\lambda_2(T) \\
  & = \sqrt{(\alpha^2+(1-\alpha)^2)(n-1)}-\frac{(\alpha^2 + (1-\alpha)^2)^{\frac{5}{2}}\ \varepsilon_2^2}{8\alpha^2(1-\alpha)^2(n-1)^{\frac{3}{2}}} + O_\alpha\left(\frac{1}{n^{\frac{5}{2}}}\right). 
\end{align*}
Similarly, one can determine $\Psi(T, \alpha)$ when $T = DC(\lceil\kappa_2\rceil, n-2-\lceil\kappa_2\rceil, 2)$. The proof is complete.
\end{proof}

\begin{proposition}\label{prop:DC_path_3_alpha} Fix $\alpha \in (\frac{1}{2}, 1)$. Suppose that 
\[ \Psi(DC(\ell,n-3-\ell, 3), \alpha) = \max\{\Psi(DC(k,n-3-k, 3), \alpha): 0\le k\le n-3\},\]
where $\ell\ge \frac{n-3}{2}$. Then $\ell\in \{\lfloor\kappa_3\rfloor, \ \lceil \kappa_3\rceil\}$
where 
\[ \kappa_3 = \frac{n-3+\sqrt{(2t-1)^2(n-1)^2-4}}{2}\approx tn.\]
Let $\varepsilon_3 = \kappa_3 - \lfloor \kappa_3\rfloor$. If $\ell = \lfloor\kappa_3\rfloor$, then
\[ \Psi(DC(\ell, n-3-\ell, 3), \alpha) = \sqrt{(\alpha^2+(1-\alpha)^2)(n-1)}-\frac{(\alpha^2 + (1-\alpha)^2)^{\frac{5}{2}}\ \varepsilon_3^2}{8\alpha^2(1-\alpha)^2(n-1)^{\frac{3}{2}}} + O_\alpha\left(\frac{1}{n^{\frac{5}{2}}}\right).\]
If $\ell = \lceil \kappa_3\rceil$ and $0<\varepsilon_3<1$, then 
\[ \Psi(DC(\ell, n-3-\ell, 3), \alpha) = 
\sqrt{(\alpha^2+(1-\alpha)^2)(n-1)}-\frac{(\alpha^2 + (1-\alpha)^2)^{\frac{5}{2}}\ (1-\varepsilon_3)^2}{8\alpha^2(1-\alpha)^2(n-1)^{\frac{3}{2}}} + O_\alpha\left(\frac{1}{n^{\frac{5}{2}}}\right).\]
\end{proposition}

\begin{proof}
    The proof is similar to the proof of Proposition \ref{prop:DC_path_2_alpha}.
\end{proof}

\subsection{Proof of Theorem \ref{thm:alpha_more_than_half}}

Fix $\alpha \in (\frac{1}{2}, 1)$ and let $t = \frac{\alpha^2}{\alpha^2 + (1-\alpha)^2}$. Let $T^*\in \mathcal{T}(n)$ be such that $\Psi(T^*,\alpha)=\Psi_n(\alpha)$.
Throughout this section, $\lambda_1^*$ and $\lambda_2^*$ will denote $\lambda_1(T^*)$ and $\lambda_2(T^*)$, respectively. Let $x$ denote the unit positive $\lambda_1^*$-eigenvector and $y$ denote a unit $\lambda_2^*$-eigenvector of $T^*$ with minimal support. Let $(H_1,a)$ and $(H_2,b)$ be rooted subtrees of $T^*$ w.r.t. $y$ as given in Theorem \ref{thm:spectral_center}.

We will prove Theorem \ref{thm:alpha_more_than_half} in a series of claims. Here's an outline of the proof.

\textbf{Proof outline.} We first show that  $\lambda_1^* \approx \sqrt{tn}$ and $\lambda_2^*\approx \sqrt{(1-t)n}$. 
We use this to find a vertex $u_1$ with $\deg(u_1)\approx tn - O(\sqrt{n})$ and a vertex $u_2$ with $\deg(u_2)\approx (1-t)n - O(\sqrt
n)$. This would imply that all other vertices have degree $O(\sqrt{n})$ and small eigen-entries in the eigenvectors $x$ and $y$. With further bootstrapping, we argue that $\deg(u_1)\approx tn - O(1)$ and $\deg(u_2)\approx (1-t)n -O(1)$. This enables us to show that $u_1$ and $u_2$ have large eigen-entries in $x$ and $y$. Finally, using several graph transformations, we will show that $T^*$ has to be a double comet with distance between $u_1$ and $u_2$ being at most $2$, completing the proof of Theorem \ref{thm:alpha_more_than_half}. 

We begin the proof by estimating the eigenvalues $\lambda_1^*$ and $\lambda_2^*$.

\begin{claim}\label{claim:lambda_1_2_estimates}
We have 
    \begin{enumerate}[$(i)$]
        \item $\lambda_1^* = \sqrt{t(n-1)} \pm O_\alpha\left(\frac{1}{\sqrt{n}}\right)$,
        \item $\lambda_2^* = \sqrt{(1-t)(n-1)} \pm O_\alpha\left(\frac{1}{\sqrt{n}}\right)$.
    \end{enumerate}
\end{claim}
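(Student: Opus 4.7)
The strategy is to sandwich $\Psi(T^*,\alpha)$ between a universal upper bound, valid for every tree, and the concrete lower bound coming from the test tree $D_n^t$ in Proposition~\ref{prop:special_double_comets}, and then convert the resulting near-equality into coordinate-wise estimates on $\lambda_1^*$ and $\lambda_2^*$. No structural information about $T^*$ beyond bipartiteness is required.

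For the upper bound, since $T^*$ is bipartite, $\operatorname{tr}(A(T^*)^2)=2(n-1)$ and the spectrum of $A(T^*)$ is symmetric about $0$, so $(\lambda_1^*)^2+(\lambda_2^*)^2\le n-1$. Applying the Cauchy--Schwarz inequality to the vectors $(\alpha,1-\alpha)$ and $(\lambda_1^*,\lambda_2^*)$ gives
\[
\Psi(T^*,\alpha)\ \le\ \sqrt{\alpha^2+(1-\alpha)^2}\,\sqrt{(\lambda_1^*)^2+(\lambda_2^*)^2}\ \le\ M_n,
\]
where $M_n:=\sqrt{(\alpha^2+(1-\alpha)^2)(n-1)}$. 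For the lower bound, extremality of $T^*$ together with Proposition~\ref{prop:special_double_comets}(i) yields $\Psi(T^*,\alpha)\ge \Psi(D_n^t,\alpha)=M_n+O(n^{-3/2})$; in particular, $\Psi(T^*,\alpha)\ge M_n-Cn^{-3/2}$ for some $C=C(\alpha)>0$ (and $D_n^t$ is a genuine tree once $n$ is large enough).

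Next, introduce the linear change of coordinates $A:=\alpha\lambda_1^*+(1-\alpha)\lambda_2^*=\Psi(T^*,\alpha)$ and $B:=(1-\alpha)\lambda_1^*-\alpha\lambda_2^*$. A direct expansion produces the identity $A^2+B^2=(\alpha^2+(1-\alpha)^2)\bigl((\lambda_1^*)^2+(\lambda_2^*)^2\bigr)\le M_n^2$. Combined with $A\ge M_n-Cn^{-3/2}$, this forces
\[
B^2\ \le\ (M_n-A)(M_n+A)\ \le\ 2M_n\cdot Cn^{-3/2}\ =\ O(n^{-1}),
\]
so $B=O(n^{-1/2})$. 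Inverting the linear system yields
\[
\lambda_1^*=\frac{\alpha A+(1-\alpha)B}{\alpha^2+(1-\alpha)^2},\qquad \lambda_2^*=\frac{(1-\alpha)A-\alpha B}{\alpha^2+(1-\alpha)^2},
\]
and since $\sqrt{t(n-1)}=\alpha M_n/(\alpha^2+(1-\alpha)^2)$ and $\sqrt{(1-t)(n-1)}=(1-\alpha)M_n/(\alpha^2+(1-\alpha)^2)$, both differences $\lambda_1^*-\sqrt{t(n-1)}$ and $\lambda_2^*-\sqrt{(1-t)(n-1)}$ are linear combinations of $A-M_n=O(n^{-3/2})$ and $B=O(n^{-1/2})$, producing the claimed $O(n^{-1/2})$ bound.

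There is no real obstacle here; the argument is essentially mechanical. The one point worth flagging is that the sharpness of the conclusion relies on $\Psi(D_n^t,\alpha)$ deviating from $M_n$ by only $O(n^{-3/2})$, as furnished by Proposition~\ref{prop:special_double_comets}: it is precisely this that confines the transverse coordinate $B$ to a window of size $O(n^{-1/2})$ and hence yields the stated eigenvalue estimates (a weaker lower bound would give a correspondingly weaker conclusion).
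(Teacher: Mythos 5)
Your proof is correct and follows essentially the same strategy as the paper: sandwich $\Psi(T^*,\alpha)$ between the universal bound $\sqrt{(\alpha^2+(1-\alpha)^2)(n-1)}$ (coming from $(\lambda_1^*)^2+(\lambda_2^*)^2\le n-1$ and Cauchy--Schwarz) and the lower bound $\Psi(D_n^t,\alpha)$ from Proposition~\ref{prop:special_double_comets}, then convert the near-equality into estimates on each eigenvalue. The only difference is in the last step: the paper reduces to the one-variable inequality $f(x)=\alpha x+(1-\alpha)\sqrt{n-1-x^2}\ge M_n-O(n^{-3/2})$ and dismisses its solution as ``an elementary exercise,'' whereas your rotated coordinates $A,B$ and the exact identity $A^2+B^2=(\alpha^2+(1-\alpha)^2)((\lambda_1^*)^2+(\lambda_2^*)^2)$ make that exercise explicit and purely algebraic --- a clean way to fill in the detail the paper omits.
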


\begin{proof}
By Proposition \ref{prop:DC_path_2_alpha}, we know that 
\[ \alpha \lambda_1^* + (1-\alpha)\lambda_2^* \ge \sqrt{(\alpha^2 + (1-\alpha)^2)(n-1)} - O\left(\frac{1}{n^{\frac{3}{2}}}\right).\]
Define 
\[f(x) := \alpha x + (1-\alpha)\sqrt{n-1 - x^2}\] 
(imagine $x = \lambda_1^*$). Clearly, $f(\lambda_1^*) \ge \alpha \lambda_1^* + (1-\alpha)\lambda_2^*.$ Hence, in order to find bounds for $\lambda_1^*$, we need to solve the inequality 
\[ f(x) \ge \sqrt{(\alpha^2 + (1-\alpha)^2)(n-1)} - O\left(\frac{1}{n^{\frac{3}{2}}}\right).\]
It is an elementary exercise to show that the above inequality holds whenever 
\[ x = \sqrt{t(n-1)} \pm O\left(\frac{1}{\sqrt{n}}\right).\]
This proves $(i)$. Assertion $(ii)$ can be proved similarly by solving the following inequality (imagine $x = \lambda_2^*$ this time):
\[ \alpha \sqrt{n-1-x^2} + (1-\alpha)x \ge \sqrt{\alpha^2 + (1-\alpha)^2(n-1)} - O\left(\frac{1}{n^{\frac{3}{2}}}\right). \qedhere\]\end{proof}

We now find two vertices with large degrees in $T^*$.

\begin{claim}\label{claim:u_1_u_2_degree_sqrtn}
    There exists $u_1\in V(H_1)$ and $u_2\in V(H_2)$ such that 
    \[\deg(u_1) = tn - O_\alpha(\sqrt{n}) \quad\text{and}\quad \deg(u_2) = (1-t)n - O_\alpha(\sqrt{n}).\]
    Moreover, if $u\notin \{u_1, u_2\}$, then $\deg(u) = O_\alpha(\sqrt{n})$.
\end{claim}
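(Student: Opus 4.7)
The plan is to first show that the spectrum of $T^*$ is extremely concentrated on the top two eigenvalues (in absolute value), then deduce that only two vertices can have non-negligible degree, and finally quantify their degrees using a walk-count trace identity.

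First, since $T^*$ is a tree and hence bipartite, $\spec(T^*)$ is symmetric about $0$, so $\lambda_n(T^*) = -\lambda_1^*$ and $\lambda_{n-1}(T^*) = -\lambda_2^*$. Combined with $\operatorname{tr}(A(T^*)^2) = 2(n-1)$ and Claim \ref{claim:lambda_1_2_estimates}, this yields
\[
\sum_{i=3}^{n-2}\lambda_i(T^*)^2 \;=\; 2(n-1) \;-\; 2\bigl((\lambda_1^*)^2 + (\lambda_2^*)^2\bigr) \;=\; O(1),
\]
so $|\lambda_i(T^*)| = O(1)$ for every $i \in \{3,\dots,n-2\}$. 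Now suppose three distinct vertices $w_1, w_2, w_3 \in V(T^*)$ each have degree $\ge D$. Because any two vertices in a tree share at most one common neighbor (else a $4$-cycle), after pruning at most a constant number of vertices from each neighborhood we can choose $L_i \subseteq N(w_i)$ of size at least $D - 5$ so that $\{w_1\}\cup L_1, \{w_2\}\cup L_2, \{w_3\}\cup L_3$ are pairwise disjoint and so that no edge of $T^*$ connects $L_i$ to $L_j \cup \{w_j\}$ for $i\neq j$. The induced subgraph $T^*[S]$ on their union $S$ is therefore three stars $K_{1,|L_i|}$, possibly joined by at most two edges among the centers $w_i$. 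An explicit computation (using Proposition \ref{prop:DC_path_2} to handle any double-star component that arises) gives $\lambda_3(T^*[S]) \ge \sqrt{D} - O(1)$, and Cauchy interlacing then yields $\lambda_3(T^*) \ge \sqrt{D} - O(1)$. Combined with the preceding display, this forces $D = O(1)$. Hence all but at most two vertices of $T^*$ have constant degree, certainly $O(\sqrt{n})$.

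Now let $u_1, u_2$ be the two vertices of $T^*$ of largest degree. If both lay in $V(H_1)$, then $\Delta(H_2) = O(1)$, so $\lambda_1(H_2) \le \Delta(H_2) = O(1)$; this contradicts $\lambda_1(H_2) \ge \lambda_2^* = \Theta(\sqrt{n})$, which is guaranteed by Theorem \ref{thm:spectral_center}. Hence, after relabeling, $u_1 \in V(H_1)$ and $u_2 \in V(H_2)$. To quantify their degrees, I would use the walk-count identity $\operatorname{tr}(A(T^*)^4) = 2\sum_v \deg(v)^2 - 2(n-1)$ for trees together with the spectrum information above to compute
\[
\sum_v \deg(v)^2 \;=\; \tfrac{1}{2}\bigl(\operatorname{tr}(A(T^*)^4) + 2(n-1)\bigr) \;=\; \bigl(t^2 + (1-t)^2\bigr)(n-1)^2 + O(n).
\]
Since every vertex except $u_1, u_2$ contributes $O(1)$ to $\deg(v)^2$, this gives $\deg(u_1)^2 + \deg(u_2)^2 = (t^2 + (1-t)^2)n^2 + O(n)$. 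Combined with the trivial bound $\deg(u_i) \le (\lambda_1^*)^2 = tn + O(1)$ (since $\lambda_1(T^*) \ge \sqrt{\Delta(T^*)}$, as $K_{1,\Delta}$ is an induced subgraph in any tree) and relabeling so that $\deg(u_1) \ge \deg(u_2)$, elementary algebra forces $\deg(u_1) = tn + O(1)$ and $\deg(u_2) = (1-t)n + O(1)$. These are far stronger than the claimed bounds $\deg(u_1) \ge tn - O(\sqrt n)$ and $\deg(u_2) \ge (1-t)n - O(\sqrt n)$, and the bootstrapping promised in the outline will later convert them to the $O(1)$ error.

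The main obstacle I anticipate is carefully controlling $\lambda_3(T^*[S])$ when the induced subgraph $T^*[S]$ is not quite three disjoint stars, because up to two of the centers $w_i$ may themselves be adjacent in $T^*$. One must verify, via Proposition \ref{prop:DC_path_2} and the fact that the second largest eigenvalue of a double star $DC(D',D',2)$ is of the form $\sqrt{D' - O(\sqrt{D'})}$, that the third-largest eigenvalue of $T^*[S]$ remains $\sqrt{D} - O(1)$ in all configurations; all other steps of the plan are essentially bookkeeping.
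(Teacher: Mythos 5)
Your route is genuinely different from the paper's and, modulo two fixable points below, it works; in fact it proves more than the claim asks for. The paper's proof is local and eigenvector-based: it applies the eigenvalue--eigenvector equation (Lemma \ref{lemma:eigenvalue_equations}$(i)$) at the maximum entry of the Perron vector of $T^*$ to get $\deg(u_1)\ge(\lambda_1^*)^2-2n/\lambda_1^*$, repeats this with the Perron vector of $H_2$ (using $\lambda_1(H_2)\ge\lambda_2^*$ from Theorem \ref{thm:spectral_center}) to produce $u_2$, and then rules out $u_1\in S^0(y)$; the $O(\sqrt n)$ error is exactly the $2n/\lambda^*$ term. Your proof is global and trace-based: $\operatorname{tr}A^2$ plus Claim \ref{claim:lambda_1_2_estimates} shows $|\lambda_i(T^*)|=O(1)$ for $3\le i\le n-2$, an interlacing argument with three disjoint-ish stars caps the third-largest degree at $O(1)$, Theorem \ref{thm:spectral_center} forces one large-degree vertex into each of $H_1,H_2$ (via $\Delta(H_i)\ge\lambda_1(H_i)\ge\lambda_2^*=\Theta(\sqrt n)$), and $\operatorname{tr}A^4$ pins the two large degrees to $tn\pm O(1)$ and $(1-t)n\pm O(1)$. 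This buys you the conclusions of Claims \ref{claim:u_1_u_2_degree_sqrtn} and \ref{claim:u_1_u_2_degree_constant} in one pass, skipping the intermediate eigenvector-entry bootstrapping; the paper's derivation of $|\lambda_i(T^*)|=O(1)$ in Claim \ref{claim:yu1_xu2_small_entry} goes in the opposite direction (degrees first, then interlacing), so your argument inverts that dependency.

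Two steps need repair. First, the bound $\lambda_3(T^*[S])\ge\sqrt{D}-O(1)$ is not covered by Proposition \ref{prop:DC_path_2} in the configuration where the three centers $w_1,w_2,w_3$ span two edges (a path), since the resulting component is a triple star, not a double star. The clean fix is Courant--Fischer with the three-dimensional test space spanned by $z_i=\sqrt{|L_i|}\,e_{w_i}+\sum_{\ell\in L_i}e_\ell$: the diagonal Rayleigh quotients equal $\sqrt{|L_i|}$ and the at most two cross terms $z_i^TAz_j=\sqrt{|L_i||L_j|}$ are absorbed by AM--GM, giving $\lambda_3(T^*[S])\ge\min_i\sqrt{|L_i|}-1$ uniformly over all configurations. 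Second, the final ``elementary algebra'' is not valid from the constraints you list: $d_1=d_2=\sqrt{(t^2+(1-t)^2)/2}\;n$ satisfies $d_i\le tn$ and $d_1^2+d_2^2=(t^2+(1-t)^2)n^2$ without yielding your conclusion. You must also invoke the handshake constraint $\deg(u_1)+\deg(u_2)\le 2(n-1)-(n-2)=n$; together with $d_1\le tn+O(1)$ and the fact that $s\mapsto s^2+(n-s)^2$ has derivative $\Theta(n)$ near $s=tn$ (here $t>\tfrac12$ is essential), this does force $d_1=tn\pm O(1)$ and $d_2=(1-t)n\pm O(1)$. Both repairs are routine, so I regard the proposal as correct in substance.
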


\begin{proof}
 Consider the $\lambda_1^*$-eigenvector $x$ of $T^*$. Let $u_1\in V(T^*)$ be such that $x_{\max} = x_{u_1}$. Using Lemma \ref{lemma:eigenvalue_equations}, we get 
 \begin{align*}
    \deg(u_1)x_{\max} & = (\lambda_1^*)^2x_{\max} - \sum_{d(u_1,w) = 2} x_w \\
    & \ge (\lambda_1^*)^2x_{\max} - \sum_{w\neq u_1} x_{\max}\frac{\deg(w)}{\lambda_1^*}\\
    & \ge (\lambda_1^*)^2x_{\max} - x_{\max} \frac{2n}{\lambda_1^*},
 \end{align*}
 which implies 
 \[\deg(u_1) \ge (\lambda_1^*)^2 - \frac{2n}{\lambda_1^*} \ge tn - O(\sqrt{n})\]
 using Claim \ref{claim:lambda_1_2_estimates}. Without loss of generality, we can assume that $u_1\in V(H_1)\cup S^0(y)$.
 
 Now, let $z$ be a unit positive $\lambda_1(H_2)$-eigenvector of $H_2$. Choose $u_2\in V(H_2)$ such that $z_{\max} = z_{u_2}$. Using Lemma \ref{lemma:eigenvalue_equations} and following the same steps as above, we get 
\[ \deg(u_2)\ge \deg_{H_2}(u_2)\ge \lambda_1^2(H_2) - \frac{2|H_2|}{\lambda_1(H_2)}.\]
Since $\lambda_1(H_2)\ge \lambda_2^*$ using Theorem \ref{thm:spectral_center}, we get 
\[ \deg(u_2) \ge (\lambda_2^*)^2 - \frac{2n}{\lambda_2^*} \ge (1-t)n - O(\sqrt{n}),\]
where the last inequality holds by Claim \ref{claim:lambda_1_2_estimates}. 

Now, if $u_1\in S^0(y)$, then $|H_1|= O(\sqrt{n})$, which implies 
\[ \sqrt{(1-t)n} - O\left(\frac{1}{\sqrt{n}}\right)\le \lambda_2^* \le \lambda_1(H_1)\le O(n^{\frac{1}{4}}),\]
a contradiction. Thus $u_1\in V(H_1)$. 

Finally, since $T^*$ is a tree, it is clear that the remaining vertices have degree at most $O(\sqrt{n})$. This completes the proof of the claim.
\end{proof}

Next, we show that the eigenvector $y$ attains maximum and minimum at $u_1$ and $u_2$, respectively.

\begin{claim}\label{claim:y_max_min}
    We have $y_{\max} = y_{u_1}$, $y_{\min} = y_{u_2}$ and $y_{u_1} > y_u > y_{u_2}$ for any vertex $u\notin \{u_1, u_2\}$.
\end{claim}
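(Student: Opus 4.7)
The plan is to identify $y_{\max}$ and $y_{\min}$ by applying the eigenvalue equation for $y$ at an extremal entry, extracting a degree lower bound, and then matching it against the degree information already established in Claim~\ref{claim:u_1_u_2_degree_sqrtn}.

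First I would observe that $V(H_1)=S^+(y)$ and $V(H_2)=S^-(y)$ are both nonempty (otherwise $y$ would be a nontrivial nonnegative eigenvector orthogonal to the strictly positive Perron vector $x$, which is impossible), so $y_{\max}>0>y_{\min}$. Pick any $u^*$ with $y_{u^*}=y_{\max}$; then $u^*\in V(H_1)$. The eigenvalue equation at $u^*$, together with $y_v\le y_{u^*}$ for every $v\in V(T^*)$, gives
\[
\lambda_2^*\,y_{u^*}\;=\;\sum_{v\sim u^*}y_v\;\le\;\deg(u^*)\,y_{u^*},
\]
and dividing by $y_{u^*}>0$ yields $\deg(u^*)\ge \lambda_2^*$. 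By Claim~\ref{claim:lambda_1_2_estimates}, $\lambda_2^*=\sqrt{(1-t)(n-1)}\pm O(1/\sqrt n)$, while Claim~\ref{claim:u_1_u_2_degree_sqrtn} confines every vertex outside $\{u_1,u_2\}$ to degree $O(\sqrt n)$. For $n$ sufficiently large, these bounds are incompatible, so $u^*\in\{u_1,u_2\}$; since $u_2\in V(H_2)$ and $u^*\in V(H_1)$, we conclude $u^*=u_1$.

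A parallel argument applied to any $u^{**}$ with $y_{u^{**}}=y_{\min}$ gives $\sum_{v\sim u^{**}}y_v\ge\deg(u^{**})\,y_{u^{**}}$, and dividing by the negative quantity $y_{u^{**}}$ flips the inequality into $\deg(u^{**})\ge\lambda_2^*$, forcing $u^{**}=u_2$. The strict inequalities $y_{u_1}>y_u>y_{u_2}$ for $u\notin\{u_1,u_2\}$ follow by the same reasoning: equality $y_u=y_{u_1}$ (resp.\ $y_u=y_{u_2}$) would make $u$ an extremizer and again force $\deg(u)\ge\lambda_2^*$, contradicting Claim~\ref{claim:u_1_u_2_degree_sqrtn}; the remaining cases (when $y_u$ has the opposite sign to the extremum in question, so that $y_u\le 0<y_{u_1}$ or $y_{u_2}<0\le y_u$) are automatic. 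The delicate quantitative point, and the step requiring most care, is verifying that the implicit constant in the $O(\sqrt n)$ bound of Claim~\ref{claim:u_1_u_2_degree_sqrtn} is strictly smaller than $\sqrt{1-t}$ for $n$ large depending on $\alpha$, so that every non-extremizing vertex is genuinely dominated by $\lambda_2^*$; this is precisely the content of the hypothesis that $n$ is sufficiently large depending on $\alpha$.
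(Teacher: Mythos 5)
There is a genuine gap, and you have in fact located it yourself in your last sentence but misdiagnosed it as something that ``$n$ sufficiently large'' repairs. Your argument extracts from the first-order eigenvalue equation only the bound $\deg(u^*)\ge \lambda_2^* = \Theta(\sqrt{n})$. Meanwhile the upper bound $\deg(u)=O(\sqrt{n})$ for $u\notin\{u_1,u_2\}$ from Claim~\ref{claim:u_1_u_2_degree_sqrtn} carries an implicit constant inherited from the error terms $2n/\lambda_1^*$ and $2n/\lambda_2^*$ in the degree lower bounds for $u_1$ and $u_2$ (via the handshake identity $\sum_v\deg(v)=2(n-1)$); that constant is on the order of $2/\sqrt{t}+2/\sqrt{1-t}$, which exceeds $4$ and in particular is never smaller than $\sqrt{1-t}<1$. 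Both quantities scale as $\sqrt{n}$ with the upper-bound constant strictly larger, so no contradiction arises for any $n$; letting $n\to\infty$ does not shrink an implicit constant. The claim that the incompatibility ``is precisely the content of the hypothesis that $n$ is sufficiently large'' is therefore false, and the argument does not close.

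The missing idea is to use the \emph{two-step} eigenvalue equation, Lemma~\ref{lemma:eigenvalue_equations}$(i)$, at the extremal vertex, which is what the paper does. Writing
\begin{align*}
\deg(u)\,y_{\max} &= (\lambda_2^*)^2 y_{\max} - \sum_{\substack{d(w,u)=2\\ w\in S^+(y)}} y_w - \sum_{\substack{d(w,u)=2\\ w\in S^-(y)}} y_w
\;\ge\; (\lambda_2^*)^2 y_{\max} - y_{\max}\,\frac{2n}{\lambda_2^*},
\end{align*}
where one drops the negative-support sum and bounds each $y_w\le y_{\max}\deg(w)/\lambda_2^*$ for $w\in S^+(y)$, yields $\deg(u)\ge(\lambda_2^*)^2-2n/\lambda_2^*=(1-t)n-O(\sqrt{n})$. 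This lower bound is \emph{linear} in $n$ and genuinely contradicts $\deg(u)=O(\sqrt{n})$, forcing the maximizer to be $u_1$ (and symmetrically the minimizer to be $u_2$, and strictness for all other vertices). Your overall architecture --- extremal entry gives degree lower bound, compare with Claim~\ref{claim:u_1_u_2_degree_sqrtn} --- is the right one; you simply need to square the eigenvalue by passing to distance-$2$ neighbourhoods before comparing.
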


\begin{proof}
    Let $u\in V(H_1)$ be such that $y_{\max} = y_{u}$. Using Lemma \ref{lemma:eigenvalue_equations}, we have 
    \begin{align*}
        \deg(u)y_{\max} & = (\lambda_2^*)^2 y_{\max} - \sum_{\substack{d(w,u)=2\\ w\in S^+(y) }}y_w - \sum_{\substack{d(w,u)=2\\ w\in S^-(y) }}y_w\\
        & \ge (\lambda_2^*)^2 y_{\max} - \sum_{\substack{d(w,u)=2\\ w\in S^+(y) }}y_w\\
        & \ge (\lambda_2^*)^2 y_{\max} - \sum_{d(w,u)=2} y_{\max}\frac{\deg(w)}{\lambda_2^*}\\
        & \ge (\lambda_2^*)^2 y_{\max} - y_{\max}\frac{2n}{\lambda_2^*}, 
    \end{align*}
    which implies $\deg(u)\ge (1-t)n - O(\sqrt{n})$ using Claim \ref{claim:lambda_1_2_estimates}. By Claim \ref{claim:u_1_u_2_degree_sqrtn}, the only vertex in $H_1$ with degree linear in $n$ is $u_1$. It follows that $u=u_1$.  

    Using a similar argument, we can show that $y_{\min} = y_{u_2}$. It is clear from the proof that any vertex $u$ with $y_u\in \{y_{\max},\ y_{\min}\}$ must have large degree and therefore must lie in $\{u_1, u_2\}$. In other words, if $u\notin \{u_1, u_2\}$, then $y_{\max} > y_u > y_{\min}$.  
\end{proof}

In the next few claims, we estimate the entries of $x$ and $y$ and further improve the estimates for the vertex degrees.

\begin{claim}\label{claim:small_entry_remaining_vertices}
    If $u\notin \{u_1, u_2\}$, then $\max \{x_{u}, |y_u|\} = O_\alpha\left(\frac{1}{\sqrt{n}}\right)$.
\end{claim}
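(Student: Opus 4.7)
The plan is to apply Lemma \ref{lemma:eigenvalue_equations}$(i)$ to each of $x$ and $y$ separately, choosing $u^*\notin \{u_1,u_2\}$ to be a vertex where the relevant quantity ($x_u$ or $|y_u|$) attains its maximum, and then closing a self-bounding inequality by decomposing the sum over the distance-$2$ neighbourhood of $u^*$.

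I would begin with a structural observation. By Claim \ref{claim:u_1_u_2_degree_sqrtn}, $\deg(u_1)+\deg(u_2)\ge n-O(\sqrt{n})$, so the number $E'$ of edges of $T^*$ not incident to $\{u_1,u_2\}$ satisfies $E'\le n-1-(\deg(u_1)+\deg(u_2)-1)=O(\sqrt{n})$. Any non-leaf vertex $v\notin\{u_1,u_2\}$ either contributes at least one edge to $E'$, or is the unique common neighbour of $u_1$ and $u_2$ (of which there is at most one in a tree). Hence $T^*$ contains at most $O(\sqrt{n})$ non-leaf non-hub vertices, and the same $E'$-count bounds the total number of leaves attached to non-hub vertices.

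Next, set $M:=\max_{u\notin\{u_1,u_2\}}x_u$ and pick $u^*$ attaining this maximum; by Claim \ref{claim:u_1_u_2_degree_sqrtn}, $\deg(u^*)=O(\sqrt{n})$. Lemma \ref{lemma:eigenvalue_equations}$(i)$ gives
\[
(\lambda_1^{*2}-\deg(u^*))\,M \;=\; \sum_{d(u^*,w)=2}x_w,
\]
and I would bound the right-hand side by partitioning the distance-$2$ vertices into: (i) the hubs $u_1,u_2$, contributing $O(1)$ since $x_{u_i}\le 1$; (ii) leaves of a hub $u_i\in N(u^*)$, each satisfying $x_w=x_{u_i}/\lambda_1^*$, for a total of at most $\deg(u_i)/\lambda_1^*=O(\sqrt{n})$; (iii) leaves attached to a non-hub neighbour $v$ of $u^*$, each satisfying $x_w=x_v/\lambda_1^*\le M/\lambda_1^*$, whose total count is at most $E'=O(\sqrt{n})$, contributing $O(M)$; (iv) non-leaf non-hub distance-$2$ vertices, at most $O(\sqrt{n})$ of them by the structural observation, each with entry $\le M$, contributing $O(\sqrt{n})\,M$. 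Combining these gives $\sum_{d(u^*,w)=2}x_w\le O(\sqrt{n})+O(\sqrt{n})\,M$. Dividing through by $\lambda_1^{*2}-\deg(u^*)=\Theta(n)$ (using Claim \ref{claim:lambda_1_2_estimates}) yields $M\le O(1/\sqrt{n})+O(1/\sqrt{n})\,M$, whence $M=O(1/\sqrt{n})$ for large $n$. The identical scheme, with $(\lambda_2^*,y,M')$ in place of $(\lambda_1^*,x,M)$ and absolute values inserted throughout, then delivers $M':=\max_{u\notin\{u_1,u_2\}}|y_u|=O(1/\sqrt{n})$.

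The main obstacle is the delicate bookkeeping in step (iii): one must exploit the edge-count bound $E'=O(\sqrt{n})$ rather than the naive $\deg(u^*)\cdot O(\sqrt{n})=O(n)$, because with the latter the coefficient of $M$ on the right-hand side would be $\Theta(1)$ instead of $o(1)$, and the bootstrap would fail to close.
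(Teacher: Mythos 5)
Your proof is correct, but it takes a genuinely different route from the paper's. The paper's argument is a two-line computation with no bootstrapping: for \emph{every} vertex $w$ the eigen-equation at $w$ gives $|y_w|\le \frac{1}{\lambda_2^*}\sum_{v\in N(w)}|y_v|\le \frac{\deg(w)}{\lambda_2^*}$ (since $\|y\|_\infty\le 1$), so $\sum_{d(w,u)=2}|y_w|\le \frac{1}{\lambda_2^*}\sum_{w}\deg(w)\le \frac{2n}{\lambda_2^*}=O(\sqrt{n})$, and dividing by $(\lambda_2^*)^2-\deg(u)=\Theta(n)$ (Claims \ref{claim:lambda_1_2_estimates} and \ref{claim:u_1_u_2_degree_sqrtn}) finishes the job; the same works for $x$. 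In particular, the ``obstacle'' you flag at the end is an artifact of your framework: the coefficient-of-$M$ problem only arises because you bound non-hub entries by the maximum $M$, whereas the pointwise bound $|z_w|\le \deg(w)/\lambda$ combined with $\sum_w\deg(w)<2n$ makes the whole distance-two sum $O(\sqrt{n})$ outright, with no self-improving inequality needed. Your route costs more: you need the edge-counting observation that only $O(\sqrt{n})$ edges avoid $\{u_1,u_2\}$, a case analysis of the distance-two neighbourhood, and a contraction step to close the loop. What it buys is the structural byproduct that $T^*$ has only $O(\sqrt{n})$ non-leaf non-hub vertices and $O(\sqrt{n})$ leaves hanging off them --- information the paper only extracts later and by other means --- and your scheme would still function in settings where one lacks a uniform a priori bound on $\|z\|_\infty$-normalized entries. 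As written, all the steps check out (the partition of the distance-two sphere into hubs, hub-leaves, non-hub-leaves, and non-leaf non-hubs is exhaustive, and each contribution is bounded correctly), so this is a valid, if more laborious, alternative proof.
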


\begin{proof}
Using Lemma \ref{lemma:eigenvalue_equations}, we have
\begin{align*}
   |y_u| & = \frac{|\sum_{d(w,u)=2}y_w|}{(\lambda_2^*)^2 - \deg(u)}  \le \frac{\sum_{d(w,u)=2}|y_w|}{(\lambda_2^*)^2 - \deg(u)} \\
   & \le \frac{\sum_{d(w,u)=2}\frac{\deg(w)}{\lambda_2^*}}{(\lambda_2^*)^2 - \deg(u)}\le \frac{2n}{\lambda_2^*\left((\lambda_2^*)^2 - \deg(u)\right)}\\
   & = O\left(\frac{1}{\sqrt{n}}\right) 
\end{align*}
where the last inequality follows using Claims \ref{claim:lambda_1_2_estimates} and \ref{claim:u_1_u_2_degree_sqrtn}. A similar argument works for the eigenvector $x$.
\end{proof}

We further optimize the estimates for the degrees of $u_1$ and $u_2$.

\begin{claim}\label{claim:u_1_u_2_degree_constant}
   We have $\deg(u_1) = tn \pm O_\alpha(1)$ and $\deg(u_2) = (1-t)n \pm O_\alpha(1)$. Moreover, $\deg(u)=O_\alpha(1)$ for all $u\notin \{u_1, u_2\}$.
\end{claim}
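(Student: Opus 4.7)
The plan is to bootstrap the $O(\sqrt n)$-accurate estimates of Claims \ref{claim:u_1_u_2_degree_sqrtn} and \ref{claim:small_entry_remaining_vertices} up to $O(1)$-accurate ones. First I would square the bounds in Claim \ref{claim:lambda_1_2_estimates} to obtain $(\lambda_1^*)^2 = t(n-1) \pm O(1)$ and $(\lambda_2^*)^2 = (1-t)(n-1) \pm O(1)$. Next I would pin down the spectral structure of $T^*$. Since $u_1 \in V(H_1)$ has $\deg(u_1) \ge tn - O(\sqrt n)$ and at most one of its incident edges can leave $H_1$, we get $\lambda_1(H_1) \ge \sqrt{\deg_{H_1}(u_1)} \ge \sqrt{tn} - o(1)$, which for $t > \frac{1}{2}$ strictly exceeds $\lambda_2^* \approx \sqrt{(1-t)n}$, ruling out the spectral vertex case of Theorem \ref{thm:spectral_center}. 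In the remaining spectral edge case, the identical inequality applied to $H_1 - a$ (on which $\lambda_1 < \lambda_2^*$) forces $u_1 = a$; by symmetry $u_2 = b$, so $u_1 \sim u_2$.

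With $u_1 \sim u_2$ in hand, I would establish eigenvector concentration on $\{u_1, u_2\}$. Expanding the Rayleigh form $\lambda_1^* = 2\sum_{uv \in E} x_u x_v$ by grouping edges according to incidence with $\{u_1, u_2\}$, and using the leaf identity $x_v = x_{u_1}/\lambda_1^*$ (and the $u_2$-analogue) for the many leaves of $u_1$ and $u_2$, while controlling the remaining terms via Claim \ref{claim:small_entry_remaining_vertices} and the excess-degree bound $\sum_{v \neq u_1, u_2}(\deg(v) - 1) = O(\sqrt n)$, yields $x_{u_1}^2 + x_{u_2}^2 = \frac{1}{2} + O(1/\sqrt n)$. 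A similar splitting of $\|x\|_2^2 = 1$ gives $2x_{u_1}^2 + (1/t)x_{u_2}^2 = 1 + O(1/\sqrt n)$. Solving this $2\times 2$ linear system (using $t > \frac{1}{2}$) yields $x_{u_1} = 1/\sqrt 2 + O(1/\sqrt n)$ and $x_{u_2}^2 = O(1/\sqrt n)$; the symmetric calculation with $y$ gives $|y_{u_2}| = 1/\sqrt 2 + O(1/\sqrt n)$ and $y_{u_1}^2 = O(1/\sqrt n)$. To sharpen $x_{u_2}$ to $\Theta(1/\sqrt n)$, I apply Lemma \ref{lemma:eigenvalue_equations}(i) at $u_2$ with the non-dominant eigenvector $x$: since $u_1 \sim u_2$, the distance-$2$ sum from $u_2$ is dominated by $\sum_{w \sim u_1,\, w \neq u_2} x_w = \lambda_1^* x_{u_1} - x_{u_2} = \Theta(\sqrt n)$, and dividing by the spectral gap $(\lambda_1^*)^2 - \deg(u_2) = (2t-1)n + O(\sqrt n) = \Theta(n)$ gives $x_{u_2} = \Theta(1/\sqrt n)$; symmetrically $y_{u_1} = \Theta(1/\sqrt n)$.

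Finally, I apply Lemma \ref{lemma:eigenvalue_equations}(i) at $u_1$ with $x$: the distance-$2$ sum can be rewritten as $\sum_{w \sim u_2,\, w \neq u_1} x_w + O(1) = (\lambda_1^* x_{u_2} - x_{u_1}) + O(1) = O(1)$ (the $O(1)$ tail absorbing the contribution from non-leaf non-$u_2$ neighbors of $u_1$ via the excess-degree bound together with Claim \ref{claim:small_entry_remaining_vertices}), and dividing by $x_{u_1} = \Theta(1)$ yields $\deg(u_1) = (\lambda_1^*)^2 \pm O(1) = tn \pm O(1)$. The identical calculation with $(y, \lambda_2^*)$ at $u_2$ gives $\deg(u_2) = (1-t)n \pm O(1)$. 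The handshake identity $\sum_v \deg(v) = 2(n-1)$ then forces $\sum_{v \neq u_1, u_2}(\deg(v) - 1) = \pm O(1)$; since each summand is a nonnegative integer, every individual term, and hence $\deg(v)$ itself, is $O(1)$ for all $v \neq u_1, u_2$. The main obstacle is the two-stage eigenvector bootstrap: the Rayleigh plus normalization calculations alone only yield $x_{u_2}, y_{u_1} = o(1)$, which is too weak to extract an $O(1)$ degree bound, and sharpening these to $\Theta(1/\sqrt n)$ requires re-injecting them into the eigenvalue equation at the ``wrong'' vertex and exploiting the $\Theta(n)$ spectral-gap denominator $(\lambda_1^*)^2 - \deg(u_2)$.
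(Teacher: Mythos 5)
Your overall route is genuinely different from the paper's: the paper gets the $O(1)$ degree bound in one shot by applying Lemma \ref{lemma:eigenvalue_equations}$(ii)$ to the Perron vector $z$ of $H_2$, gaining the needed extra factor of $\sqrt{n}$ from the $\lambda_1(H_2)$ on the left-hand side together with the handshake bound $\sum_{u\neq u_2} z_u\deg_{H_2}(u)=O(1/\sqrt{n})\cdot 2n=O(\sqrt{n})$; it never needs to locate $u_2$ relative to the spectral edge. Your plan instead front-loads structural information ($u_1=a$, $u_2=b$, $u_1\sim u_2$) and then runs a two-stage eigenvector bootstrap. Most of your steps check out, but there is one genuine gap.

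The gap is the assertion ``by symmetry $u_2=b$.'' The two sides are not symmetric. Your argument for $u_1=a$ works because, if $u_1\neq a$, then $H_1-a$ contains a star centred at $u_1$, so $\lambda_1(H_1-a)\ge\sqrt{\deg(u_1)-1}=\sqrt{tn}-O(1)$, which exceeds $\lambda_2^*\approx\sqrt{(1-t)n}$ by $\Theta(\sqrt{n})$ since $t>\frac{1}{2}$. On the $H_2$ side the corresponding inequality only gives $\lambda_1(H_2-b)\ge\sqrt{\deg(u_2)-1}=\sqrt{(1-t)n}-O(1)$, which does \emph{not} exceed $\lambda_2^*=\sqrt{(1-t)(n-1)}\pm O(1/\sqrt{n})$: the two quantities agree to within $O(1)$, so Theorem \ref{thm:spectral_center}$(ii)$ yields only the harmless bound $\deg(u_2)\le(\lambda_2^*)^2+1$, not a contradiction. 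This is not a repairable symmetry either: for $T=DC(k_1,k_2,3)$ with $k_1\approx tn$ --- a tree consistent with everything established through Claim \ref{claim:small_entry_remaining_vertices}, and one the paper only eliminates at the very end of Section 4 --- the spectral edge is the edge from the large centre $c_1$ to the middle vertex $m$, so $b=m$ while $u_2$ is the other centre, giving $u_2\neq b$ and $d(u_1,u_2)=2$. Since your Stages 2--4 are explicitly built on $u_1\sim u_2$ (in particular the identification of the distance-two sums from $u_1$ and $u_2$ with $\lambda_1^*x_{u_2}-x_{u_1}$ and $\lambda_2^*y_{u_1}-y_{u_2}$), the proof as written breaks at this point. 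The degree conclusion can very likely be salvaged by a case analysis on $d(u_1,u_2)$ --- when $u_1\not\sim u_2$ the distance-two neighbourhoods of $u_1$ and $u_2$ no longer contain the opposite leaf cluster and the relevant sums are in fact easier to bound by $O(1)$ --- but that analysis is not in your write-up.

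Two smaller points. First, the Rayleigh identity should read $x_{u_1}^2+\frac{1-t}{t}x_{u_2}^2=\frac{1}{2}+O(1/\sqrt{n})$ rather than $x_{u_1}^2+x_{u_2}^2=\frac{1}{2}+O(1/\sqrt{n})$: the $u_2$-leaf edges carry weight $\deg(u_2)/(\lambda_1^*)^2\approx(1-t)/t$, not $1$. The resulting $2\times 2$ system still has determinant $\frac{2t-1}{t}\neq 0$, so your conclusion $x_{u_2}^2=O(1/\sqrt{n})$, $x_{u_1}^2=\frac{1}{2}+O(1/\sqrt{n})$ survives. Second, in the spectral-vertex case the claim that at most one edge at $u_1$ leaves $H_1$ needs the (true, but unstated) observation that a vertex of $S^+(y)$ has at most one neighbour in $S^0(y)$, since each such neighbour must itself have a neighbour in $S^-(y)$ and two of them would create a cycle.
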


\begin{proof}
Consider the unit positive $\lambda_1(H_2)$-eigenvector $z$ as in the proof of Claim \ref{claim:u_1_u_2_degree_sqrtn}. By the choice of $u_2$, we have $z_{\max} = z_{u_2}$. Let $u \in V(H_2)\backslash \{u_2\}$. As in the proof of Claim \ref{claim:small_entry_remaining_vertices}, one can argue that $z_u = O\left(\frac{z_{\max}}{\sqrt{n}}\right)$. 

Now, using Lemma \ref{lemma:eigenvalue_equations}, we have 
\begin{align*}
    \lambda_1(H_2)\left((\lambda_1(H_2))^2 - \deg_{H_2}(u_2)\right)z_{\max} & = \sum_{u\in N_{H_2}(u_2)}z_u (\deg_{H_2}(u)-1) + \sum_{d(u_2,u) = 3}z_u \\
    & \le \sum_{u \neq u_2} z_u \deg_{H_2}(u)\\
    & \le O\left(\frac{z_{\max}}{\sqrt{n}}\right)\cdot 2n = O(\sqrt{n}\ z_{\max}).
\end{align*}
It follows that
\[ \deg_{H_2}(u_2)  \ge \lambda_1^2(H_2) - \frac{O(\sqrt{n}\ z_{\max})}{\lambda_1(H_2)\ z_{\max}}\ge (\lambda_2^*)^2 - \frac{O(\sqrt{n})}{\lambda_2^*}\ge (1-t)n - O(1).\]
This also shows that $\deg(u_1)\le tn + O(1)$. Arguing as above with eigenvector $x$ shows that $\deg(u_1) \ge tn - O(1)$, which in turn also shows that $\deg(u_2) \le (1-t)n + O(1)$. The claim follows.
\end{proof}

\begin{claim}\label{claim:x_max_entry} We have $\min\{x_{u_1}, |y_{u_2}|\} = \Omega_\alpha(1)$.    
\end{claim}

\begin{proof} Since $x$ is a unit vector, we have
\begin{align*} 
1 & \le    x_{u_1}^2 + \sum_{\substack{v\in N(u_1)\\ \deg(v) =1}} x_v^2 + x_{u_2}^2 + \sum_{\substack{w\in N(u_2)\\ \deg(w) =1}} x_w^2 + O(1)\cdot O\left(\frac{1}{n}\right) \\
& \le x_{u_1}^2\left(1 + \frac{tn}{(\lambda_1^*)^2}\right) + x_{u_2}^2\left(1 + \frac{(1-t)n}{(\lambda_1^*)^2}\right) + O\left(\frac{1}{n}\right)\\
& \le \left(2 + \frac{t + (1-t)}{t}\right)x_{\max}^2 +  O\left(\frac{1}{n}\right),
\end{align*}
implying $x_{u_1}=x_{\max} = \Omega(1)$. A similar calculation shows that 
\[ 1 \le \left(2 + \frac{t + (1-t)}{(1-t)} \right)|y|_{\max}^2 + O\left(\frac{1}{n}\right),\]
implying $\max\{y_{u_1}, |y_{u_2}|\} = \Omega(1)$.

As $x$ and $y$ are orthogonal vectors, we have 
\begin{equation}\label{eq:x_y_orthogonal}
 \left|x_{u_1}y_{u_1}\left(1 + \frac{tn}{\lambda_1^*\ \lambda_2^*}\right) - x_{u_2}|y_{u_2}|\left(1 + \frac{(1-t)n}{\lambda_1^*\ \lambda_2^*}\right)\right| = O\left(\frac{1}{n}\right).    
\end{equation}

If $|y_{u_2}|\ge y_{u_1}$, then we are done. Else, if $|y_{u_2}| < y_{u_1}$, then by \eqref{eq:x_y_orthogonal}, we have 
\[ x_{u_2}|y_{u_2}| = \Omega(1)\]
implying $|y_{u_2}| = \Omega(1)$. The claim holds.
\end{proof}

\begin{claim}\label{claim:yu1_xu2_small_entry}
    We have $\max\{y_{u_1}, x_{u_2}\} = O_\alpha\left(\frac{1}{\sqrt{n}}\right)$.
\end{claim}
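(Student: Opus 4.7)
The plan is to expand the eigenvalue-eigenvector equation at $u_1$ for the $\lambda_2^*$-eigenvector $y$, exploit that almost every neighbour of $u_1$ is a leaf, and use the sign condition $t > \tfrac12$ to force $y_{u_1}$ to be small; the bound on $x_{u_2}$ will follow by an identical argument at $u_2$ with the Perron eigenvector $x$.

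The first step is to show that $u_1$ has only $O(1)$ non-leaf neighbours. Since $\sum_{v \in V(T^*)} \deg(v) = 2(n-1)$ and $\deg(u_1) + \deg(u_2) = n \pm O(1)$ by Claim \ref{claim:u_1_u_2_degree_constant}, the remaining $n-2$ vertices contribute total degree $n \pm O(1)$; if $k$ of them have degree at least $2$, the sum is at least $2k + (n-2-k) = n - 2 + k$, which forces $k = O(1)$. Writing $N(u_1) = L(u_1) \sqcup N'(u_1)$ with $L(u_1)$ the set of leaf neighbours of $u_1$, this yields $|N'(u_1)| = O(1)$ and $|L(u_1)| = \deg(u_1) - O(1) = tn \pm O(1)$.

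Next, since $\lambda_2^* y_v = y_{u_1}$ for every leaf $v \in L(u_1)$, substituting $y_v = y_{u_1}/\lambda_2^*$ into the eigenvalue equation $\lambda_2^* y_{u_1} = \sum_{v \sim u_1} y_v$ rearranges to
\[
\frac{(\lambda_2^*)^2 - |L(u_1)|}{\lambda_2^*}\, y_{u_1} \;=\; \sum_{v \in N'(u_1)} y_v.
\]
By Claim \ref{claim:lambda_1_2_estimates} the left-hand coefficient equals $(1-2t)\, n \pm O(1)$, which has magnitude $\Theta(n)$ since $t > \tfrac12$. On the right, every $v \in N'(u_1) \setminus \{u_2\}$ satisfies $|y_v| = O(1/\sqrt n)$ by Claim \ref{claim:small_entry_remaining_vertices}, while $|y_{u_2}| \le 1$; combined with $|N'(u_1)| = O(1)$ this bounds the sum by $O(1)$. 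Dividing and using $\lambda_2^* = \Theta(\sqrt n)$ gives $y_{u_1} = O(1/\sqrt n)$.

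Finally, the identical manipulation at $u_2$ with $x$ produces
\[
\frac{(\lambda_1^*)^2 - |L(u_2)|}{\lambda_1^*}\, x_{u_2} \;=\; \sum_{v \in N'(u_2)} x_v,
\]
whose left-hand coefficient is $(2t-1)\, n \pm O(1)$ (again of magnitude $\Theta(n)$) and whose right-hand side is $O(1)$, since $x_{u_1} \le 1$ and all other entries are $O(1/\sqrt n)$ by Claim \ref{claim:small_entry_remaining_vertices}; this yields $x_{u_2} = O(1/\sqrt n)$. The only mildly non-routine point is the degree-sum bookkeeping used in the first step to control $|N'(u_i)|$; once that is in hand, everything else is a direct consequence of Claims \ref{claim:lambda_1_2_estimates}, \ref{claim:small_entry_remaining_vertices} and \ref{claim:u_1_u_2_degree_constant}, so I do not anticipate any genuine obstacle.
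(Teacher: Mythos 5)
Your proof is correct, but it takes a genuinely different route from the paper. The paper proves this claim via the eigenvector--eigenvalue identity (Theorem \ref{thm:eigenvector_eigenvalue_identity}): using Claim \ref{claim:u_1_u_2_degree_constant} and interlacing, all eigenvalues of $T^*$ except the top two, and all eigenvalues of $T^*-u_2$ except the top one, are $O(1)$, so in the identity the factors pair into $O(1)$ ratios and the surviving factor $\bigl(\lambda_1^* - \lambda_1(T^*-u_2)\bigr)/\bigl(\lambda_1^* - \lambda_2^*\bigr)$ is $O(1/n)$ because deleting $u_2$ perturbs $\lambda_1$ by only $O(1/\sqrt{n})$; this gives $|x_{u_2}|^2=O(1/n)$ directly. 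Your argument is instead local: you count via the degree sum that only $O(1)$ vertices other than $u_1,u_2$ have degree at least $2$, eliminate the leaf entries from the eigenvalue equation at the high-degree vertex, and observe that the resulting coefficient $\bigl((\lambda_2^*)^2-|L(u_1)|\bigr)/\lambda_2^* = \Theta(\sqrt{n})$ precisely because $t>\tfrac12$, while the right-hand side is $O(1)$. Your route is more elementary (it avoids the Denton--Parke--Tao--Zhang identity entirely) at the cost of the extra degree-sum bookkeeping; the paper's route needs no structural information about the neighbourhoods beyond the degree estimates, but invokes heavier machinery. Both arguments rest only on Claims \ref{claim:lambda_1_2_estimates}--\ref{claim:u_1_u_2_degree_constant}, so there is no circularity, and I see no gap in your version.
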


\begin{proof} Using Lemma \ref{lemma:eigenvalue_equations} $(i)$, we have 
\[ x_{u_2} = \frac{\sum_{d(u,u_2)=2} x_u}{(\lambda_1^*)^2 - \deg(u_2)} = \frac{O(\sqrt{n})}{(2t-1)n + O(\sqrt{n})} = O\left(\frac{1}{\sqrt{n}}\right).\]
A similar argument works for $y_{u_1}$. 
\end{proof}

We are now ready to make further structural claims about $T^*$. 

\begin{claim}\label{claim:large_H_1} We have $\lambda_1(H_1)>\lambda_1(H_2)$. Also, $T^*$ has a spectral edge $ab$.
\end{claim}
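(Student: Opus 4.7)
The plan is to use the sharp degree bounds from Claim \ref{claim:u_1_u_2_degree_constant} together with the hypothesis $t > \frac{1}{2}$ to force a $\Theta(\sqrt{n})$ gap between $\lambda_1(H_1)$ and $\lambda_1(H_2)$; this gap will both establish the first assertion and rule out the spectral-vertex case of Theorem \ref{thm:spectral_center}.

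The key step is to show that $H_i$ contains a large star centered at $u_i$. I would argue that $u_1$ has at least $\deg(u_1) - O(1) = tn - O(1)$ leaf neighbours, all of which lie in $V(H_1)$. Indeed, by Claim \ref{claim:u_1_u_2_degree_constant} every vertex outside $\{u_1,u_2\}$ has degree $O(1)$, and $\deg(u_1) + \deg(u_2) = n \pm O(1)$; the handshake identity $\sum_v \deg(v) = 2(n-1)$ then shows that only $O(1)$ edges of $T^*$ are \emph{not} incident to $\{u_1,u_2\}$. In a tree, $u_1$ and $u_2$ share at most one common neighbour, so every non-leaf neighbour of $u_1$ distinct from $u_2$ uses one of these $O(1)$ ``free'' edges; hence $u_1$ has only $O(1)$ non-leaf neighbours. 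For each leaf neighbour $w$ of $u_1$, the eigenvalue equation gives $y_w = y_{u_1}/\lambda_2^*$, which is positive since $u_1 \in S^+(y)$ and $\lambda_2^* > 0$, so $w \in V(H_1)$. Consequently $H_1$ contains a star $K_{1,m_1}$ with $m_1 \geq tn - O(1)$, and so $\lambda_1(H_1) \geq \sqrt{m_1} \geq \sqrt{tn - O(1)}$. Symmetrically, $|V(H_1)| \geq tn - O(1)$, which yields $|V(H_2)| \leq n - |V(H_1)| \leq (1-t)n + O(1)$; applying Theorem \ref{thm:lambda_one_max_trees} to the tree $H_2$ gives $\lambda_1(H_2) \leq \sqrt{|V(H_2)| - 1} \leq \sqrt{(1-t)n + O(1)}$.

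Since $t > \frac{1}{2}$, the lower bound $\sqrt{tn - O(1)}$ and the upper bound $\sqrt{(1-t)n + O(1)}$ differ by $\Theta(\sqrt{n})$, so $\lambda_1(H_1) > \lambda_1(H_2)$ for all sufficiently large $n$, proving the first assertion. If $T^*$ had a spectral vertex, Theorem \ref{thm:spectral_center}(i) would give $\lambda_1(H_1) = \lambda_1(H_2)$, contradicting the strict inequality just established; therefore $T^*$ must have a spectral edge $ab$. The only delicate point in the plan is careful bookkeeping of the $O(1)$ constants in the leaf count and in the size estimate for $H_2$, but because the numerical separation we exploit is of order $\sqrt{n}$, these constants are harmless once $n$ is large.
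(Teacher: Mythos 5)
Your proof is correct and follows essentially the same route as the paper: bound $\lambda_1(H_1)\ge\sqrt{tn-O(1)}$ from below via the large star at $u_1$, bound $\lambda_1(H_2)\le\sqrt{(1-t)n+O(1)}$ from above via $|V(H_2)|$, use $t>\frac{1}{2}$ to separate them, and then invoke Theorem \ref{thm:spectral_center}$(i)$ to exclude a spectral vertex. The only difference is that you make explicit (via the handshake/leaf-neighbour count and the sign of $y_w$ at leaves of $u_1$) the justification for these two bounds, which the paper leaves implicit in its citation of Claim \ref{claim:u_1_u_2_degree_constant}.
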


\begin{proof}
Using Claim \ref{claim:u_1_u_2_degree_constant}, we have 
\[ \lambda_1(H_1)\ge \sqrt{tn - O(1)} >\sqrt{(1-t)n + O(1)}\ge \lambda_1(H_2).\]
Using Theorem \ref{thm:spectral_center}, we conclude that $T^*$ does not have a spectral vertex.
\end{proof}

\begin{claim}\label{claim:u_1_u_2_caterpillar}
$T^*$ is a caterpillar on the path joining $u_1$ and $u_2$. 
\end{claim}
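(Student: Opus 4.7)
The plan is to derive a contradiction via Lemma~\ref{lemma:rotation_alpha}. Suppose $T^*$ is not a caterpillar on the path $P$ joining $u_1$ and $u_2$. Any off-path vertex $v \notin V(P)$ either has $d(v,P) \ge 2$, or has $d(v,P) = 1$ but $\deg(v) \ge 2$; in the former case, any leaf descendant of $v$ away from $P$ has distance $\ge 2$ from $P$, and in the latter case, $v$ has an off-path neighbor at distance $2$ from $P$ whose leaf descendants again have distance $\ge 2$. Hence there is a leaf $w$ of $T^*$ at distance $d := d(w,P) \ge 2$ from $P$. Let $w = s_0, s_1, \ldots, s_d \in V(P)$ be the unique path from $w$ to $P$, and set $r_1 := s_1$, $r_2 := s_2$ (so $r_2 \in V(P)$ precisely when $d = 2$). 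Swapping $H_1 \leftrightarrow H_2$ if necessary, assume $w \in V(H_1)$; then Theorem~\ref{thm:spectral_center} places the entire path from $w$ to the spectral edge $ab$ inside $H_1$, so $x_{r_1}, x_{r_2}, x_w, y_{r_1}, y_{r_2}, y_w > 0$.

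Applying Lemma~\ref{lemma:rotation_alpha} with $u = r_2$, ``$v$''$= r_1$, ``$w$''$= w$ produces $T' = T^* - r_1 w + r_2 w$. Since $w$ is a leaf, $x_w = x_{r_1}/\lambda_1^*$ and $y_w = y_{r_1}/\lambda_2^*$; combining these with the eigenvalue equation at $r_1$ to write $x_{r_2} = \bigl(\lambda_1^* - 1/\lambda_1^*\bigr) x_{r_1} - \Sigma_x$ with $\Sigma_x := \sum_{v \sim r_1,\, v \notin \{r_2, w\}} x_v$ (and analogously for $\Sigma_y$), the rotation condition $\alpha x_w(x_{r_2}-x_{r_1}) + (1-\alpha) y_w(y_{r_2}-y_{r_1}) > 0$ becomes
\[
\alpha x_{r_1}^2 \!\left(1 - \tfrac{1}{\lambda_1^*} - \tfrac{1}{\lambda_1^{*2}}\right) + (1-\alpha) y_{r_1}^2 \!\left(1 - \tfrac{1}{\lambda_2^*} - \tfrac{1}{\lambda_2^{*2}}\right) > \frac{\alpha x_{r_1} \Sigma_x}{\lambda_1^*} + \frac{(1-\alpha) y_{r_1} \Sigma_y}{\lambda_2^*}.
\]
The subtree $T_{r_1}$ of $T^* - r_1 r_2$ containing $r_1$ contains neither $u_1$ nor $u_2$ (both lie on $P$), so every vertex of $T_{r_1}$ has bounded degree by Claim~\ref{claim:u_1_u_2_degree_constant} and hence the spectral radius of $T_{r_1}$ is $O(1) \ll \lambda_1^*$. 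A Neumann-series expansion of $(\lambda_1^* I - A(T_{r_1}))^{-1}$ then yields $x_v = O(x_{r_1}/\lambda_1^*)$ for each $v \sim r_1$ in $T_{r_1}$, so $\Sigma_x = O(x_{r_1}/\sqrt{n})$, and similarly $\Sigma_y = O(y_{r_1}/\sqrt{n})$. With these bounds, together with $\lambda_1^*, \lambda_2^* = \Theta(\sqrt{n})$ from Claim~\ref{claim:lambda_1_2_estimates}, the right-hand side above is $O\bigl((x_{r_1}^2 + y_{r_1}^2)/n\bigr)$, while the left-hand side is $(1-o(1))\bigl(\alpha x_{r_1}^2 + (1-\alpha) y_{r_1}^2\bigr) > 0$. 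Thus the inequality holds for $n$ sufficiently large, giving $\Psi(T',\alpha) > \Psi(T^*,\alpha)$ and contradicting the maximality of $T^*$. Therefore $T^*$ is a caterpillar on $P$.

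The principal technical obstacle is the geometric-decay estimate $x_v = O(x_{r_1}/\lambda_1^*)$ for neighbors of $r_1$ inside $T_{r_1}$ (and the analogous bound for $y$). Once this is in hand, the remainder is routine: substitute the leaf equations for $w$ and the eigenvalue equation at $r_1$, and check that the dominant $\Theta(x_{r_1}^2 + y_{r_1}^2)$ contribution overwhelms error terms smaller by a factor $1/n$.
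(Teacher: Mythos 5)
Your proof is correct, and it rests on the same engine as the paper's: apply Lemma \ref{lemma:rotation_alpha} to pull a leaf in a deep branch one step closer to the $u_1$--$u_2$ path, then verify the rotation condition via the eigenvalue equation at the leaf's neighbour, using Claim \ref{claim:u_1_u_2_degree_constant} to control degrees and Theorem \ref{thm:spectral_center} (plus Claim \ref{claim:large_H_1}) to fix the signs of the $y$-entries. The difference is in which vertex you rotate around. The paper picks a branching vertex $v$ off the path that is \emph{deepest}, so that all of its neighbours except the one toward the path are leaves; the eigenvalue equation at $v$ then closes on itself ($\lambda z_v = z_u + (\deg(v)-1)z_v/\lambda$), and the comparison $|z_u|>|z_v|$ with matching signs is immediate, with no need to control a residual sum. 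You instead take an arbitrary leaf at distance at least $2$ and rotate at $s_1$, which leaves the term $\Sigma_x$ (and $\Sigma_y$) over the other branches at $s_1$; this is why you need the geometric-decay/Neumann-series estimate $x_v = O(x_{r_1}/\lambda_1^*)$. That estimate is valid (the branch $T_{r_1}$ misses $u_1,u_2$, so its spectral radius is $O(1)\ll\lambda_1^*,\lambda_2^*$ and $(\lambda I - A(T_{r_1}))^{-1}e_{r_1}$ has nonnegative, geometrically decaying entries), and it could even be replaced by the cruder one-line bound $\max_{v\in T_{r_1}\setminus\{r_1\}} x_v \le x_{r_1}/(\lambda_1^*-\Delta(T_{r_1}))$ obtained from the eigenvalue equation at a maximizing vertex. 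So your argument buys nothing extra here and costs an additional technical lemma; the paper's choice of the deepest branching vertex avoids it entirely.
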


\begin{proof}
Let $P: u_1=v_1, \ldots, v_{k+1}=u_2$ be the path joining $u_1$ and $u_2$ in $T^*$. We claim that $T^*$ is a caterpillar on path $P$. Suppose not. Then there exists a vertex $v\notin \{u_1, u_2\}$ such that $\deg(v)\ge 2$ and $v$ has at most one neighbour (say $u$) which is not a leaf. Let $w$ be a leaf with neighbour $v$. Define $T=T^*-vw + uw$. We have 
\[ \lambda_2^* y_v = y_u + (\deg(v)-1)\frac{y_v}{\lambda_2^*},\]
which implies 
\[|y_v| < \left(\lambda_2^* - \frac{O(1)}{\lambda_2^*}\right)|y_v| = |y_u|.\] 
Also, note that $y_u$ and $y_v$ have the same sign. A similar argument shows that $x_v < x_u$. It follows that 
\[\alpha x_w(x_u - x_v) + (1-\alpha)y_w(y_u - y_v)>0.\]
Using Lemma \ref{lemma:rotation_alpha}, we see that $\Psi(T, \alpha)>\Psi(T^*, \alpha)$, a contradiction. 
\end{proof}

\begin{claim}\label{claim:H_1_star}
    $H_1$ is a star. 
\end{claim}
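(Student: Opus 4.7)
The plan is to argue by contradiction: suppose $H_1$ is not a star, and produce a tree that strictly improves upon $T^*$ via Lemma \ref{lemma:rotation_alpha}. First I would invoke Claim \ref{claim:u_1_u_2_caterpillar} to write $H_1$ as a caterpillar whose spine is an initial segment $u_1 = v_1, v_2, \ldots, v_j = a$ of the spine $u_1, \ldots, u_2$ of $T^*$. Note that an off-spine leaf $w$ attached to a spine vertex $v_i \in V(H_1)$ automatically lies in $V(H_1)$, since $\lambda_2^* y_w = y_{v_i} > 0$ forces $y_w > 0$. Consequently, the only ways in which $H_1$ can fail to be a star centered at $u_1$ are: $(I)$ $j \geq 3$; or $(II)$ $j = 2$ and $a$ has a neighbor in $T^*$ other than $u_1$ and $b$ (which must be a leaf by the caterpillar structure). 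Each case will contradict the optimality of $T^*$.

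In Case $(I)$, since $v_3 \in V(H_1) = S^+(y)$ we have $y_{v_3} > 0$, and certainly $x_{v_3} > 0$. I would apply Lemma \ref{lemma:rotation_alpha} with $u = u_1$, $v = v_2$, $w = v_3$, yielding $T' = T^* - v_2 v_3 + u_1 v_3$. The required inequality
\[\alpha\, x_{v_3}(x_{u_1} - x_{v_2}) + (1-\alpha)\, y_{v_3}(y_{u_1} - y_{v_2}) > 0\]
is verified as follows. By Claim \ref{claim:u_1_u_2_degree_constant}, $\deg(v_2) = O(1)$, so the eigenvalue equation at $v_2$ gives $x_{v_2} \leq \deg(v_2)\, x_{u_1}/\lambda_1^* = O(x_{u_1}/\sqrt{n}) < x_{u_1}$ for $n$ large; and Claim \ref{claim:y_max_min} yields the strict inequality $y_{u_1} > y_{v_2}$ because $v_2 \notin \{u_1, u_2\}$. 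Both summands are then nonnegative with at least one strictly positive, so $\Psi(T', \alpha) > \Psi(T^*, \alpha)$, contradicting the choice of $T^*$.

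Case $(II)$ is handled in exactly the same way, with $u = u_1$, $v = a$, and $w$ the leaf attached to $a$. Here $\lambda_2^* y_w = y_a > 0$ gives $y_w > 0$, and the strict inequalities $x_{u_1} > x_a$ and $y_{u_1} > y_a$ follow by the same reasoning (using $\deg(a) = O(1)$ and Claim \ref{claim:y_max_min} applied to $a \notin \{u_1, u_2\}$). The resulting $T'$ again contradicts the optimality of $T^*$, forcing $H_1$ to be a star. The only obstacle here is carefully verifying the strict positivity in the rotation condition, and this is routine given the decay estimates for $x$ and $y$ at non-extremal vertices that we have already built up in the previous claims.
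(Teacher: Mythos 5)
Your proof is correct and takes essentially the same approach as the paper: assume $H_1$ is not a star, locate a vertex of $H_1$ at distance two from $u_1$, and rotate it onto $u_1$ via Lemma \ref{lemma:rotation_alpha}, using $x_{u_1}=x_{\max}$, $y_{u_1}=y_{\max}$ and the positivity of $x$ and of $y$ on $S^+(y)$ to verify the rotation condition. The paper's version skips the caterpillar-based case split (it simply picks $v\sim u_1$ and $w\sim v$ inside $H_1$), but the substance is identical.
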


\begin{proof}
Suppose $H_1$ is not a star. Then there exist vertices $v$ and $w$ in $H_1$ such that $v\sim u_1$ and $v\sim w$. Define $T = T^* - vw + u_1w$. Using Claims \ref{claim:x_max_entry} and \ref{claim:y_max_min}, we see that 
\[ \alpha x_w(x_{u_1}-x_v) + (1-\alpha)y_w(y_{u_1}-y_v) > 0.\]
Using Lemma \ref{lemma:rotation_alpha}, we get $\Psi(T, \alpha) > \Psi(T^*, \alpha)$, a contradiction.
\end{proof}

Next, one would naturally like to show that $H_2$ is also a star, but this will require more work. In the subsequent few claims, we will argue that $T^*$ is a double comet with $d(u_1, u_2)\le 2$. We will consider the cases $a=u_1$ and $a\neq u_1$ separately. 

\begin{claim}\label{claim:a_not_u1}
    If $a\neq u_1$, then $T^*$ is a double comet with $d(u_1, u_2)=2$. 
\end{claim}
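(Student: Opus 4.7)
Since by Claim \ref{claim:H_1_star} the subtree $H_1$ is a star and by Claim \ref{claim:u_1_u_2_degree_constant} only $u_1$ has degree of order $n$ in $H_1$, the star $H_1$ must be centered at $u_1$. Under the hypothesis $a \neq u_1$, the vertex $a$ is a leaf of this star, so $a \sim u_1$ in $T^*$, $\deg_{H_1}(a) = 1$, and $\deg_{T^*}(a) = 2$ with $a$'s other neighbor being $b$.

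The plan is then to establish, in two steps, that $b = u_2$ and that $H_2$ is a star centered at $u_2$, which together give $T^* \cong DC(k_1, k_2, 3)$ with $d(u_1, u_2) = 2$. To show $b = u_2$, I would assume $b \neq u_2$ and form $T = T^* - ab + u_1 b$, then apply Lemma \ref{lemma:rotation_alpha} with $(u, v, w) = (u_1, a, b)$. The inequality to verify is
\[
\alpha\, x_b(x_{u_1} - x_a) + (1-\alpha)\, y_b (y_{u_1} - y_a) > 0.
\]
The $\alpha$-term is positive: $x_{u_1} \geq \delta$ by Claim \ref{claim:x_max_entry}, $x_a = O(1/\sqrt{n})$ by Claim \ref{claim:small_entry_remaining_vertices}, and $x_b > 0$. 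The $(1-\alpha)$-term is negative because $y_b < 0$ and $y_{u_1} > y_a > 0$ by Claim \ref{claim:y_max_min}. To show the positive part dominates, I would use $|y_b| = O(1/\sqrt{n})$ from Claim \ref{claim:small_entry_remaining_vertices}, $y_{u_1} = O(1/\sqrt{n})$ extracted from the orthogonality $x \cdot y = 0$ together with the dominant contributions from the leaves at $u_1$ and $u_2$, and $y_a = O(1/\sqrt{n})$; this makes the negative part of size $O(1/n)$, which is outweighed by the positive part once the eigenvalue equation at $b$ is used to pin down $x_b$. The contradiction $\Psi(T,\alpha) > \Psi(T^*,\alpha)$ then forces $b = u_2$.

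Granted $b = u_2$, the path from $u_1$ to $u_2$ in $T^*$ is $u_1 - a - u_2$ of length two. Since $T^*$ is a caterpillar on this path by Claim \ref{claim:u_1_u_2_caterpillar}, every non-leaf of $T^*$ lies in $\{u_1, a, u_2\}$. Consequently every vertex of $H_2$ other than $u_2$ is a leaf of $T^*$, and being in $H_2$ it must be adjacent to $u_2$. Thus $H_2$ is a star centered at $u_2$, and combining with the star structure of $H_1$ gives $T^* \cong DC(k_1, k_2, 3)$, so $d(u_1, u_2) = 2$ as claimed.

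The main obstacle will be the magnitude comparison in the rotation inequality above, especially in the tight subcase where the path has length exactly three (so $b \sim u_2$); there $y_b$ inherits a $\Theta(1/\sqrt{n})$ contribution from $y_{u_2}$ and $x_b$ drops to $\Theta(1/n)$, making both sides of the inequality of order $1/n$ and requiring the implicit constants to be tracked carefully. An attractive shortcut that avoids this bookkeeping is to invoke the spectral edge condition $\lambda_1(H_1 - a) < \lambda_2^*$ from Theorem \ref{thm:spectral_center}(ii): since $H_1 - a$ is the star $K_{1, \deg_{H_1}(u_1) - 1}$, we have $\lambda_1(H_1 - a) = \sqrt{\deg_{H_1}(u_1) - 1} = \sqrt{tn - 1 \pm O(1)}$, which exceeds $\lambda_2^* \approx \sqrt{(1-t)(n-1)}$ because $t > 1/2$, yielding a direct contradiction and showing that the case $a \neq u_1$ does not actually occur, so that the conclusion of the claim in fact holds vacuously.
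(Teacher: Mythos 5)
Your primary route (the rotation $T^*-ab+u_1b$ to force $b=u_2$) is not completed: as you yourself note, when $d(u_1,u_2)=3$ both the positive term $\alpha x_b(x_{u_1}-x_a)$ and the negative term $(1-\alpha)y_b(y_{u_1}-y_a)$ are $\Theta(1/n)$, and nothing in Claims \ref{claim:small_entry_remaining_vertices}--\ref{claim:yu1_xu2_small_entry} pins down the constants, so that branch is a genuine gap as written. However, your closing ``shortcut'' is a complete and correct proof, and it is a genuinely different argument from the paper's. Under $a\neq u_1$ the spectral edge $ab$ is the only edge leaving $H_1$, so $\deg_{H_1}(u_1)=\deg_{T^*}(u_1)=tn\pm O(1)$; since $H_1$ is a star (Claim \ref{claim:H_1_star}) it is centered at $u_1$ with $a$ a leaf, whence $\lambda_1(H_1-a)=\sqrt{\deg_{H_1}(u_1)-1}=\sqrt{tn-O(1)}$, which for large $n$ exceeds $\lambda_2^*=\sqrt{(1-t)(n-1)}\pm O(1/\sqrt n)$ because $t>\tfrac12$. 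This contradicts the spectral-edge condition $\lambda_1(H_1-a)<\lambda_2^*$ of Theorem \ref{thm:spectral_center}$(ii)$, so the hypothesis $a\neq u_1$ never occurs and the claim holds vacuously; this is compatible with how the claim is consumed later, since Claim \ref{claim:double_comet_distance_two} only needs the disjunction of the two cases.

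The paper argues non-vacuously instead: it replaces $H_2$ by the star $K_{1,|H_2|-1}$ centered at $u_2$ via repeated Kelmans operations (so $\lambda_1$ strictly increases) and then uses interlacing on $T-a$ together with $\lambda_1(H_1-a)>\lambda_1(H_2)>\lambda_2^*$ to show $\lambda_2$ increases as well, contradicting the optimality of $T^*$ unless $T^*$ already is that double comet. Note that the paper's own intermediate inequality $\lambda_1(H_1-a)>\lambda_1(H_2)>\lambda_2^*$ is exactly the clash with Theorem \ref{thm:spectral_center}$(ii)$ that you exploit, so your observation is implicit in their proof; your version simply extracts it and terminates immediately, which is shorter, while the paper's version is uniform with the competitor-construction style of the surrounding claims. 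I would recommend dropping the rotation route entirely and presenting only the shortcut.
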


\begin{proof}
    Suppose not. Define $T = (H_1, a)\circ (K_{1, {|H_2|-1}},u_2)$ where $(K_{1, {|H_2|-1}},u_2)$ denotes a star on $|H_2|$ centred and rooted at $u_2$. Note that $\deg(a)=2$ using Claim \ref{claim:H_1_star}. Since $T$ can also be obtained by (repeatedly) applying Kelmans Operation on $T^*$ (in particular, on $H_2$), we see that $\lambda_1(T)>\lambda_1(T^*)$.
    Note that $\lambda_1(H_1-a) \ge \sqrt{tn-O(1)} > \sqrt{(1-t)n+O(1)} \ge \lambda_1(H_2)$ by Claim \ref{claim:u_1_u_2_degree_constant}. By the Interlacing Theorem, we have
    \[\lambda_2(T)\ge \lambda_2(T-a) = \min\{\lambda_1(H_1-a), \lambda_1(K_{1, {|H_2|-1}})\} \ge  \lambda_1(H_2)> \lambda_2^*.\]  
    This contradicts the choice of $T^*$. 
\end{proof}

\begin{claim}\label{claim:a_u1} Assume $a=u_1$. Then $T^*$ is a double comet with $d(u_1, u_2)\le 3$.  
\end{claim}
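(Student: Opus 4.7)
My plan is to establish two structural facts in order: first that $T^*$ is a double comet, and then that $d(u_1,u_2)\le 3$.

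For the double comet property, suppose for contradiction that some interior path vertex $v_j$ (with $2\le j\le k$) carries a leaf $\ell$. I would show that at least one of the two modifications $T_1'=T^*-v_j\ell+u_1\ell$ or $T_2'=T^*-v_j\ell+u_2\ell$ strictly improves $\Psi$. Applying Lemma \ref{lemma:spectral_sum} with $x,y$ as test vectors (the derivation of Lemma \ref{lemma:rotation_alpha} does not actually require $u\sim v$, only that $\ell$ is a leaf of $v_j$), one obtains
\[\Psi(T_i',\alpha)-\Psi(T^*,\alpha)\ge 2\alpha\, x_\ell(x_{u_i}-x_{v_j})+2(1-\alpha)\, y_\ell(y_{u_i}-y_{v_j}),\qquad i=1,2.\]
Substituting $x_\ell=x_{v_j}/\lambda_1^*$ and $y_\ell=y_{v_j}/\lambda_2^*$, and using the lower bounds $x_{u_1},|y_{u_2}|>\delta$ from Claim \ref{claim:x_max_entry} together with the upper bounds $x_{u_2},y_{u_1},x_{v_j},|y_{v_j}|=O(1/\sqrt n)$ from Claims \ref{claim:small_entry_remaining_vertices} and \ref{claim:yu1_xu2_small_entry}, the $i=1$ bound has a dominant positive $x$-piece of order $x_{v_j}/\sqrt n$ provided $|y_{v_j}|/x_{v_j}\lesssim \sqrt n$, and the $i=2$ bound has a dominant positive $y$-piece of order $|y_{v_j}|/\sqrt n$ provided $|y_{v_j}|/x_{v_j}\gtrsim 1/\sqrt n$. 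Since these two ranges together cover every possible ratio for large $n$, one of the two modifications strictly increases $\Psi$, contradicting maximality. Hence $T^*$ is a double comet $DC(k_1,k_2,\ell)$.

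For the distance bound, assume $\ell\ge 5$. I would compare $T^*$ to $T'=DC(k_1+1,k_2,\ell-1)$ on the same number $n$ of vertices. Contracting an interior edge of the path between $u_1$ and $u_2$ gives, by Lemma \ref{lemma:Contraction}, $\lambda_1(DC(k_1,k_2,\ell-1))\ge\lambda_1(T^*)$; attaching the freed vertex as a new leaf of $u_1$ then strictly increases $\lambda_1$ by Lemma \ref{lemma:edge_addition}, so $\lambda_1(T')>\lambda_1(T^*)$. The plan for $\lambda_2$ is to apply Lemma \ref{lemma:char_edge} iteratively along the interior path to write $\Phi(DC(k_1,k_2,\ell),x)=x^{n-2\lceil\ell/2\rceil}q_\ell(x^2)$ for an explicit polynomial $q_\ell$, extract asymptotic expansions of the two largest roots of $q_\ell$ and $q_{\ell-1}$ in the style of Proposition \ref{prop:special_double_comets}, and verify that the resulting $\Theta(n^{-3/2})$-correction to $\Psi$ strictly favors $T'$ over $T^*$ when $\alpha>\tfrac{1}{2}$. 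The main technical obstacle is precisely this step: the leading term $\sqrt{(\alpha^2+(1-\alpha)^2)(n-1)}$ of $\Psi$ is common to every double comet in the relevant family, so the strict improvement must be teased out of the lower-order correction.
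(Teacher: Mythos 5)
Your first step (no interior path vertex of the $u_1$--$u_2$ path carries a leaf) is a genuinely different route from the paper, which only proves $\deg(v_2)=\deg(v_k)=2$ and disposes of $k\ge 4$ by a separate construction. Your two-target dichotomy (move the leaf to $u_1$ or to $u_2$, depending on the ratio $|y_{v_j}|/x_{v_j}$) is essentially sound, and you are right that the proof of Lemma \ref{lemma:rotation_alpha} never uses $u\sim v$. But it needs one point you do not address: for the move to $u_2$, the $y$-contribution $y_\ell(y_{u_2}-y_{v_j})$ is positive only if $y_{v_j}<0$. This does hold here --- since $a=u_1$, the spectral edge is $u_1v_2$, so every $v_j$ with $j\ge 2$ lies in $H_2=S^-(y)$ --- but without that observation the sign of your ``dominant positive $y$-piece'' is unjustified.

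The genuine gap is in your second step. Having reduced to $T^*=DC(k_1,k_2,\ell)$ with $\ell\ge 5$, the entire burden of the contradiction rests on showing $\alpha\bigl(\lambda_1(T')-\lambda_1(T^*)\bigr)>(1-\alpha)\bigl(\lambda_2(T^*)-\lambda_2(T')\bigr)$, and you explicitly leave this ``main technical obstacle'' as a plan rather than a proof: you would need uniform-in-$\ell$ expansions of the two largest roots of $q_\ell$, and even the stated factorization $x^{n-2\lceil \ell/2\rceil}q_\ell(x^2)$ has the wrong exponent (the rank of $A(DC(k_1,k_2,\ell))$ is $2\lceil(\ell+1)/2\rceil$, e.g.\ $4$ for $\ell=2$). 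The paper avoids all asymptotics here: for $k\ge 4$ it builds a comparison tree $T=(H_1,u_1)\circ(H'',b)$ by a Kelmans operation on $H_2$, a contraction of $v_ku_2$, and the addition of one pendant vertex, and then shows \emph{both} eigenvalues strictly increase --- $\lambda_1$ by Lemmas \ref{lemma:Kelmans}, \ref{lemma:Contraction} and \ref{lemma:edge_addition}, and $\lambda_2$ by interlacing via $\lambda_2(T)\ge\lambda_2(T-b)\ge\min\{\lambda_1(H_1),\lambda_1(H''-b)\}>\lambda_1(H_2)>\lambda_2^*$ --- so $\Psi$ increases for every $\alpha$ with no cancellation to control. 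Until you carry out (or replace) the $\lambda_2$ comparison, your argument does not establish $d(u_1,u_2)\le 3$.
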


\begin{proof}
 Let $P: u_1=v_1, \ldots, v_{k+1}=u_2$ be the path joining $u_1$ and $u_2$ where $k=d(u_1, u_2)$. By Claim \ref{claim:u_1_u_2_caterpillar}, $T^*$ is a caterpillar on $P$. By assumption $v_1=a$ and $v_2=b$. If $k=1$, then clearly the claim holds. So assume $k\ge 2$. 
   
  We will first show that $\deg(v_2) = 2$. Suppose, to the contrary, that there is a leaf $w$ with neighbour $v_2$. Define $T=T^*-v_2w + u_1w$. Using Claim \ref{claim:small_entry_remaining_vertices}, observe that   
   \[ |y_{v_2}|= O\left(\frac{1}{\sqrt{n}}\right) \quad \text{and}\quad
   |y_w| = \frac{y_{v_2}}{\lambda_2^*} = O\left(\frac{1}{n}\right).\]
   Moreover, using Claim \ref{claim:x_max_entry}, we have 
   \[x_{v_2}\ge \frac{x_{u_1}}{\lambda_1^*}=\Omega\left(\frac{1}{\sqrt{n}}\right) \quad \text{and} \quad x_{w} = \frac{x_{v_2}}{\lambda_1^*}=\Omega\left(\frac{1}{n}\right).\]
   Using Claim \ref{claim:yu1_xu2_small_entry}, we have 
   \begin{align*}
     & \alpha x_w(x_{u_1}-x_{v_2}) + (1-\alpha)y_w(y_{u_1}-y_{v_2}) 
     \ge  \ \alpha \Omega\left(\frac{1}{n}\right) - (1-\alpha)O\left(\frac{1}{n^{\frac{3}{2}}}\right) > 0.
   \end{align*}
   By Lemma \ref{lemma:rotation_alpha}, we conclude that $\Psi(T, \alpha)>\Psi(T^*, \alpha)$, a contradiction. Therefore, $\deg(v_2)=2$. A similar argument also shows that $\deg(v_k)=2$. So, if $k\in \{2,3\}$, then claim holds. 
   
   Assume now that $k\ge 4$. We would like to arrive at a contradiction. Define $(H, b)$ to be the rooted tree (with root $b$) obtained from $(H_2, b)$ by applying Kelmans Operation from $v_{k-1}$ to $v_k$. Note that $\deg_H(v_k)\ge 3$ and $v_{k-1}$ is a leaf attached to $v_k$ in $H$. Furthermore, define $(H', b)$ to be the tree obtained from $(H, b)$ by contracting the edge $v_ku_2$ (i.e., move the neighbours of $v_k$ to $u_2$ and delete $v_k$). Finally, obtain $(H'', b)$ from $(H',b)$ by attaching a leaf (call it $w$) at vertex $v$, where 
   \[v=\begin{cases}
       v_3 & \text{if }k\ge 5;\\
       u_2 & \text{if }k=4.
   \end{cases}\]
   Define $T = (H_1, u_1)\circ (H'', b)$. We will argue that $\Psi(T, \alpha) > \Psi(T^*, \alpha)$. 

   Note that $T$ is obtained from $T^*$ by first applying Kelmans Operation, then contracting and later adding an edge. Using Lemmas \ref{lemma:Kelmans}, \ref{lemma:Contraction} and \ref{lemma:edge_addition}, we see that $\lambda_1(T)>\lambda_1^*$. Similarly, we have 
   \[ \lambda_1(H')>\lambda_1(H)>\lambda_1(H_2).\]
   Observe that $H''-b\cong H'$, and so $\lambda_1(H''-b)>\lambda_1(H_2)$. Using Interlacing Theorem and Claim \ref{claim:large_H_1}, we see that 
   \[\lambda_2(T) \ge \lambda_2(T-b)\ge \min\{\lambda_1(H_1), \lambda_1(H''-b)\} > \lambda_1(H_2).\]
   We conclude that $\Psi(T, \alpha)> \Psi(T^*, \alpha)$, giving us the desired contradiction.
\end{proof}

\begin{claim}\label{claim:double_comet_distance_two}
 $T^*$ is a double comet with $d(u_1, u_2)\le 2$. 
\end{claim}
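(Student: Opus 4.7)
The plan is to reduce to the two cases already handled. By Claim \ref{claim:a_not_u1}, if $a \neq u_1$, then $T^*$ is a double comet with $d(u_1, u_2) = 2$, so the conclusion holds. By Claim \ref{claim:a_u1}, if $a = u_1$, then $T^*$ is a double comet with $d(u_1, u_2) \in \{1, 2, 3\}$, so it suffices to rule out $d(u_1, u_2) = 3$. Suppose for contradiction that $a = u_1$ and $d(u_1, u_2) = 3$; then $T^* = DC(k_1, k_2, 4)$, where $k_1 = tn + O(1)$ and $k_2 = (1-t)n + O(1)$ by Claim \ref{claim:u_1_u_2_degree_constant}. Set $T' = DC(k_1, k_2 + 1, 3)$, which has the same order $n$ as $T^*$. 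I will show $\Psi(T', \alpha) > \Psi(T^*, \alpha)$ for large $n$, contradicting the maximality of $T^*$.

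First, an iterated application of Lemma \ref{lemma:char_edge} shows that the characteristic polynomial of $T^*$ factors as $x^{n-6}$ times a degree-$6$ factor in $x$, which in the variable $y = x^2$ becomes the cubic
\[
P(y) = y^3 - (n-1)y^2 + \bigl(k_1 k_2 + 2(k_1 + k_2) + 1\bigr)y - k_1 k_2.
\]
The three positive roots $y_1 > y_2 > y_3$ of $P$ are $\lambda_i(T^*)^2$ for $i = 1, 2, 3$. A direct computation gives $P(1) = n - 5 > 0$ for $n \geq 6$; combined with $y_1, y_2 \gg 1$ and $P(y) \to -\infty$ as $y \to 0^+$, this forces $y_3 \in (0, 1)$. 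A Newton correction at $y = 1$ yields $y_3 = 1 - 1/(t(1-t)n) + O(1/n^2)$, whence Vieta gives
\[
\lambda_1(T^*)^2 + \lambda_2(T^*)^2 = (n-1) - y_3 = n - 2 + O(1/n).
\]
By contrast, Proposition \ref{prop:DC_path_3} gives $\lambda_1(T')^2 + \lambda_2(T')^2 = n - 1$ exactly.

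The second ingredient is a Taylor expansion: for any tree $T$ with $\lambda_1(T)^2 = tn + O(1)$ and $\lambda_2(T)^2 = (1-t)n + O(1)$, writing $\sigma = \sqrt{\alpha^2 + (1-\alpha)^2}$, one has
\[
\Psi(T, \alpha) = \sigma\sqrt{n} + \sigma \cdot \frac{\lambda_1(T)^2 + \lambda_2(T)^2 - n}{2\sqrt{n}} + O(n^{-3/2}).
\]
To derive this, I would set $s = \lambda_1^2 + \lambda_2^2 = n + u$ and $p = \lambda_1^2 \lambda_2^2 = t(1-t)n^2 + vn + O(1)$, then expand $\alpha\sqrt{y_1} + (1-\alpha)\sqrt{y_2} = \tfrac{1}{2}\sqrt{s + 2\sqrt{p}} + (\alpha - \tfrac{1}{2})\sqrt{s - 2\sqrt{p}}$ to first order in $u/n$ and $v/n$. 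The coefficient of the $v$-contribution at order $n^{-1/2}$ simplifies to a constant multiple of $(1-\alpha)\sqrt{t} - \alpha\sqrt{1-t}$, which vanishes precisely at our chosen $t = \alpha^2/(\alpha^2 + (1-\alpha)^2)$. Hence only the $u$ term (equivalently, only $\lambda_1^2 + \lambda_2^2$) contributes at this order.

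Applying the expansion to $T^*$ (with $u_* = -2 + O(1/n)$) and $T'$ (with $u' = -1$),
\[
\Psi(T', \alpha) - \Psi(T^*, \alpha) = \frac{\sigma}{2\sqrt{n}}\bigl(1 + O(1/n)\bigr) + O(n^{-3/2}),
\]
which is strictly positive for all sufficiently large $n$, giving the desired contradiction. The main technical work is the Taylor expansion and the verification that the $v$-coefficient vanishes at our specific $t$; once that is established, the comparison reduces to the observation that $T'$ is a double comet whose nonzero spectrum is exactly $\{\pm\lambda_1, \pm\lambda_2\}$, so $\lambda_1^2 + \lambda_2^2 = n - 1$, whereas $T^*$ carries an additional pair of nonzero eigenvalues near $\pm 1$, stealing away roughly $1$ from the quantity $\lambda_1^2 + \lambda_2^2$.
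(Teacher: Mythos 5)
Your proof is correct and follows essentially the same strategy as the paper: reduce to ruling out $T^* \cong DC(k_1,k_2,4)$ via Claims \ref{claim:a_not_u1} and \ref{claim:a_u1}, then beat it with a nearby double comet by exploiting that, at $t = \alpha^2/(\alpha^2+(1-\alpha)^2)$, the first-order dependence of $\Psi$ on how $n$ is split between $\lambda_1^2$ and $\lambda_2^2$ cancels, so the $\Theta(1)$ deficit in $\lambda_1^2+\lambda_2^2$ for the $\ell=4$ comet dominates. The only differences are cosmetic: you compare against $DC(k_1,k_2+1,3)$ and locate the third root of the cubic factor via Vieta (note $P(0)=-k_1k_2$ is finite, not $-\infty$, but $P(0)<0<P(1)$ is all you need), whereas the paper compares against $DC(k_1+1,k_2+1,2)$ and bounds $\lambda_1^2,\lambda_2^2$ by sign checks of $\Phi(T^*,x)$ at $\sqrt{k_i+1.1}$.
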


\begin{proof}
    Suppose not. Then by Claims \ref{claim:a_not_u1} and \ref{claim:a_u1}, we have $d(u_1, u_2)= 3$. In particular, $T^* = DC(k_1, k_2, 4)$ for some $k_1 = tn \pm O(1) $ and $k_2 = (1-t)n\pm O(1)$. By Lemma \ref{lemma:char_edge}, the characteristic polynomial of $T^*$ is given by 
     \[\Phi(T^*, x) = x^{n-4}\left(x^2 - k_1-1\right)\left(x^2 - k_2-1\right) - x^{n-6}\left(x^2-k_1\right)\left(x^2-k_2\right).\]
     We see that $\Phi(T^*, x)>0$, whenever $x>\sqrt{k_1+1.1}$, and so $\lambda_1^*<\sqrt{k_1+1.1}$. Similarly, $\Phi(T^*, x)<0$, whenever $x\in \left(\sqrt{k_2+1.1}, \sqrt{k_1}\right)$. It follows that $\lambda_2^* < \sqrt{k_2+1.1}$. We conclude that 
     \begin{align*}
        \Psi(T^*, \alpha)  & \le \alpha\sqrt{k_1+1.1} + (1-\alpha)\sqrt{k_2+1.1}\\
        & = \alpha\sqrt{k_1+1.1} + (1-\alpha)\sqrt{n-4-k_1+1.1}\\
        & \le \sqrt{(\alpha^2+(1-\alpha)^2)(n-1.8)}\quad (\text{Cauchy-Schwarz})\\
        & = \sqrt{(\alpha^2+(1-\alpha)^2)(n-1)} - \frac{2\sqrt{(\alpha^2+(1-\alpha)^2)}}{5\sqrt{n-1}} + O\left(\frac{1}{n^{\frac{3}{2}}}\right).
     \end{align*}
    This contradicts the fact that 
    \[  \Psi(T^*, \alpha)\ge \sqrt{(\alpha^2+(1-\alpha)^2)(n-1)} - O\left(\frac{1}{n^{\frac{3}{2}}}\right)\]
    by Proposition \ref{prop:DC_path_2_alpha}. 
\end{proof}

\begin{claim}
    We have 
    \[ T^*\in \{DC(k, n-2-k, 2): k = \lfloor\kappa_2\rfloor, \ \lceil \kappa_2\rceil \}\cup \{DC(k, n-3-k, 3): k = \lfloor\kappa_3\rfloor, \ \lceil \kappa_3\rceil \}\]
    where $\kappa_2$ and $\kappa_3$ are as defined in Proposition \ref{prop:DC_path_2_alpha} and \ref{prop:DC_path_3_alpha}, respectively. 
\end{claim}

\begin{proof}
By Claim \ref{claim:double_comet_distance_two}, we see that 
\[T^* \in \{ DC(k,n-2-k,2), \ DC(\ell, n-3-\ell, 3) \}\]
for some $ k = tn + O_\alpha(1)$ and $\ell = tn + O_\alpha(1)$. So it only remains to optimize over these special comets which is done in Propositions \ref{prop:DC_path_2_alpha} and \ref{prop:DC_path_3_alpha}.
\end{proof}

The proof of Theorem \ref{thm:alpha_more_than_half} is complete.

\section{Asymptotics of $\Psi_n(\alpha)$}
\label{section:asymptotics_convex_combination}

In the previous sections, we determined the extremal trees for $\Psi_n(\alpha)$. Here, we analyze its limiting behaviour, which would give upper/lower bounds for $\Psi_n(\alpha)$. Recall that $\hat{\Psi}_n(\alpha) = \frac{\Psi_n(\alpha)}{\sqrt{n-1}}$ is a function in $\alpha$ from $[0, 1]\rightarrow [0,1]$. We observe the following.

\begin{proposition}\label{prop:Psi_piecewise_linear} For any $n\ge 2$, the map $\hat{\Psi}_n(\alpha)$ is a piecewise-linear increasing function.
\end{proposition}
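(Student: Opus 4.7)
The plan is to observe that $\Psi_n(\alpha)$ is the pointwise maximum of a \emph{finite} family of linear functions, one per tree in $\mathcal{T}(n)$, each of which has strictly positive slope. The piecewise-linear, convex, strictly increasing character of $\Psi_n$ then follows from standard facts about upper envelopes of linear functions, and dividing by $\sqrt{n-1}$ preserves all of these properties.

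More concretely, for each $T \in \mathcal{T}(n)$, define
\[
\ell_T(\alpha) = \alpha \lambda_1(T) + (1-\alpha)\lambda_2(T) = \lambda_2(T) + \alpha\bigl(\lambda_1(T)-\lambda_2(T)\bigr).
\]
This is an affine function of $\alpha$. Since $T$ is connected, the Perron--Frobenius theorem gives $\lambda_1(T) > \lambda_2(T)$, so the slope $\lambda_1(T)-\lambda_2(T)$ is strictly positive. Because $|\mathcal{T}(n)| < \infty$, the function
\[
\Psi_n(\alpha) = \max_{T \in \mathcal{T}(n)} \ell_T(\alpha)
\]
is the maximum of finitely many affine functions. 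By a standard elementary argument (partition $[0,1]$ into maximal intervals on which the same tree achieves the max), $\Psi_n$ is continuous and piecewise linear with finitely many pieces. On each piece the value of $\Psi_n$ coincides with some $\ell_{T^*}$, which has positive slope, so $\Psi_n$ is strictly increasing on each piece, and hence strictly increasing on $[0,1]$. Dividing through by the positive constant $\sqrt{n-1}$ yields the claim for $\hat{\Psi}_n$.

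I expect no real obstacle here: the only subtlety is remembering that a genuine tree has $\lambda_1 > \lambda_2$ strictly, which rules out any horizontal piece and upgrades ``non-decreasing'' to ``increasing.'' If the authors intend ``increasing'' in the weaker (non-decreasing) sense, even the Perron--Frobenius step can be skipped and one just uses $\lambda_1(T)\ge \lambda_2(T)$.
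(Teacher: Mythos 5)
Your proposal is correct and follows essentially the same route as the paper: write $\Psi_n$ as the pointwise maximum of the finitely many affine functions $\alpha\mapsto\alpha\lambda_1(T)+(1-\alpha)\lambda_2(T)$ and conclude piecewise linearity and monotonicity, then divide by $\sqrt{n-1}$. Your explicit appeal to Perron--Frobenius to get strictly positive slopes is a small addition the paper leaves implicit, but the argument is the same.
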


\begin{proof}
First observe that for any fixed $T\in \mathcal{T}(n)$, the function \[\frac{\Psi(T, \alpha)}{\sqrt{n-1}} = \frac{\alpha \lambda_1(T) + (1-\alpha)\lambda_2(T)}{\sqrt{n-1}}\] is a linear increasing function in $\alpha$. Since $\mathcal{T}(n)$ only has finitely many trees, there exist points $0 =\alpha_0 < \alpha_1 <  \ldots < \alpha_k=1$ and trees $T_1, \ldots, T_k \in \mathcal{T}(n)$ such that $\hat{\Psi}_n(\alpha) = \Psi(T_i, \alpha)$
whenever $\alpha_{i-1}\le \alpha \le \alpha_i$ for all $1\le i\le k$. This proves the assertion. 
\end{proof}

The proof of Proposition \ref{prop:Psi_piecewise_linear} is illustrated in Figure \ref{fig:Psi_example_n_6} for $n = 6$.

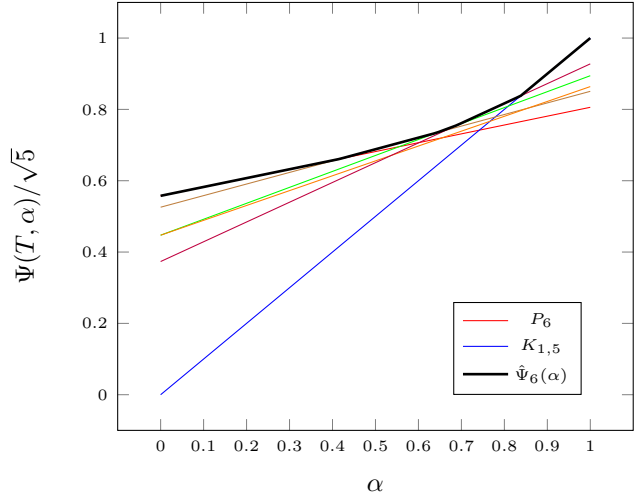
\begin{figure}
    \centering
    \begin{tikzpicture}[scale = 1]
        \begin{axis}[
        xlabel = {$\alpha$},
        ylabel = {\(\Psi(T, \alpha)/\sqrt{5}\)},
        xlabel style={font=\small},
        ylabel style={font=\small},
        xticklabel style={font=\tiny},
        yticklabel style={font=\tiny},
        xtick = {0, 0.1, 0.2, 0.3, 0.4, 0.5, 0.6, 0.7, 0.8, 0.9, 1},
        ytick = {0, 0.2, 0.4, 0.6, 0.8, 1},         legend style={at={(0.9,0.3)}, font =\tiny},
        ]
        \addplot [
        domain= 0:1, 
        samples=100, 
        color=red,
        ]{(1.80193*x + (1-x)*1.24697)/sqrt(5)};
        \addplot [
        domain= 0:1, 
        samples=100, 
        color=blue,
        ]{(2.23606*x + (1-x)*0)/sqrt(5)};
        \addplot [
        domain= 0:1, 
        samples=100, 
        color=brown,
        ]{(1.90211*x + (1-x)*1.17557)/sqrt(5)};
        \addplot [
        domain= 0:1, 
        samples=100, 
        color=green,
        ]{(2*x + (1-x)*1)/sqrt(5)};
        \addplot [
        domain= 0:1, 
        samples=100, 
        color=orange,
        ]{(1.93185*x + (1-x)*1)/sqrt(5)};
        \addplot [
        domain= 0:1, 
        samples=100, 
        color=purple,
        ]{(2.07431*x + (1-x)*0.83499)/sqrt(5)}; 
        \addplot [
        domain= 0:1, 
        samples=100, 
        color=black,
        line width = 1pt,
        ]{max((1.80193*x + (1-x)*1.24697), (2.23606*x + (1-x)*0), (1.90211*x + (1-x)*1.17557), (2*x + (1-x)*1), (1.93185*x + (1-x)*1), (2.07431*x + (1-x)*0.83499))/sqrt(5)};
        \legend{$P_6$, $K_{1,5}$, , , , , $\hat{\Psi}_6(\alpha)$}
        \end{axis}
    \end{tikzpicture}
    \caption{The black curve represents $\hat{\Psi}_6(\alpha)$. The different colored lines represent $\frac{\Psi(T, \alpha)}{\sqrt{5}}$ for different trees $T$ on 6 vertices.}
    \label{fig:Psi_example_n_6}
\end{figure}

We now find the (point-wise) limit of $\hat{\Psi}_n(\alpha)$ as $n\rightarrow \infty$, thus proving Theorem \ref{thm:asymptotic_convex_combination} 

\begin{proof}[Proof of Theorem \ref{thm:asymptotic_convex_combination}]
When $\alpha = 0, \frac{1}{2}, 1$, the assertion is clear.

Now, suppose that $0 < \alpha < \frac{1}{2}$. From the proof of Theorem \ref{thm:alpha_less_than_half}, we know that the extremal tree $T^*$ for $\Psi_n(\alpha)$ has \[\lambda_i^* = \sqrt{\frac{n}{2}\pm O(1)} \qquad (i=1,2).\] 
It follows that 
\[\lim_{n \rightarrow \infty}\hat{\Psi}_n(\alpha) = \lim_{n\rightarrow \infty} \frac{\sqrt{\frac{n}{2}\pm O(1)}}{\sqrt{n-1}} = \sqrt{\frac{1}{2}}.\]

Next, suppose that $\frac{1}{2}<\alpha< 1$. Using Theorem \ref{thm:alpha_more_than_half}, and Propositions \ref{prop:DC_path_2_alpha} and \ref{prop:DC_path_3_alpha}, we have \[\Psi_n(\alpha) = \sqrt{(\alpha^2+(1-\alpha)^2)(n-1)} + o(\sqrt{n}).\] 
We conclude that 
\[ \lim_{n\rightarrow \infty} \hat{\Psi}_n(\alpha) = \lim_{n\rightarrow \infty}\frac{\Psi_n(\alpha)}{\sqrt{n-1}} = \sqrt{\alpha^2 + (1-\alpha)^2}.\qedhere\]
\end{proof}

We illustrate how quickly $\hat{\Psi}_n(\alpha)$ approaches its limit by comparing $\hat{\Psi}_{26}(\alpha)$ with $\lim_{n\to \infty}\hat{\Psi}_n(\alpha)$ in Figure \ref{fig:Psi_limit_vs_n_26}.

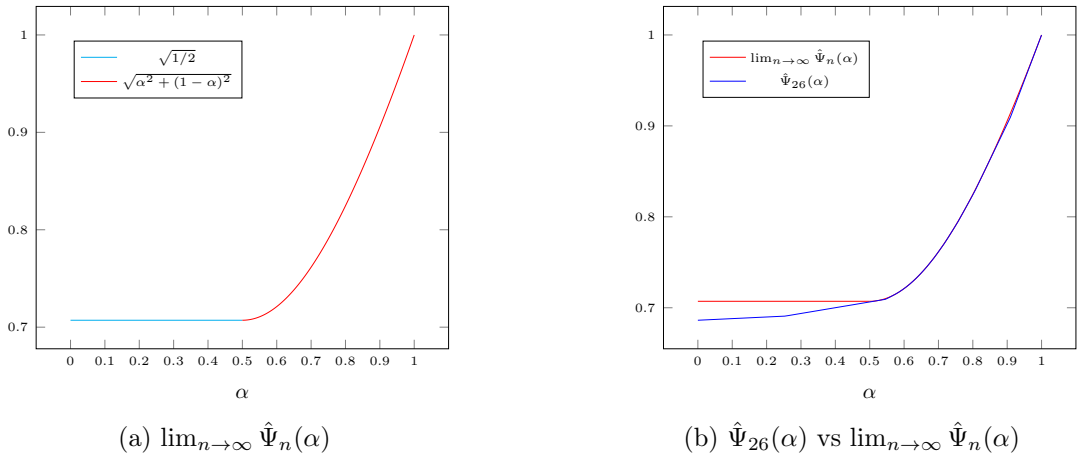
\begin{figure}
\begin{subfigure}{0.5\textwidth}
    \centering 
    \begin{tikzpicture}[scale = 0.8]
    \begin{axis}[
        xlabel = {$\alpha$},
        xlabel style={font=\small},
        ylabel style={font=\small},
        xticklabel style={font=\tiny},
        yticklabel style={font=\tiny},
        xtick = {0, 0.1, 0.2, 0.3, 0.4, 0.5, 0.6, 0.7, 0.8, 0.9, 1},
        ytick = {0, 0.1, 0.2, 0.3, 0.4, 0.5, 0.6, 0.7, 0.8, 0.9, 1},        
        legend style={at={(0.5,0.9)}, font =\tiny},
        ]
        \addplot[
        domain= 0:0.5, 
        samples=100, 
        color=cyan,
        ]{sqrt(1/2)};
        \addlegendentry{$\sqrt{1/2}$}
        \addplot[
        domain= 0.5:1, 
        samples=100, 
        color=red,
        ]{sqrt(x^2 + (1-x)^2)};
        \addlegendentry{$\sqrt{\alpha^2 + (1-\alpha)^2}$} 
        \end{axis}
    \end{tikzpicture}
    \subcaption{$\lim_{n\to \infty}\hat{\Psi}_n(\alpha)$}
\end{subfigure}
\begin{subfigure}{0.5\textwidth}
    \centering
    \begin{tikzpicture}[scale = 0.8]
    \begin{axis}[
        xlabel = {$\alpha$},
        xlabel style={font=\small},
        ylabel style={font=\small},
        xticklabel style={font=\tiny},
        yticklabel style={font=\tiny},
        xtick = {0, 0.1, 0.2, 0.3, 0.4, 0.5, 0.6, 0.7, 0.8, 0.9, 1},
        ytick = {0, 0.1, 0.2, 0.3, 0.4, 0.5, 0.6, 0.7, 0.8, 0.9, 1},        
        legend style={at={(0.5,0.9)}, font =\tiny},
        ]
        \addplot[
        domain= 0:0.5, 
        samples=100, 
        color=red,
        ]{sqrt(1/2)};
        \addplot[
        domain= 0.5:1, 
        samples=100, 
        color=red,
        ]{sqrt(x^2 + (1-x)^2)};
        \addlegendentry{limit}
        \addplot [
        domain= 0:1, 
        samples=100, 
        color=blue,
        ]{max(
        (x*5 + (1-x)*0),
        (x*4.903406609757669 + (1-x)*0.9780611531927870),
        (x*4.805705988739693 + (1-x)*1.380286184018175),
        (x*4.707080194548844 + (1-x)*1.686237243713357),
        (x*4.607832961196238 + (1-x)*1.941101594897472),
        (x*4.508462922244040 + (1-x)*2.161888544479279),
        (x*4.409787069091307 + (1-x)*2.356645498431000),
        (x*4.313151725579202 + (1-x)*2.529174211503263),
        (x*4.220790554667138 + (1-x)*2.680471431228596),
        (x*4.136396043495647 + (1-x)*2.808954925119582),
        (x*4.065849096332680 + (1-x)*2.910132492834429 ),
        (x*4.017468723542258 + (1-x)*2.976565983706685 ),
        (x*4 + (1-x)*3),
        (x*3.690262048791372 + (1-x)* 3.373716942965149),
        (x*3.520892626084280+ (1-x)*3.431375296157698)
        )/sqrt(25)};
        \legend{$\lim_{n\to \infty}\hat{\Psi}_n(\alpha)$, ,$\hat{\Psi}_{26}(\alpha)$}
        \end{axis}
    \end{tikzpicture}
    \subcaption{$\hat{\Psi}_{26}(\alpha)$ vs $\lim_{n\to \infty}\hat{\Psi}_n(\alpha)$}
\end{subfigure}
    \caption{Limit of $\hat{\Psi}_n(\alpha)$}
    \label{fig:Psi_limit_vs_n_26}
\end{figure}

\section{Minimum spectral sum of trees}
\label{section:minimum_spectral_sum}

In this section, we prove Theorem \ref{thm:spectral_sum_min_trees}. For $n\le 15$, it is verified computationally that the star minimizes the spectral sum in $\mathcal{T}(n)$. For $16\le n\le 18$, we verify that the path minimizes the spectral sum. The SageMath code for verification is \href{https://github.com/Shivaramkratos/Codes_misc/blob/main/convex_sum_specific_check.sage}{here}.

So assume $n\ge 19$. Let $T^{\#}\in \mathcal{T}(n)$ be such that \[ \lambda_1(T^{\#}) + \lambda_2(T^{\#}) = \min\{\lambda_1(T) + \lambda_2(T) : T\in \mathcal{T}(n)\}.\]
Throughout this section, $\lambda_1^{\#}$ and $\lambda_2^{\#}$ will denote $\lambda_1(T^{\#})$ and $\lambda_2(T^{\#})$, respectively. Clearly, 
\begin{equation}\label{eq:path_min_spectralsum}
\lambda_1^{\#} + \lambda_2^{\#} \le \lambda_1(P_n) + \lambda_2(P_n) = 2\cos\left(\frac{\pi}{n+1}\right) + 2\cos\left(\frac{2\pi}{n+1}\right)< 4,
\end{equation}
using Proposition \ref{prop:path_spectrum}. We now prove a series of claims, culminating in a proof of Theorem \ref{thm:spectral_sum_min_trees}.

\begin{claim}
    $\Delta(T^{\#})\le 3$.
\end{claim}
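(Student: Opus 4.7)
The plan is to argue by contradiction: assume $\Delta(T^{\#})\ge 4$ and derive $\lambda_1^{\#}+\lambda_2^{\#}\ge 4$, contradicting \eqref{eq:path_min_spectralsum}. Pick $v\in V(T^{\#})$ with $\deg(v)=d\ge 4$. The induced subgraph on $N[v]$ is the star $K_{1,d}$ with spectral radius $\sqrt{d}\ge 2$, so by the Interlacing Theorem $\lambda_1^{\#}\ge 2$. The remaining task is to show $\lambda_2^{\#}\ge 2$.

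For this, I would invoke Lemma \ref{lemma:spectral_sum}: it is enough to exhibit orthogonal unit vectors $x,y\in\mathbb{R}^{V(T^{\#})}$ with $x^{\top}A(T^{\#})x+y^{\top}A(T^{\#})y\ge 4$. A natural first attempt is to take $x$ as the Perron vector of the star $K_{1,d}$ at $v$, extended by zero (so $x^{\top}Ax=\sqrt{d}\ge 2$), and $y$ as the Perron vector of an induced subgraph $H\subseteq T^{\#}-N[v]$ with $\lambda_1(H)\ge 2$, also extended by zero (so $y\perp x$ automatically). Since $|V(T^{\#})\setminus N[v]|\ge n-d-1\ge 14$, the forest $T^{\#}-N[v]$ is substantial; a sufficient condition for the second piece to exist is that some component of it contain $K_{1,4}$ or one of the extended Dynkin trees $\tilde D_k$, $\tilde E_6$, $\tilde E_7$, $\tilde E_8$ (each of which has $\lambda_1=2$), in which case we are done.

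The main obstacle is the regime in which every component of $T^{\#}-N[v]$ is a path or a small Dynkin tree ($D_k$, $E_6$, $E_7$, $E_8$), so every such component has spectral radius strictly less than $2$. Then $T^{\#}$ looks like a spider or broom: a single branching vertex $v$ of degree $d$, with pendant Dynkin-shaped subtrees, and the disjoint-subgraph bound $\sqrt{d}+\lambda_1(H)$ can fall short of $4$. I would close this case by a local transformation argument --- applying the Hanging Path Lemma \ref{lemma:hanging_path} to unbalance two pendant paths at $v$, or the Kelmans Operation in Lemma \ref{lemma:Kelmans} to restructure a branch, producing a tree $T'$ on $n$ vertices with strictly smaller $\lambda_1+\lambda_2$ and contradicting the minimality of $T^{\#}$. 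Alternatively, one can refine the Rayleigh-quotient bound by extending $x$ from $N[v]$ along a longest pendant branch at $v$, giving $x^{\top}Ax$ strictly larger than $\sqrt{d}$, and supporting $y$ on a different branch; the hypothesis $n\ge 19$ is used precisely to guarantee that the branches are long enough for the resulting strict inequality $x^{\top}Ax+y^{\top}Ay\ge 4$ to hold. Small-$n$ behaviour (where the minimum is attained by a star for $n\le 15$) confirms that some lower bound on $n$ is necessary, so the main difficulty of the argument is tracking this transition carefully.
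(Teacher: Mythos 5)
Your high-level strategy (derive $\lambda_1^{\#}+\lambda_2^{\#}\ge 4$ and contradict \eqref{eq:path_min_spectralsum}) matches the paper's, but the paper closes the argument by brute force: it verifies computationally that \emph{every} tree of order $18$ with maximum degree at least $4$ has spectral sum at least $4$, and then pulls this up to $T^{\#}$ (which has order $n\ge 19$) by the Interlacing Theorem. Your proposal instead tries to prove the bound analytically, and it correctly isolates the hard case --- but it does not resolve it. When every component of $T^{\#}-N[v]$ is a path or a Dynkin tree (the spider/broom regime), the two-disjoint-subgraphs bound gives only $\lambda_2^{\#}\ge \lambda_1(P_k)<2$, so $\sqrt{d}+\lambda_1(P_k)$ can be below $4$ for $d=4$, and one genuinely needs a quantitative comparison of $\lambda_1^{\#}-2$ against $2-\lambda_2^{\#}$ and against $4-\lambda_1(P_n)-\lambda_2(P_n)$. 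You acknowledge this but offer only a sketch, and the tools you name do not obviously do the job: the Hanging Path Lemma (Lemma \ref{lemma:hanging_path}) controls only $\lambda_1$, not $\lambda_2$, so unbalancing pendant paths at $v$ does not by itself produce a tree with smaller spectral sum; and the Kelmans Operation (Lemma \ref{lemma:Kelmans}) \emph{increases} $\lambda_1$, which is the wrong direction for contradicting minimality. Everywhere else in the paper where such transformations are used to bound the spectral sum from below or above, the effect on $\lambda_2$ is tracked separately via characteristic polynomials or interlacing; your sketch omits this entirely.

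So the proposal has a genuine gap precisely in the case it identifies as the main obstacle. The claim is still true there (e.g., for the $4$-legged spider one can check $\lambda_1\to 4/\sqrt{3}$ while $\lambda_2\to 2$, so the sum exceeds $4$ asymptotically), but establishing it for all $n\ge 19$ requires either the explicit quantitative estimates you defer, or the paper's finite computation on $18$-vertex trees combined with interlacing. As written, your argument does not constitute a proof.
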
 

\begin{proof}
    For any tree $T$ of order $18$ with $\Delta(T)\ge 4$, it can be checked (see the SageMath code \href{https://github.com/Shivaramkratos/Codes_misc/blob/main/convex_sum_specific_check.sage}{here}) that $\lambda_1(T)+\lambda_2(T)\ge 4$. So if $\Delta(T^{\#})\ge 4$, then one can find an induced subtree of order $18$ with maximum degree at least $4$ in $T^{\#}$ implying $\lambda_1^{\#}+\lambda_2^{\#}\ge 4$ by Interlacing Theorem. This contradicts \eqref{eq:path_min_spectralsum}.
\end{proof}

Let $y$ denote a unit $\lambda_2^{\#}$-eigenvector of $T^{\#}$ with minimal support, and $(H_1,a)$ and $(H_2,b)$ be rooted subtrees of $T^{\#}$ as given in Theorem \ref{thm:spectral_center}. 

\begin{claim}\label{claim:H_i_is_a_path}
    $H_1$ and $H_2$ are paths.
\end{claim}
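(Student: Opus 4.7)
Suppose, for contradiction, that $H_1$ is not a path (the case of $H_2$ is entirely symmetric). Since $\Delta(T^\#)\le 3$ forces $\Delta(H_1)\le 3$, the tree $H_1$ must contain a vertex of degree exactly $3$; call it $v$. Since $v$'s three $H_1$-neighbours are already $T^\#$-neighbours and $\Delta(T^\#)\le 3$, in fact $v$ has degree exactly $3$ in $T^\#$ with all its $T^\#$-neighbours lying in $H_1$. I would choose $v$ to maximise, among such branch vertices, its $T^\#$-distance to the spectral centre of $T^\#$ (the spectral vertex in Theorem~\ref{thm:spectral_center}(i), or the endpoint $a\in V(H_1)$ of the spectral edge in Theorem~\ref{thm:spectral_center}(ii)). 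By this maximality together with $\Delta(T^\#)\le 3$, the two subtrees of $T^\#-v$ that do not contain the spectral centre contain no degree-3 vertex, so they are pendant paths of $T^\#$ attached at $v$, of edge-lengths $k\ge \ell \ge 1$.

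The plan is to produce a tree $T'$ on $V(T^\#)$ with $\lambda_1(T')+\lambda_2(T')<\lambda_1^\#+\lambda_2^\#$, contradicting the minimality of $T^\#$. Define $T'$ by $\ell$ successive applications of the Hanging Path move at $v$: each step detaches the current leaf at the end of the shorter pendant path and reattaches it as an extension of the longer pendant path. In $T'$, the former branching at $v$ has collapsed into a single pendant path of $k+\ell$ edges, and $v$ now has degree $2$. Iterated application of Lemma~\ref{lemma:hanging_path} yields $\lambda_1(T')<\lambda_1^\#$, so to finish it is enough to show $\lambda_2(T')\le \lambda_2^\#$.

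In case~(i) of Theorem~\ref{thm:spectral_center}, this is straightforward: the spectral vertex $v_0$ is unaffected by the modification (which takes place strictly inside $H_1\setminus\{v_0\}$) and remains a cut vertex of $T'$, separating a modified subtree $H_1'$ from an unchanged $H_2$. The Interlacing Theorem then gives
\[\lambda_2(T')\;\le\; \lambda_1(T'-v_0)\;=\;\max\{\lambda_1(H_1'),\lambda_1(H_2)\}.\]
Because the pendant paths at $v$ inside $T^\#$ are also pendant paths of $H_1$ at $v$, applying Lemma~\ref{lemma:hanging_path} iteratively inside $H_1$ itself yields $\lambda_1(H_1')<\lambda_1(H_1)=\lambda_2^\#$; combined with $\lambda_1(H_2)=\lambda_2^\#$ from Theorem~\ref{thm:spectral_center}(i) this gives $\lambda_2(T')\le \lambda_2^\#$, as required.

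The main obstacle lies in case~(ii) (spectral edge $ab$ with $a\in V(H_1)$): here $\lambda_1(H_2)>\lambda_2^\#$, so the interlacing bound obtained by cutting $T'$ at $a$ is too weak, while $\lambda_1(H_1')$ may still exceed $\lambda_2^\#$, so cutting at $b$ is insufficient on its own. My plan for this case is to combine the two interlacing bounds with the characterisation of $\lambda_2^\#$ as the unique root in $\bigl(\max\{\lambda_1(H_1-a),\lambda_1(H_2-b)\},\min\{\lambda_1(H_1),\lambda_1(H_2)\}\bigr)$ of the implicit equation coming from Theorem~\ref{thm:spectral_center}(ii), and to show that the analogous equation for $T'$ has a strictly smaller root in the corresponding interval. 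The required monotonicity is underwritten by applying Lemma~\ref{lemma:hanging_path} both inside $H_1$ and inside the component of $H_1-a$ containing $v$, which simultaneously decreases $\lambda_1(H_1)$ and $\lambda_1(H_1-a)$. A backup route, should the polynomial-root comparison become opaque, is to construct orthonormal test vectors for $T'$ out of the $\lambda_1$- and $\lambda_2$-eigenvectors of $T^\#$ together with the Perron vector of $H_1'$ (reweighted to restore orthogonality across the spectral edge) and then invoke Lemma~\ref{lemma:spectral_sum}.
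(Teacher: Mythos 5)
Your reduction to a deepest branch vertex $v$ carrying two pendant paths, the hanging-path merge defining $T'$, and the conclusion $\lambda_1(T')<\lambda_1^{\#}$ all match the paper's argument, and your interlacing proof of $\lambda_2(T')\le\lambda_2^{\#}$ in the spectral-vertex case is complete and correct. The genuine gap is the spectral-edge case, which you yourself flag as the main obstacle and for which you offer only a plan. That plan does not go through as stated: Theorem~\ref{thm:spectral_center}(ii) supplies inequalities, not an ``implicit equation'' whose unique root in the stated interval is $\lambda_2^{\#}$; even if you manufacture one via $\Phi(T,x)=\Phi(H_1,x)\Phi(H_2,x)-\Phi(H_1-a,x)\Phi(H_2-b,x)$, the assertion that simultaneously lowering $\lambda_1(H_1)$ and $\lambda_1(H_1-a)$ pushes the relevant root down is precisely what must be proved and is not a formal consequence of Lemma~\ref{lemma:hanging_path} (the root depends on the full polynomials, not just on these two spectral radii). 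Your backup route is also directionally wrong: Lemma~\ref{lemma:spectral_sum} is a maximum, so test vectors for $T'$ built from eigenvectors of $T^{\#}$ can only \emph{lower}-bound $\lambda_1(T')+\lambda_2(T')$, which is the wrong inequality when you are trying to beat a minimizer.

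The paper closes this case --- in fact it handles both cases at once, with no spectral-vertex/spectral-edge split --- by a short characteristic-polynomial computation. Writing $u_3$ for the neighbour of the branch vertex $u$ on the path towards $a$, $B$ for the component of $T^{\#}-uu_3$ containing $u_3$, and $P_k$, $P_\ell$ for the two pendant paths, Lemma~\ref{lemma:char_edge} applied to the edge $uu_3$ in both $T^{\#}$ and $T'$, together with the path identity $\Phi(P_k,x)\Phi(P_\ell,x)-\Phi(P_{k+\ell},x)=\Phi(P_{k-1},x)\Phi(P_{\ell-1},x)$ and $\Phi(T^{\#},\lambda_2^{\#})=0$, gives
\[
\Phi(T',\lambda_2^{\#})=\Phi(B-u_3,\lambda_2^{\#})\,\Phi(P_{k-1},\lambda_2^{\#})\,\Phi(P_{\ell-1},\lambda_2^{\#}).
\]
A sign analysis ($\lambda_1(B-u_3)>\lambda_2^{\#}>\lambda_2(B-u_3)$ makes the first factor negative, while $\lambda_2^{\#}>\lambda_1(H_1-a)\ge\lambda_1(P_{k+\ell+1})$ makes the other two positive) yields $\Phi(T',\lambda_2^{\#})<0$, hence $\lambda_2(T')<\lambda_2^{\#}$. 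To complete your proof you need to supply an argument of this concreteness for the spectral-edge case; the case split you introduced is otherwise unnecessary.
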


\begin{proof} We will prove the assertion for $H_1$; a similar argument works for $H_2$.

Suppose, to the contrary, that $H_1$ is not a path. Then there exists $u\in V(H_1)$ such that $\deg(u)=3$. Also, $u\neq a$ since $\Delta(T^{\#})\le 3$ and $a$ has a neighbour outside $H_1$. Moreover, we can choose $u$ such that the components of $H_1-u$ not containing $a$ are paths. Let $N(u)=\{u_1, u_2, u_3\}$ where $u_3$ lies on the path joining $u$ and $a$ ($u_3$ could be $a$). Let $w_i$ be the terminal vertices of the paths starting at $u_i$ and not containing $u$ for $i\in \{1,2\}$, see Figure \ref{fig:smaller_spectral_sum_1}. Define $T' = T^{\#} - uu_2 + w_1u_2$. We will argue that $\lambda_1(T') + \lambda_2(T') < \lambda_1^{\#} + \lambda_2^{\#}$, giving us the desired contradiction.

It is clear that $\lambda_1^{\#} > \lambda_1(T')$ using the Hanging Path Lemma \ref{lemma:hanging_path}. So, it suffices to show that $\lambda_2^{\#} \ge \lambda_2(T')$. We consider the following cases:

\textbf{Case 1:} $T^{\#}$ has a spectral vertex $v$.

Let $H_1'$ denote the component of $T'-v$ containing $u$. Then, by the Hanging Path Lemma \ref{lemma:hanging_path}, we have $\lambda_1(H_1')<\lambda_1(H_1)$. Using Interlacing Theorem, we see that
\[ \lambda_2(T')\le \max\{\lambda_1(H_1'), \lambda_1(H_2)\} \le \max\{\lambda_1(H_1), \lambda_1(H_2)\} = \lambda_2^{\#}.\]

\textbf{Case 2:} $T^{\#}$ has a spectral edge $ab$.

Let $B$ be the component of $T^{\#} - uu_3$ (and also $T'-uu_3$) containing $u_3$. Let the path $u_1 \ldots w_1 \cong P_k$ and $u_2\ldots w_2 \cong P_{\ell}$. Now, using Lemma \ref{lemma:char_edge} with edge $uu_3$, we get 
\begin{align*}
    \Phi(T^{\#}, x) & = \Phi(B,x)\Phi(P_{k + \ell + 1},x) - \Phi(B-u_3, x)\Phi(P_k,x)\Phi(P_{\ell},x),\\
    \Phi(T', x) & = \Phi(B,x)\Phi(P_{k + \ell + 1},x) - \Phi(B-u_3, x)\Phi(P_{k+\ell},x).
\end{align*}
Hence, 
\begin{align*}
    \Phi(T', \lambda_2^{\#}) & = \Phi(T', \lambda_2^{\#}) - \Phi(T^{\#}, \lambda_2^{\#}) \\
    & =\Phi(B-u_3, \lambda_2^{\#})\left[ \Phi(P_k, \lambda_2^{\#})\Phi(P_{\ell}, \lambda_2^{\#}) - \Phi(P_{k+\ell}, \lambda_2^{\#}) \right]\\
    & = \Phi(B-u_3, \lambda_2^{\#})\Phi(P_{k-1}, \lambda_2^{\#})\Phi(P_{\ell-1}, \lambda_2^{\#}).
\end{align*}

It is easily seen that $\lambda_1(B-u_3) > \lambda_2^{\#} > \lambda_2(B-u_3)$, which implies $\Phi(B-u_3, \lambda_2^{\#})<0$. Also, $\Phi(P_{k-1}, \lambda_2^{\#})>0$ and $\Phi(P_{\ell-1}, \lambda_2^{\#}) > 0$, since $\lambda_2^{\#}> \lambda_1(H_1 - a)\ge \lambda_1(P_{k+\ell+1})$. It follows that $\Phi(T', \lambda_2^{\#})< 0$. Since $\lambda_3(T') < \lambda_2^{\#} < \lambda_1(T')$, we conclude that $\lambda_2(T')<\lambda_2^{\#}$. This completes the proof.
\end{proof}

\begin{figure}
\begin{subfigure}{0.49\textwidth}
\centering
\begin{tikzpicture}[scale = 0.7]

\draw [rotate around={0:(3.21,0)}, fill=gray!10] (3.21,0) ellipse (2.2754913851752945cm and 1.404906062342599cm);

\draw (0,0)-- (-0.78,0.78);
\draw (0,0)-- (-0.76,-0.78);
\draw (0,0)-- (1,0);
\draw [dashed] (1,0)-- (3,0);
\draw [dashed] (-0.78,0.78)-- (-2,2);
\draw [dashed] (-0.76,-0.78)-- (-2,-2);

\draw [fill=black] (0,0) circle (2pt);
\draw (0,0.3) node {$u$};
\draw [fill=black] (-0.78,0.78) circle (2pt);
\draw (-0.65,1) node {$u_1$};
\draw [fill=black] (-0.76,-0.78) circle (2pt);
\draw (-0.65,-1.1) node {$u_2$};
\draw [fill=black] (1,0) circle (2pt);
\draw (1,0.3) node {$u_3$};
\draw [fill=black] (3,0) circle (2pt);
\draw (3,0.3) node {$a$};
\draw [fill=black] (-2,2) circle (2pt);
\draw (-2, 2.3) node {$w_1$};
\draw [fill=black] (-2,-2) circle (2pt);
\draw (-2,-2.3) node {$w_2$};
\draw (3,1.1) node {$B$};
\end{tikzpicture}
\caption{$T^{\#}$}
\end{subfigure}
\begin{subfigure}{0.49\textwidth}
\centering
\begin{tikzpicture}[scale = 0.7]

\draw [rotate around={0:(3.21,0)}, fill=gray!10] (3.21,0) ellipse (2.2754913851752945cm and 1.404906062342599cm);

\draw (0,0)-- (-1,0);
\draw[dashed] (0,0)-- (-2, 0);
\draw (0,0)-- (1,0);
\draw [dashed] (1,0)-- (3,0);
\draw (-2,0)-- (-3,0);
\draw [dashed] (-3,0)-- (-4,0);

\draw [fill=black] (0,0) circle (2pt);
\draw (0,0.3) node {$u$};
\draw [fill=black] (-1,0) circle (2pt);
\draw (-1,0.3) node {$u_1$};
\draw [fill=black] (-3,0) circle (2pt);
\draw (-3,0.3) node {$u_2$};
\draw [fill=black] (1,0) circle (2pt);
\draw (1,0.3) node {$u_3$};
\draw [fill=black] (3,0) circle (2pt);
\draw (3,0.3) node {$a$};
\draw [fill=black] (-2,0) circle (2pt);
\draw (-2, 0.3) node {$w_1$};
\draw [fill=black] (-4,0) circle (2pt);
\draw (-4,0.3) node {$w_2$};
\draw (3,1.1) node {$B$};
\end{tikzpicture} 
\caption{$T'$}
\end{subfigure}
  \caption{$T^{\#}$ and $T'$}
  \label{fig:smaller_spectral_sum_1}
\end{figure}

We are now ready to finish the proof. We treat the spectral vertex and spectral edge cases separately.

\begin{claim}
    If $T^{\#}$ has a spectral vertex $v$, then $T^{\#}$ is a path.
\end{claim}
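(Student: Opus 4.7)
The plan is to reduce the claim to the Lov\'asz--Pelik\'an inequality (Theorem \ref{thm:lambda_one_max_trees}) by pinning down $\lambda_2^{\#}$ exactly in terms of a path spectrum, and then invoking minimality.

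By Theorem \ref{thm:spectral_center}$(i)$, the spectral-vertex hypothesis gives $\lambda_1(H_1)=\lambda_1(H_2)=\lambda_2^{\#}$, and Claim \ref{claim:H_i_is_a_path} tells us that both $H_1$ and $H_2$ are paths. Since $\lambda_1(P_k)=2\cos(\pi/(k+1))$ is strictly increasing in $k$, equality of their Perron eigenvalues forces $|H_1|=|H_2|=:k$, and so
\[\lambda_2^{\#}=2\cos\!\Big(\tfrac{\pi}{k+1}\Big).\]

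The first and most delicate step is to show $V(T^{\#})=V(H_1)\cup\{v\}\cup V(H_2)$, i.e.\ $n=2k+1$. Since the subtree $(H_1,a)\circ v\circ (H_2,b)$ already accounts for $2k+1$ vertices, any surplus vertex must lie in $S^0(y)\setminus\{v\}$. Using $\Delta(T^{\#})\le 3$ together with the local eigenvalue equations at $v$ and at any such extra zero-vertex, one can restrict the possible configurations (essentially to a pendant attached to $v$); a Hanging Path transformation (Lemma \ref{lemma:hanging_path}) absorbing the pendant into $H_1$ or $H_2$, together with a characteristic-polynomial computation via Lemma \ref{lemma:char_edge} on the edge $av$ (in the spirit of the proof of Claim \ref{claim:H_i_is_a_path}), shows that the resulting tree has strictly smaller spectral sum, contradicting the minimality of $T^{\#}$. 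This verification is the main technical obstacle.

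Once $n=2k+1$ is in hand, the trigonometric identity
\[\lambda_2^{\#}=2\cos\!\Big(\tfrac{\pi}{k+1}\Big)=2\cos\!\Big(\tfrac{2\pi}{n+1}\Big)=\lambda_2(P_n)\]
drops out immediately. Since $\Delta(T^{\#})\le 3<n-1$, we have $T^{\#}\ne K_{1,n-1}$, so Theorem \ref{thm:lambda_one_max_trees} yields $\lambda_1^{\#}\ge\lambda_1(P_n)$, with equality if and only if $T^{\#}\cong P_n$. Adding the two inequalities gives
\[\lambda_1^{\#}+\lambda_2^{\#}\ge\lambda_1(P_n)+\lambda_2(P_n).\]
The minimality of $T^{\#}$ forces equality, whence $\lambda_1^{\#}=\lambda_1(P_n)$, and the uniqueness clause of Theorem \ref{thm:lambda_one_max_trees} then yields $T^{\#}\cong P_n$, as desired.
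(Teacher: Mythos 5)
Your final step is clean and correct \emph{once} you know $n=2k+1$: with $|H_1|=|H_2|=k$ paths and no other zero vertices, $\lambda_2^{\#}=2\cos(\pi/(k+1))=2\cos(2\pi/(n+1))=\lambda_2(P_n)$, and then Theorem \ref{thm:lambda_one_max_trees} plus minimality forces $T^{\#}\cong P_n$; this is arguably slicker than the paper's separate treatment of $\deg(a)=3$. The problem is the step you defer as ``the main technical obstacle'': it is not a routine verification, and the mechanism you propose for it does not work. The dangerous configuration is exactly the one the paper isolates: $\deg(v)=3$ with a third path $H$ hanging at $v$, so that $T^{\#}$ is a spider (a subdivision of $K_{1,3}$) with center $v$ and legs of lengths $k,k,m$. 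Your plan is to ``absorb the pendant into $H_1$ or $H_2$'' via the Hanging Path Lemma and conclude a strictly smaller spectral sum. But that operation only controls $\lambda_1$; for $\lambda_2$ it goes the wrong way. Concretely, absorbing the third leg turns the spider into $P_n$ with $n=2k+m+1$, and
\[
\lambda_2(P_n)=2\cos\!\Big(\tfrac{2\pi}{n+1}\Big)>2\cos\!\Big(\tfrac{\pi}{k+1}\Big)=\lambda_2^{\#}
\]
since $n+1=2k+m+2>2(k+1)$. So $\lambda_1$ decreases while $\lambda_2$ increases, and the sign of the change in $\lambda_1+\lambda_2$ is not determined by the lemmas you cite. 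This is precisely why the statement fails for small $n$ (the paper needs $n\ge 16$), so no purely local or monotonicity argument can close this case.

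The paper's route through this obstacle is different in kind: it first uses the Hanging Path Lemma only to normalize (forcing $\deg(a)=\deg(b)=2$ and the third branch to be a path, where $\lambda_2$ provably stays fixed because $\lambda_2(T')=\lambda_1(T'-v)$), and then kills the surviving spider configuration by exhibiting, for $n\ge 60$, an induced subtree (a long path with one extra leaf near its middle) whose spectral sum already exceeds $4>\lambda_1(P_n)+\lambda_2(P_n)$, with a computer check covering $19\le n<60$. You would need to import this quantitative argument (or something equivalent) to make your step 2 go through; as written, the proposal has a genuine gap at its central point.
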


\begin{proof}

In light of Claim \ref{claim:H_i_is_a_path}, every vertex in $H_1$ (resp. $H_2$), with the possible exception of vertex $a$ (resp. $b$), has degree at most 2. Suppose $\deg(a)=3$. Let $N(a) = \{u_1, u_2, v\}$ and $w_i$ be the terminal vertices of the path starting at $a$ and containing $u_i$ for $i=1,2$. Define $T' = T^{\#} - au_2 + u_2w_1$. By Hanging Path Lemma \ref{lemma:hanging_path}, we have $\lambda_1(T')<\lambda_1^{\#}$. Also, by the Interlacing Theorem, we have 
\[ \lambda_2(T') \le \lambda_1(T'-v) = \lambda_2^{\#}.\]
We see that $T'$ has a smaller spectral sum than $T^{\#}$, a contradiction. We conclude that $\deg(a)=2$, and similarly $\deg(b)=2$.

To show that $T^{\#}$ is a path, we only need to argue that $\deg(v)=2$. Suppose, to the contrary, that $\deg(v)=3$. Note that  $\lambda_1(H_1)=\lambda_1(H_2)$ and both $H_1$ and $H_2$ are paths, which implies that $|H_1|=|H_2|$. Let $H$ denote the subtree $T^{\#}[S^0(y)]$ rooted at $v$. If $H$ is not a path, then define $T$ to be the tree obtained from $T^{\#}$ by deleting the vertices in $S^0(y)\backslash\{v\}$ and attaching a path at $v$ so that $|T| = |T^{\#}|$. Using the Hanging Path Lemma, we see that $\lambda_1(T)<\lambda_1(T^{\#})$. Clearly, $\lambda_2(T) = \lambda_1(T-v) = \lambda_2^{\#}$. Thus, $T$ has smaller spectral sum, a contradiction. Thus, $H$ is a path, i.e., $T^{\#}$ is a subdivision of $K_{1,3}$ with central vertex $v$. We have $\lambda_1(H_i)\ge \lambda_1(H-v)$, which implies $|H_i|\ge |H-v|$ for $i=1,2$. 

If $n\ge 60$, then $|H_i|\ge 20$. Let $T$ denote the tree obtained from $P_{27}=v_1\ldots v_{27}$ by attaching a leaf at the vertex $v_{14}$. Clearly, $T$ is an induced subgraph of $T^{\#}$, and so $\lambda_1^{\#} + \lambda_2^{\#}\ge \lambda_1(T) + \lambda_2(T)>4$,
a contradiction. 

When $19\le n < 60$, we verify the assertion computationally by considering all subdivisions of $K_{1,3}$ on $n$ vertices (see \href{https://github.com/Shivaramkratos/Codes_misc/blob/main/Spec_sum_minimize_check.sage}{here} for the code).
\end{proof}

\begin{claim}
If $T^{\#}$ has a spectral edge $ab$, then $T^{\#}$ is a path.
\end{claim}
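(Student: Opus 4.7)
The plan is to mirror the preceding claim. By Claim \ref{claim:H_i_is_a_path}, both $H_1$ and $H_2$ are paths, and by Theorem \ref{thm:spectral_center}(ii), $T^{\#} = (H_1, a) \circ (H_2, b)$, so $T^{\#}$ consists of the two paths joined only by the edge $ab$. Consequently, $T^{\#}$ is a path precisely when both $a$ and $b$ are endpoints of their respective paths. Assume for contradiction that this fails. Since $\Delta(T^{\#}) \le 3$, we must have $\deg(a) = 3$ or $\deg(b) = 3$; WLOG $\deg(a) = 3$, so $a$ is internal in $H_1$. Then $T^{\#}$ contains an induced subdivision of $K_{1,3}$ centered at $a$ with three arms: two arms in $H_1$ (of lengths $r_1 \ge r_2 \ge 1$) and one arm running through $b$ into $H_2$ (of length $|H_2|$).

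A first instinct is to swap arms at $a$: setting $T' = T^{\#} - au_2 + w_1 u_2$ (notation as in the proof of Claim \ref{claim:H_i_is_a_path}) one gets $\lambda_1(T') < \lambda_1^{\#}$ via the Hanging Path Lemma. Expanding $\Phi(T^{\#}, x)$ along $ab$ using Lemma \ref{lemma:char_edge} and the path identity $\Phi(P_{k+\ell}, x) = \Phi(P_k, x)\Phi(P_\ell, x) - \Phi(P_{k-1}, x)\Phi(P_{\ell-1}, x)$ gives
\[
\Phi(T', x) - \Phi(T^{\#}, x) = \Phi(H_2 - b, x)\,\Phi(P_{r_1 - 1}, x)\,\Phi(P_{r_2 - 1}, x).
\]
At $x = \lambda_2^{\#}$, the spectral-edge inequalities $\lambda_2^{\#} > \lambda_1(H_1 - a), \lambda_1(H_2 - b)$ make all three factors positive, so $\Phi(T', \lambda_2^{\#}) > 0$; combined with $\lambda_2^{\#} < \lambda_1(H_1') \le \lambda_1(T')$, this forces $\lambda_2(T') \ge \lambda_2^{\#}$. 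Hence the local swap alone does not decrease the spectral sum, which is the principal obstacle.

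The resolution is the induced-subgraph strategy of the preceding claim. For $n$ sufficiently large, at least two of the three arms at $a$ have length $\ge 13$. Selecting a $13$-vertex subpath from each of the two longest arms and gluing at $a$ produces an induced $P_{27}$ with $a$ as its middle vertex $v_{14}$; adding one neighbor of $a$ from the third arm then realizes $T_0 = P_{27}$ with a leaf at $v_{14}$ as an induced subtree of $T^{\#}$. Since $\lambda_1(T_0) + \lambda_2(T_0) > 4$ (established in the preceding claim), Cauchy interlacing gives $\lambda_1^{\#} + \lambda_2^{\#} \ge \lambda_1(T_0) + \lambda_2(T_0) > 4$, contradicting \eqref{eq:path_min_spectralsum}. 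The residual range (the moderate values of $n$, and the configurations where at most one arm has length $\ge 13$, i.e., near-paths with at most two short branches) is handled by direct computer enumeration, exactly as for the spectral-vertex case. The ``H-shape'' subcase (both $a$ and $b$ of degree $3$) proceeds analogously with an adjusted choice of induced $T_0$.
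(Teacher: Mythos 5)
Your overall strategy coincides with the paper's: exhibit an induced subtree (a long path with a pendant leaf) whose spectral sum exceeds $4$, invoke interlacing against the bound \eqref{eq:path_min_spectralsum}, and defer small $n$ to computation. The digression explaining why the arm-swap of Claim \ref{claim:H_i_is_a_path} fails in the spectral-edge case is correct but unnecessary, since neither you nor the paper ultimately uses that swap here. The genuine gap is your unproved assertion that \emph{for $n$ large, at least two of the three arms at $a$ have length at least $13$}. This is not automatic: a priori $T^{\#}$ could be a long path with $a$ near one end and a short $H_2$ hanging off $a$ through $b$, in which case $a$ has only one long arm and your induced ``$P_{27}$ with a leaf at its midpoint'' does not exist. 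What excludes such configurations is precisely the spectral-edge inequalities of Theorem \ref{thm:spectral_center}$(ii)$: writing $k$ (resp.\ $\ell$) for the order of the longest path from $a$ in $H_1$ (resp.\ from $b$ in $H_2$), one has $\lambda_1(P_{2k-1})\ge\lambda_1(H_1)>\lambda_1(H_2-b)\ge\lambda_1(P_{\ell-1})$, hence $k\ge \ell/2$, and then $n\le 2k+2\ell\le 6k$ forces both the longest $H_1$-arm at $a$ and the arm through $b$ to have length $\Omega(n)$. Your proposal never invokes these inequalities, and your fallback of handling ``configurations where at most one arm has length $\ge 13$'' by direct computer enumeration cannot work: over unbounded $n$ that is an infinite family, so it must be ruled out structurally as above. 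Once that step is supplied, your argument essentially reproduces the paper's, which uses the sharper arm bounds (split according to the degrees of $a$ and $b$) to get by with $P_{30}$ plus a leaf at $v_{11}$, resp.\ $P_{15}$ plus two leaves, and a computer check only for $19\le n<60$.
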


\begin{proof}
In light of Claim \ref{claim:H_i_is_a_path}, the vertices of degree 3 in $T^{\#}$ belong to the set $\{a,b\}$.
Now, denote by $k$ (resp. $\ell$) the order of the longest path starting from $a$ (resp. $b$) and not containing $b$ (resp. $a$). Without loss of generality, let $\ell \ge k$. Since $H_1$ and $H_2$ are paths by Claim \ref{claim:H_i_is_a_path}, we have 
\[\lambda_1(P_{2k-1})\ge \lambda_1(H_1) > \lambda_1(H_2 - b)\ge \lambda_1(P_{\ell-1}),\]
which implies $k\ge \frac{\ell}{2}$. 

Assume $n\ge 60$. Then $n \le 2k+ 2\ell \le 6k$, which implies $k\ge 10$. If $\deg(a)=\deg(b)=3$, then the tree $T$ obtained from the path $P_{15} = v_1\ldots v_{15}$ by attaching a leaf at vertex $v_8$ and $v_9$ is an induced subgraph of $T^{\#}$. Since $T$ has spectral sum more than 4 (checked using SageMath), we get a contradiction. If $\deg(a)=3$ and $\deg(b)=2$, then $60\le 2k+\ell$, which gives $k\ge 15$ and $\ell \ge 20$. Similarly, if $\deg(a)=2$ and $\deg(b)=3$, then $60\le k+2\ell$, which gives $k\ge 12$ and $\ell \ge 20$. In either case, the tree $T$ obtained from the path $P_{30} = v_1\ldots v_{30}$ by attaching a leaf at vertex $v_{11}$ is an induced subgraph of $T^{\#}$. Again, $T$ has spectral sum greater than 4 (checked using SageMath), which leads to a contradiction.

When $19\le n < 60$, then the claim is verified computationally. By the above discussion, if $T^{\#}$ is not a path, then it is a subdivision of $K_{1,3}$ or $DC(2,2,2)$ (where only the pendant edges are subdivided). See \href{https://github.com/Shivaramkratos/Codes_misc/blob/main/Spec_sum_minimize_check.sage}{here} for the code we use to rule out these possibilities. 
\end{proof}

This completes the proof of Theorem \ref{thm:spectral_sum_min_trees}. 

\section{Concluding remarks}
\label{section:conclusion}

In this paper, we investigated the spectral sum and the convex combination of $\lambda_1$ and $\lambda_2$ of trees. However, similar questions for graphs remain open; for instance, the following conjecture by Ebrahimi, Mohar, Nikiforov, and Ahmady \cite{Ebrahimi_Mohar_Nikiforov_Ahmady_2008}. 

\begin{conjecture}[\cite{Ebrahimi_Mohar_Nikiforov_Ahmady_2008}, cf. Conjecture 17 in \cite{Aouchiche_Hansen_2010}]\label{conj:spectral_sum} For any graph $G$ of order $n$, we have
    \[ \lambda_1(G)+\lambda_2(G)\le \frac{8n}{7}.\] 
\end{conjecture}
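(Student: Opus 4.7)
The plan is to approach this conjecture through an extremal analysis driven by the variational form of $\lambda_1+\lambda_2$ given by Lemma~\ref{lemma:spectral_sum}. Let $G^{\star}$ denote a graph on $n$ vertices that maximizes $\lambda_1+\lambda_2$, and fix unit orthogonal eigenvectors $x,y$ of $A(G^{\star})$ attaining
\[
  \lambda_1(G^{\star})+\lambda_2(G^{\star}) \;=\; x^{T}\!A(G^{\star})x+y^{T}\!A(G^{\star})y \;=\; 2\sum_{uv\in E(G^{\star})}\bigl(x_ux_v+y_uy_v\bigr).
\]
The first step is a first-order variation argument: substituting $x,y$ into the quadratic form of $G^{\star}+uv$ for a non-edge $uv$, Lemma~\ref{lemma:spectral_sum} gives
$\lambda_1(G^{\star}+uv)+\lambda_2(G^{\star}+uv)\ge\lambda_1(G^{\star})+\lambda_2(G^{\star})+2(x_ux_v+y_uy_v)$,
so extremality forces $x_ux_v+y_uy_v\le 0$ for every non-edge. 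Symmetrically, for every edge one must have $x_ux_v+y_uy_v\ge 0$. Hence, up to ties, the edge set of $G^{\star}$ is exactly $\{uv : \langle p_u,p_v\rangle>0\}$, where $p_v:=(x_v,y_v)\in\mathbb{R}^{2}$.

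The second step is to exploit this two-dimensional sign structure. Thinking of each vertex as a point on the plane, the adjacency relation of $G^{\star}$ is determined entirely by whether pairs of vectors $p_u,p_v$ subtend an acute or obtuse angle. Such ``halfspace'' intersection graphs are highly structured: grouping vertices by the angular position of $p_v$ on the unit circle (with radial magnitudes absorbed into the inner product), one expects the extremum to concentrate on a bounded number of angular directions, reducing $G^{\star}$ to a blow-up of a small template graph $H$ on $k$ vertices with multiplicities $n_1,\ldots,n_k$ summing to $n$. The constant $\tfrac{8}{7}$ strongly suggests that the optimal template has $k=7$ vertices, with a specific rational distribution of the $n_i$. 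Within each cluster one further argues that a clique must be used, since $\lambda_1+\lambda_2$ on a clique of size $m$ equals $m-2$, saturating the trivial trace bound.

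The third step is to solve the resulting finite-dimensional optimization: for the blow-up of a fixed template $H$, the eigenvalues $\lambda_1,\lambda_2$ of $G^{\star}$ coincide with those of a weighted $k\times k$ quotient matrix depending on $H$ and the $n_i$, so $\lambda_1+\lambda_2$ becomes an explicit rational function of $k$ continuous parameters. The claim is that the optimum over all admissible templates $H$ of size $\le 7$ and all weight vectors $(n_1,\ldots,n_k)$ equals exactly $\tfrac{8n}{7}$, attained by one distinguished configuration.

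The main obstacle is bridging steps one and two: the sign characterization $E(G^{\star})=\{uv:\langle p_u,p_v\rangle>0\}$ does not by itself force the embedding $\{p_v\}$ to concentrate at finitely many angular directions; in principle $p_v$ could be spread continuously on the unit circle, so ruling out such configurations requires a second-order analysis coupling the eigenvalue equations $A(G^{\star})x=\lambda_1 x$ and $A(G^{\star})y=\lambda_2 y$ back to the halfspace partition. Moreover, the constant $\tfrac{8}{7}$ sits exactly at the extremum of the finite-dimensional optimization in step three, leaving \emph{no} slack in the argument; any loss in passing to the reduced problem immediately produces a weaker bound of the form $\lambda_1+\lambda_2\le c\,n$ with $c>\tfrac{8}{7}$. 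This is precisely where every past attempt has been forced to concede weaker constants, and it is the fundamental reason the conjecture has remained open despite considerable effort.
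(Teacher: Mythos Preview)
This statement is labelled a \emph{conjecture} in the paper, and the paper does not attempt to prove it; it is simply recorded in the concluding remarks as an open problem from \cite{Ebrahimi_Mohar_Nikiforov_Ahmady_2008}. So there is no ``paper's own proof'' to compare against.

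Your proposal is not a proof either, and you say so yourself in the final paragraph. The first step (the sign characterization $x_ux_v+y_uy_v\ge 0$ on edges and $\le 0$ on non-edges for an extremal graph) is a correct and standard observation following from Lemma~\ref{lemma:spectral_sum}. But the rest is a wish list, not an argument: you ``expect'' the planar point configuration $\{p_v\}$ to concentrate on finitely many directions, you ``suggest'' the template has seven vertices, and you ``claim'' the finite-dimensional optimum equals $\tfrac{8n}{7}$, without carrying out any of these steps. The obstacle you name in the last paragraph --- that the sign condition alone does not force a finite template, and that any slack destroys the constant $\tfrac{8}{7}$ --- is exactly the gap that makes this a conjecture rather than a theorem. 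Nothing in your outline closes that gap, so what you have written is a plausible plan of attack, not a proof.
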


We also believe that the ideas used in this paper may be useful in proving Conjecture \ref{conj:spectral_gap_trees}. There is an analogous conjecture for the spectral gap of graphs by Stani\'{c} \cite{Stanic_2013} which is wide open.

Finally, it is a natural problem to investigate the quantity $\min_{T\in \mathcal{T}(n)} \Psi(T, \alpha)$ for $\alpha \in [0,1]$. The answer is known for $\alpha = 0$, $\alpha = 1$ (Theorem \ref{thm:lambda_one_max_trees}), and $\alpha = \frac{1}{2}$ (Theorem \ref{thm:spectral_sum_min_trees}). We pose the following problem.

\begin{problem} Suppose $n$ is sufficiently large. Let $T\in \mathcal{T}(n)$ be such that 
\[\Psi(T, \alpha) = \min_{T\in \mathcal{T}(n)} \Psi(T, \alpha).\] 
Is it true that 
\begin{enumerate}[$(i)$]
    \item $T$ is a path if $\alpha > \frac{1}{2}$, and
    \item $T$ has at most one vertex of degree more than 2 if $\alpha < \frac{1}{2}$?
\end{enumerate}
\end{problem}

\section*{Acknowledgements}

Supported in part by the NSERC Discovery Grant R832714 (Canada), and in part by the ERC Synergy grant KARST (European Union, ERC, KARST, project number 101071836).

\bibliographystyle{plain}
\bibliography{references.bib}

\vspace{0.4cm}

\affl{Hitesh Kumar}{hitesh.kumar.math@gmail.com, hitesh\_kumar@sfu.ca}{Department of Mathematics, Simon Fraser University, Burnaby, Canada}

\affl{Bojan Mohar}{mohar@sfu.ca}{Department of Mathematics, Simon Fraser University, Burnaby, Canada\\On leave from FMF, Department of Mathematics, University of Ljubljana.}
 
\affl{Shivaramakrishna Pragada}{shivaramakrishna\_pragada@sfu.ca}{Department of Mathematics, Simon Fraser University, Burnaby, Canada}

\affl{Hanmeng Zhan}{hzhan@wpi.edu}{Computer Science Department, Worcester Polytechnic Institute, Worcester, MA 01609, United States}

\end{document}